\DeclareMathOperator{\rank}{rank}
\newtheorem{theorem}{Theorem}[section]
\newtheorem{lemma}[theorem]{Lemma}
\newtheorem{proposition}[theorem]{Proposition}
\newtheorem{definition}[theorem]{Definition}
\newtheorem{example}[theorem]{Example}
\newtheorem{remark}[theorem]{Remark}
\newtheorem{assumption}{Assumption}
\newtheorem{corollary}[theorem]{Corollary}
\providecommand{\keywords}[1]{\textbf{\textbf{Keywords. }} #1}
\pgfplotsset{compat=1.18}
\DeclareMathOperator{\Jac}{Jac}
\title{The gradient's limit of a definable family of functions admits a variational stratification}
\author{Sholom Schechtman\thanks{SAMOVAR, Télécom SudParis, Institut Polytechnique de Paris, 91120 Palaiseau, France}}
\date{\today}
\begin{document}

\maketitle

\begin{abstract}
  It is well-known that the convergence of a family of smooth functions does not imply the convergence of its gradients. In this work, we show that if the family is definable in an o-minimal structure (for instance semialgebraic, subanalytic, or any composition of the previous with exp, log), then the gradient's limit admits a variational stratification and, under a uniform Lipschitz continuity assumption, is a conservative set-valued field in the sense introduced by Bolte and Pauwels. Immediate implications of this result on convergence guarantees of smoothing methods are discussed. The result is established in a general form, where the functions in the original family might be non Lipschitz continuous, be vector-valued and the gradients are replaced by their Clarke Jacobians or an arbitrary mapping satisfying a definable variational stratification. In passing, we investigate stability properties of definable variational stratifications and smoothing methods that produce definable functions which might be of independent interest.
\end{abstract}

\keywords{ conservative mappings, Clarke subgradient, stratifications, semialgebraic, nonsmooth optimization, smoothing methods}

\section{Introduction}

In this work, given a family of real-valued functions $(f_a)_{a >0}$ that converges, when $a \rightarrow 0$, to some function $F: \bbR^d \rightarrow \bbR$, we are interested in the behavior of the limit of the corresponding family of (sub)-gradients $(\partial f_a)_{a >0}$. Formally, we are interested in properties satisfied by
\begin{equation}\label{eq:def_cons_intro}
  D_F(x) := \{v \in \bbR^d: \textrm{ there is }\textrm{$(x_n,v_n,a_n) \rightarrow (x,v,0)$}\textrm{ with $v_n \in \partial f_a(x_n)$} \}\, .
\end{equation}

While characterizing such limits is interesting from a purely theoretical perspective, $D_F$ naturally appears in the analysis of smoothing methods. In these, given a nonsmooth and perhaps not even locally Lipschitz continuous function $F$, the general construction, dating back to at least Mayne and Polak (\cite{mayne1984nondifferential}), goes as follows (see e.g. \cite{chen2012smoothing}). \emph{i)} First, construct $(f_a)_{a >0}$ a family of approximations of $F$ such that every $f_a$ is smooth. \emph{ii)} Second, for fixed $a_k, \varepsilon_k>0$, find $x_k \in \bbR^d$ an approximate stationary point: $\norm{\nabla f_{a_k}(x_k)} \leq \varepsilon_k$. \emph{iii)} Finally, decrease $a_k, \varepsilon_k$ and go back to step \emph{ii)}.
The interest of this procedure lies in the fact that we replace the original nonsmooth problem with a sequence of smooth optimization problems, for which there exists an abundance of algorithms with theoretical convergence guarantees (\cite{NoceWrig06}).

In the case of a smoothing method, and assuming that $(x_k)$ has an accumulation point $x^*$, we immediately obtain that $x^*$ is $D_F$-critical: $0 \in D_F(x^*)$. Thus, ideally, one would wish $D_F$ to be reduced to one of the common first-order operators: to the gradient if $F$ is differentiable, to the convex subgradient if it is convex or to the Frechet or Clarke subgradient if $F$ is merely continuous. Famously, Attouch in \cite{attouch2006convergence} has shown that when every function in $(f_a)_{a >0}$ is convex, $D_F$ is indeed the convex subgradient. This result was extended in numerous ways, example given: for Banach spaces (\cite{attouch1993convergence}), weakly-convex functions (\cite{poliquin1992extension,levy1995partial}), and equi-lower semidifferentiable functions (\cite{zolezzi1985continuity,zolezzi1994convergence, czarnecki2006approximation}).

Regrettably, the following simple example shows that there is no hope to state an equivalent result in full generality. Let $f_a(x):= a \sin(x/a)$, then $f_a'(x) = \sin(x/a)$ and for any $x \neq 0$, $D_F(x) = [-1, 1]$, which is obviously larger than any (sub)-gradient of $F \equiv 0$. While unfortunate, this counter-example shows that the convergence's failure is due to a highly oscillatory behavior, of $f(x,a) = f_a(x)$, as $a \rightarrow 0$. Thus, naturally, one might wonder what could happen if we restrict $f$, as a function of \emph{both $x$ and $a$}, to a class of functions where such an oscillation phenomena does not occur.

Fortunately, the nonsmooth optimization literature (\cite{bolte2007clarke,iof08,dav-dru-kak-lee-19}) have already established a setting, where precisely this pathological behavior is ruled out: the class of functions \emph{definable in an o-minimal structure} (\cite{van1998tame,cos02}). The class of such functions is large. It includes any semialgebraic function, any analytic function restricted to a semialgebraic compact, the exponential and the logarithm. Moreover, definability is stable by most of common operators such as $\{+, -, \times, \circ, \circ^{-1}, \sup, \inf\}$, explaining their ubiquity in optimization.

Definable functions may be nonsmooth, nevertheless, their differentiability properties are relatively well-understood. For instance, it is well-known that the domain of a definable function $F$ can be partitioned (or more precisely \emph{stratified}) into manifolds (or \emph{strata}) such that restricted to each element of the partition the function is smooth (see \cite{van1998tame}). Furthermore, in the seminal work \cite{bolte2007clarke}, it was established that the pair $(F, \partial F)$, with $\partial F$ denoting the Clarke subgradient, admits the so-called \emph{variational stratification}: the partition can be chosen in a way ensuring that the projection of the Clarke subgradient onto the tangent plane of the corresponding manifold (on which the objective is smooth) is simply the Riemannian gradient of the smooth restriction.

This geometric description turned out to be a fruitful point of view in recent advances in nonsmooth optimization. It is, for instance, a central tool for establishing convergence of the (Clarke) subgradient method  (\cite{dav-dru-kak-lee-19}). Importantly, it is also closely related to the recently introduced by Bolte and Pauwels (\cite{bolte2021conservative}) notion of a \emph{conservative set-valued field}. The latter, is a ``differential-akin'' object, which, roughly speaking, acts as a directional derivative along any smooth curve. For definable functions, examples of conservative fields are the Clarke subgradient, but also the output of automatic differentiation of the loss function of a neural network through the application of the backpropagation algorithm. In fact, as shown in \cite{lewis2021structure, pauwels2023conservative,davis2022conservative}, in the definable setting, under mild regularity properties (such as being locally bounded and non empty-valued), any mapping that satisfies a variational stratification \emph{is} a conservative set-valued field.

At this point, we are ready to state our main result. 
\begin{theorem}[Particular case of Theorem~\ref{thm:main}]
  Let the function $f: x,a \mapsto f_a(x)$ be definable and, for each $a>0$, $f_a$ be continuous. Let $\partial f_a$ in~\eqref{eq:def_cons_intro} denote the Clarke subgradient of $f_a$ and assume that for all $x$, $\lim_{(x',a)  \rightarrow (x,0)}f_a(x') = F(x)$. Then, the pair $(F,D_F)$ admits a variational stratification.\footnote{As alluded before, the pair $(F,D_F)$ admits a $C^p$ variational stratification, for $p\geq 1$, if there is a finite partition of $\bbR^d$ into manifolds $(\cX_i)$ such that \emph{i)} $F$, restricted to any $\cX_i$, is $C^p$ \emph{ii)} and for any $x \in \cX_i$, $D_F(x) \subset \nabla_{\cX_i} F(x) + \cN_{\cX_i}(x)$, with $\nabla_{\cX_i} F(x)$ denoting the Riemannian gradient of $F_{|\cX_i}$ and $\cN_{\cX_i}(x)$ the normal plane of $\cX_i$ at $x$ (see Section~\ref{sec:var_strat}).}
\end{theorem}
Let us comment on some immediate implications of this result. First, the presented theorem has a very general form where, except definability, we do not impose any regularity assumptions on $F$ and $D_F$. This allows us to treat the important case where the function $F$ is not locally Lipschitz continuous, since, formally, for such $F$ the concept of a conservative set-valued field is not defined. However, as soon as $F$ is locally Lipschitz and $D_F$ satisfies the regularity properties of a conservative set-valued field (that is, it is locally bounded, closed and non empty-valued), we immediately obtain that $D_F$ is a set-valued conservative field in the sense of \cite{bolte2021conservative}. Second, $D_F$ might contain elements that are not (sub)-gradients of $F$. Nevertheless, from the variational stratification property of $D_F$ we immediately obtain that $D_F(x) \subset\partial F(x) =  \{ \nabla F(x)\}$ on an open dense set. In the vocabulary of smoothing methods this means that the \emph{gradient consistency} property holds almost everywhere (see \cite{chen2012smoothing}). Third, even for points where $D_F(x) \neq \partial F(x)$, the variational stratification gives a pleasant geometric description of $D_F$. Finally, due to the remarkable stability of definable functions, as soon as $F$ is definable, a large class of smoothing methods, such as the smoothings of finite max-functions proposed in \cite{chen2012smoothing}, produce a family $(f_a)_{a >0}$ which is indeed definable in the same as $F$ o-minimal structure. Thus, for such methods, the guarantee that $0 \in D_F(x^*)$ is a meaningful and necessary condition of optimality.

Let us also mention that our result is established in a more general setting. In fact, in the construction of $D_F$ in~\eqref{eq:def_cons_intro} the Clarke subgradients $\partial f_{a}$ can be replaced by a definable conservative field of $f_a$ or more generally by a mapping that satisfies the definable variation stratification property. Furthermore, similar conclusions hold when the functions are vector-valued, with $\partial f_{a} $ being replaced by the Clarke Jacobian or an arbitrary mapping that satisfies the definable variational stratification property. Finally, our main theorem is a consequence of more general results on the stability, by composition and graph's closure, of variational stratifications. We believe that these properties, established in the course of the paper, might be of independent interest.

\paragraph{Paper organization.} In Section~\ref{sec:prel} we introduce the necessary definitions of stratifications, o-minimal structures and conservative mappings. In Section~\ref{sec:var_strat} we investigate properties of definable variational stratifications. In particular, we establish that this notion is stable by (parametric) graph's closure. In Section~\ref{sec:def_limit} we present our main theorem. Implications of the latter for smoothing methods are discussed in Section~\ref{sec:smooth}. Here, we also present some examples of smoothing methods that produce definable functions. Section~\ref{sec:pf_prop_proj_con} and Appendix~\ref{sec:rem_pfs} are devoted to the remaining proofs.

\section{Preliminaries}\label{sec:prel}
\paragraph{Notations.} 
Given an open set $\cU \subset \bbR^d$, a $C^p$ map $g: \cU \rightarrow \bbR^m$ and $x \in \cU$, we denote $\Jac g(x) \in \bbR^{m\times d}$ the Jacobian of $g$ at $x$. Such a map is called a submersion if for every $x \in \cU$, $\Jac g(x)$ is surjective. Given $\cX \subset \bbR^d$, a map $g: \cX \rightarrow \bbR^m$ is said to be $C^p$ if for every $x \in \cX$, there $\cU$, an open neighborhood of $x$, and a $C^p$ map $\tilde{g} : \cU \rightarrow \bbR^m$ such that $\tilde{g}$ and $g$ agree on $\cX \cap \cU$. We will call $\tilde{g}$ a $C^p$ smooth representative of $g$ around $x$. For a map $g: \cX \rightarrow \bbR^m$, we define its graph as $\Graph g:= \{(x, g(x)) :x \in \cX \} \subset \bbR^{d+ m}$. For a linear map $J: \bbR^d \rightarrow \bbR^m$, we denote $\ker J$ its kernel.

We say that $D: \bbR^d \rightrightarrows \bbR^m$ is a set-valued map if for all $x \in \bbR^d$, $D(x) \subset \bbR^m$. For such map, $\Graph D := \{(x,y) : y \in D(x) \}$, and we say that $D$ is closed if $\Graph D$ is a closed subset of $\bbR^{d + m}$. It is said to be locally bounded if every $x \in \bbR^d$ admits a neighborhood $\cU \subset \bbR^d$ and $C>0$ such that $ \sup_{x \in U, v \in D(x)}\norm{v} \leq C$. For $A \subset \bbR^d$, we denote $\conv A$ its convex hull and $\overline{A}$ its closure.

\subsection{Submanifolds}

In this section we present some basic notions of differential geometry that will be used throughout the paper. 
We refer to \cite{lee2022manifolds,boumal2023intromanifolds} for a detailed introduction on these notions.
Given integers $p\geq 1$ and $d \geq k$, a set $\cM \subset \bbR^d$ is a $C^p$ submanifold of dimension $k$, if for every $x \in \cM$,
there is a neighborhood $\cU$ of $x$ and a $C^p$ submersion $g \colon \cU \rightarrow \bbR^{d-k}$, such that  $U \cap \cM = g^{-1}(\{0\})$. The tangent space at $x \in \cX$ is then $\cT_{\cM}(x) := \ker  \Jac g(x)$ and $\cN_{\cM}(x) := (\cT_{\cM}(x))^{\perp}$ is the normal space of $\cX$ at $x$.

For $\cM \subset \bbR^d$ a $C^p$ submanifold, we say that a function $f \colon \cM \rightarrow \bbR^m$ is $C^p$, if for every $x \in \cM$, there is $\tilde{f}$ a $C^p$ smooth representative of $f$ around $x$. In that case, the Riemannian\footnote{Here, the Riemannian structure on $\cM$ is implicitly induced from the usual Euclidian scalar product on $\bbR^d$.} gradient of $f$ at $x \in \cM$ is
$$
\nabla_{\cM} f(x) := P_{\cT_{\cM}(x)} \nabla \tilde{f}(x)\,,
$$
with $P_{\cT_{\cM}(x)}$ being the orthogonal projection onto $\cT_{\cM}(x)$ and $\tilde{f}$ is any smooth representative of $f$ around $x$.
Similarly, for a $C^p$ function $f = (f_1, \ldots, f_m) \colon  \cM \rightarrow \bbR^{m}$ and $x\in\cM$, we will denote 
\begin{equation*}
  \Jac_{\cM} f(x)  = [\nabla_{\cM} f_1(x), \ldots , \nabla_{\cM} f_m(x)]^{\top}  \in \bbR^{m \times d}
\end{equation*} 
its Riemannian Jacobian. Note that $\cN_{\cM}(x) \subset \ker \Jac_{\cM} f(x)$. Let us record a simple lemma.
\begin{lemma}[{\cite[Chapter 2, Problem (7)]{lee2022manifolds}}]\label{lm:tan_graph}
 Consider a $C^p$ submanifold $\cX \subset \bbR^d$ and a $C^p$ function $f: \cX \rightarrow \bbR^m$. The set $\Graph f \subset \bbR^{d+m}$ is a $C^p$ submanifold, with 
 \begin{equation*}
   \cT_{\Graph f}(x,f(x)) = \{(h, \Jac_{\cX} f(x) h) : h \in \cT_{\cX}(x)\}\, .
 \end{equation*}
\end{lemma}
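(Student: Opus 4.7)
The plan is to prove the two conclusions simultaneously by exhibiting, locally around each point of $\Graph f$, a $C^p$ submersion whose zero set is $\Graph f$, then reading off the tangent space as the kernel of its Jacobian.

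Fix $(x_0, f(x_0)) \in \Graph f$. Since $\cX$ is a $C^p$ submanifold of dimension $k$, there is a neighborhood $\cU$ of $x_0$ in $\bbR^d$ and a $C^p$ submersion $g : \cU \to \bbR^{d-k}$ with $\cX \cap \cU = g^{-1}(\{0\})$. By the definition of $C^p$ maps on $\cX$, after possibly shrinking $\cU$, there is a $C^p$ smooth representative $\tilde{f} : \cU \to \bbR^m$ of $f$ agreeing with $f$ on $\cX \cap \cU$. I then define
\begin{equation*}
G : \cU \times \bbR^m \to \bbR^{d-k} \times \bbR^m, \qquad G(x,y) := (g(x),\, y - \tilde{f}(x)).
\end{equation*}
Clearly $G$ is $C^p$ and $G^{-1}(\{0\}) = \{(x,\tilde{f}(x)) : x \in \cU,\ g(x)=0\} = \Graph f \cap (\cU \times \bbR^m)$. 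The Jacobian of $G$ at $(x,y)$ is the block matrix with rows $(\Jac g(x), 0)$ and $(-\Jac \tilde{f}(x), I_m)$; the identity block together with the surjectivity of $\Jac g(x)$ immediately makes $\Jac G(x,y)$ surjective, so $G$ is a submersion. Hence $\Graph f$ is a $C^p$ submanifold, of dimension $(d+m)-(d-k+m)=k$.

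For the tangent space, I compute $\cT_{\Graph f}(x,f(x)) = \ker \Jac G(x,f(x))$: a pair $(h,w)\in \bbR^d \times \bbR^m$ lies in this kernel iff $\Jac g(x)h = 0$ and $w = \Jac \tilde{f}(x)\,h$. The first condition is precisely $h \in \cT_{\cX}(x)$. It remains to identify $\Jac \tilde{f}(x)\, h$ with $\Jac_{\cX} f(x)\, h$ when $h \in \cT_{\cX}(x)$: component-wise, for each $i$ one has $\langle \nabla_{\cX} f_i(x), h\rangle = \langle P_{\cT_{\cX}(x)} \nabla \tilde{f}_i(x), h\rangle = \langle \nabla \tilde{f}_i(x), h\rangle$ because $h \in \cT_{\cX}(x)$ and $P_{\cT_{\cX}(x)}$ is self-adjoint with $P_{\cT_{\cX}(x)}h = h$. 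Assembling the rows yields $\Jac_{\cX} f(x)\,h = \Jac \tilde{f}(x)\,h$, which gives the claimed description of the tangent space.

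No step is truly hard: the only subtle point is confirming that the formula for the tangent space does not depend on the choice of smooth representative $\tilde{f}$, which is exactly what the identity $\Jac_{\cX} f(x)h = \Jac \tilde{f}(x)h$ on $\cT_{\cX}(x)$ guarantees. Everything else is a routine construction of a defining submersion, plus a kernel computation via the block-triangular structure of $\Jac G$.
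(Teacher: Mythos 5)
Your proof is correct: the defining submersion $G(x,y)=(g(x),\,y-\tilde f(x))$ is exactly the standard construction, the block-triangular Jacobian argument for surjectivity and the kernel computation are sound, and you correctly reconcile $\Jac\tilde f(x)h$ with $\Jac_{\cX}f(x)h$ on $\cT_{\cX}(x)$ via self-adjointness of the projection. The paper itself gives no proof — it cites this as an exercise from Lee's book — so there is no alternative argument to compare against; yours is the expected one.
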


Sometimes, it will be convenient to take the dual point of view: for a $C^p$ function $f: \cM \rightarrow \bbR^{m}$ and $x \in \cM$, we define the differential $\dif f(x) : \cT_{\cM}(x) \rightarrow \bbR^d$ as $\dif f(x) [h] = \Jac_{\cM} f(x) h$. We note, that if the image of $f$ is included in some manifold $\cF \subset \bbR^m$, then the image of $\dif f(x)$ is included in $\cT_{\cF}(f(x))$. Finally, the rank of $f$ at $x \in \cM$ is the rank of $\Jac_{\cM}f(x)$ (or the one of $\dif f(x)$).

\subsection{Stratifications}\label{sec:strat}

As we will see in the next section, most sets arising in optimization can be partitioned into manifolds. Such procedure is called a stratification. 
\begin{definition}
Let $A$ be a set in $\bbR^d$, a $C^p$ stratification of $A$ is a finite partition of $A$ into a family of \emph{strata} $\bbX = (\cX_i)$ such that each $\cX_i$ is a $C^p$ submanifold, and such that for any two distinct strata $\cX_i, \cX_j \in \bbX$,
\begin{equation*}
  \cX_i \cap \overline{\cX}_j \neq \emptyset \implies \cX_i \subset \overline{\cX}_j \backslash \cX_j \, .
\end{equation*}
Given a family $\bbA = \{ A_1, \dots, A_k\}$ of subsets of $\bbR^d$, we say that a stratification $\bbX$ is \emph{compatible} with $\bbA$, if for every $\cX \in \bbX$ and $A \in \bbA$ either $\cX \subset A$ or $\cX \cap A = \emptyset$.\end{definition}

Several conditions can be imposed on the way that different strata are glued together. We will be particularly interested in \emph{Whitney-(a) stratifications}. For $d \geq 1$ and two vector spaces $E_1, E_2 \subset \bbR^d$, define the angle between $E_1, E_2$ as
\begin{equation*}
  \angle(E_1, E_2) :=\begin{cases} \sup \{ \dist(u, E_2) : u \in E_1\, , \norm{u} = 1\} \quad &\textrm{ if $E_1 \neq \{ 0\}$}\, , \\
  0 \quad &\textrm{ otherwise} \, .
  \end{cases} 
\end{equation*}
Note that $E_1 \subset E_2$ is equivalent to the fact that $\angle(E_1, E_2) = 0$.
\begin{definition}[Whitney stratification of a set]\label{def:whitn_set}
  We say that a $C^p$ stratification $\bbX = (\cX_i)$ satisfies a \emph{Whitney-(a) property}, if for every pair of distinct strata $\cX_i, \cX_j$, for each $y \in \cX_i \cap \overline{\cX_j}$ and for each $\cX_j$-valued sequence $(x_n)$ such that $x_n \rightarrow y$,
  \begin{equation*}
    \angle(\cT_{\cX_i}(y), \cT_{\cX_j}(x_n)) \rightarrow 0\, .
  \end{equation*} 
 \end{definition}
 \begin{remark}\label{rmk:tancon_whitn}
  Note that if $x_n \rightarrow y \in \cX_i$, with $(x_n) \in (\cX_i)^{\bbN}$, then $\angle(\cT_{\cX_i}(y), \cT_{\cX_i}(x_n)) \rightarrow 0$. Therefore, Definition~\ref{def:whitn_set} means that for any sequence $x_n \rightarrow y \in \cX_i$, denoting $\cX_{i(n)}$ the stratum for which $x_n \in \cX_{i(n)}$, $\angle(\cT_{\cX_i}(y), \cT_{\cX_{i(n)}}(x_n)) \rightarrow 0$.
 \end{remark}
In this note, we will refer to $\bbX$ of the previous definition as a Whitney $C^p$ stratification. 

There are also several notions of stratifications of functions.
 \begin{definition}[Whitney stratification of a function {\cite[page 502]{van96}}]\label{def:func_whit}
  Consider a closed set $A \subset \bbR^d$, a set $B \subset \bbR^m$ and a function $f: A \rightarrow B$. A Whitney $C^p$ stratification of $f$ is a pair $(\bbX, \bbF)$, where $\bbX = (\cX_i)$ (respectively $\bbF = (\cF_i)$) is a Whitney $C^p$ stratification of $A$ (respectively of $B$) such that \emph{i)} for every $\cX \in \bbX$, $f_{|\cX}$ is $C^p$, \emph{ii)} for every $\cX \in\bbX$, $f(\cX) \in \bbF$ and \emph{iii)} $f_{|\cX}$ is of constant rank.
 \end{definition}
 
For a real-valued function, we can impose an additional condition on the stratification. This notion of \emph{Thom stratification} will be central in the proof of our main theorem.
 \begin{definition}[Thom ($a_f$) stratification]
  Consider $A \subset \bbR^d$, a \emph{continuous} function $f: A \rightarrow \bbR$ and $\bbX$ a Whitney $C^p$ stratification of $A$. $\bbX$ is said to satisfy the \emph{Thom $(a_f)$ condition} if \emph{i)} for every $\cX \in \bbX$, $f_{|\cX}$ is of constant rank, and \emph{ii)} for every pair of distinct strata $\cX_i, \cX_j \in \bbX$, for each $y \in \cX_i \cap \overline{\cX_j}$ and for each $\cX_j$-valued sequence $(x_n)$ such that $x_n \rightarrow y$,
  \begin{equation*}
  \angle( \ker \dif f_{|\cX_i}(y) , \dif f_{|\cX_j}(x_n) ) \rightarrow 0 \, .
  \end{equation*}
 \end{definition}

\subsection{Functions definable in an o-minimal structure} We collect here few elementary facts about functions and sets definable in an o-minimal structure. For more details, we refer to the monographs \cite{cos02,van1998tame,van96}. A nice review of their importance in optimization is \cite{iof08}.

The definition of an o-minimal structure is inspired by properties that are satisfied by semialgebraic sets.
\begin{definition}\label{def:o-min}
  We say that $\cO:=(\cO_n)$, where for each $n \in \bbN$, $\cO_n$ is a collection of sets in $\bbR^n$, is an o-minimal structure on the real field if the following holds. \emph{i)} If $Q: \bbR^n \rightarrow \bbR$ is a polynomial, then $\{x \in \bbR^n : Q(x) = 0 \} \in \cO_n$. \emph{ii)} For each $n \in \bbN$, $\cO_n$ is a boolean algebra: if $A, B \in \cO_n$, then $A \cup B, A \cap B$ and $A^c$ are in $\cO_n$. \emph{iii)} If $A \in \cO_n$ and $B \in \cO_m$, then $A \times B \in \cO_{n+m}$. \emph{iv)}  If $A \in \cO_{n+1}$, then the projection of $A$ onto its first $n$ coordinates is in $\cO_n$. \emph{v)} Every element of $\cO_1$ is exactly a finite union of intervals and points of $\bbR$. 
\end{definition}

Sets contained in $\cO$ are called \emph{definable}. In the following, we fix an o-minimal structure $\cO$ and definable will always mean definable in $\cO$. For $B \subset \bbR^d$, a function $f: B \rightarrow \bbR^m$ is said to be definable if $\Graph f = \{(x,f(x)): x \in B \} \subset \bbR^{d+m}$ is definable, we note that this implies that $B$ is definable. Similarly, a set-valued map $D: B \rightrightarrows \bbR^m$ is said to be definable if $\Graph D = \{(x,v) : x \in B, v \in D(x)\} \subset \bbR^{d+m}$ is definable.

In this note, we will also look to extended-valued functions $f : \bbR^d \rightarrow \bbR \cup \{ + \infty\}$. The latter is definable if $\Graph f = \{ (x, f(x)): x \in \bbR^d \textrm{ and } f(x) \neq + \infty\}$ is definable. Note that $\dom f = \{x \in \bbR^d: f(x) \neq + \infty \}$ is the projection of $\Graph f$ onto the first $d$ coordinates and thus is definable by the fourth axiom of Definition~\ref{def:o-min}.

Definable sets and maps have remarkable stability
properties. For instance, if $f$ and $A$ are definable, then $f(A)$
and $f^{-1}(A)$ are definable and definability is stable by most of the common operators such as  $\{+, -, \times, \circ, \circ^{-1}\}$.

Let us look
at some examples of o-minimal structures.\\
\textbf{Semialgebraic.} Semialgebraic sets form an o-minimal structure. This follows from the celebrated result of Tarski \cite{tarski1951decision}. A set $A \subset \bbR^n$ is semialgebraic if it is a finite union of intersections of sets of the form $\{ Q(x) \leq 0\}$, where $Q : \bbR^n \rightarrow \bbR$ is some polynomial. A function is semialgebraic if its graph is a semialgebraic set. Examples of such functions include any piecewise polynomial and rational functions but also functions such as $x \mapsto x^{q}$, where $q$ is any rational number. In fact, \emph{any o-minimal structure} contains every semialgebraic set.\\
\textbf{Globally subanalytic.} There is an o-minimal structure that contains, for every $n \in \bbN$, sets of the form $\{ (x,t) : t = f(x)\}$, where $f : [-1, 1]^n \rightarrow \bbR$ is an analytic function. This comes from the fact that subanalytic sets are stable by projection, which was established by Gabrielov \cite{gabrielov1968projections, gabrielov1996complements}. The sets belonging to this structure are called globally subanalytic (see \cite{bier_semi_sub} for more details).\\
\textbf{Log-exp.} There is an o-minimal structure that contains, semialgebraic sets, globally sub-analytic sets as well as the graph of the exponential
and the logarithm (see \cite{wilkie1996model, van1994elementary}).

With these examples in mind it is usually easy to verify that a function is definable. This will be the case as soon as the function is constructed by a finite number of definable operations on definable functions. From this, we see that \emph{most of the functions} used in optimization are definable in the structure \emph{Log-exp}.

  A particularly attractive property of definable sets is that they always admit a stratification. In the following we will say that a stratification $\bbX = (\cX_i)$ is definable if every stratum $\cX$ is definable.
  
  \begin{proposition}[{\cite[Theorem 4.8]{van96}}]\label{prop:whitney_strat}
    Let $\{A_1, \dots, A_k\}$ be a family of definable sets of $\bbR^d$. For any $p \geq 1$, there is a definable Whitney $C^p$ stratification of $\bbR^d$ compatible with $\{A_1, \dots, A_k \}$.
   \end{proposition}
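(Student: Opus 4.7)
The plan is to produce the stratification in two stages: first, obtain a definable $C^p$ partition of $\bbR^d$ into submanifolds compatible with $\{A_1, \dots, A_k\}$ (via the cell decomposition theorem), and then refine it iteratively until Whitney's (a) condition holds. Stage one is essentially the $C^p$ cell decomposition theorem for o-minimal structures: one shows by induction on the ambient dimension $d$, using the axiom that projections preserve definability together with the monotonicity theorem in dimension $1$, that any finite collection of definable sets can be made compatible with a finite partition into $C^p$ cells, each of which is a $C^p$ submanifold. Moreover, the cell decomposition automatically satisfies the frontier condition $\cX_i \cap \overline{\cX_j} \neq \emptyset \implies \cX_i \subset \overline{\cX_j}$ up to a further refinement, obtained by stratifying each frontier $\overline{\cX_j} \setminus \cX_j$ (which is definable and of strictly smaller dimension than $\cX_j$) and iterating.

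The second, and harder, stage is to arrange the Whitney-(a) condition. For each ordered pair of strata $(\cX_i, \cX_j)$ with $\cX_i \subset \overline{\cX_j} \setminus \cX_j$, introduce the \emph{bad locus}
\begin{equation*}
  B_{ij} := \{ y \in \cX_i : \exists\, (x_n) \in (\cX_j)^{\bbN},\ x_n \to y,\ \angle(\cT_{\cX_i}(y), \cT_{\cX_j}(x_n)) \not\to 0 \}\,.
\end{equation*}
The key observation is that the tangent map $x \in \cX_j \mapsto \cT_{\cX_j}(x)$ is definable, because one can express it via the kernel of the Jacobian of a definable local defining submersion, and after fixing an embedding of the Grassmannian into some $\bbR^N$ (e.g.\ via orthogonal projectors) the set-valued limit of tangent spaces along $\cX_j$-sequences approaching $\cX_i$ is itself definable. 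The angle function is then definable, so $B_{ij}$ is a definable subset of $\cX_i$.

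The crux is to prove $\dim B_{ij} < \dim \cX_i$. The plan is to argue by contradiction: if $B_{ij}$ contained a definable subset of dimension $\dim \cX_i$, then by the o-minimal curve selection lemma and the generic smoothness of definable maps into the Grassmannian, one could find a definable arc along which the tangent planes of $\cX_j$ approach a limit $T$ with $\cT_{\cX_i}(y) \not\subset T$; but a further cell decomposition of $\overline{\cX_j}$ compatible with this arc would contradict the manifold structure of $\cX_i$ near generic points. Once $\dim B_{ij} < \dim \cX_i$ is established for every pair, one refines the stratification by a new compatible cell decomposition that separates each $B_{ij}$ from its complement in $\cX_i$, thereby removing the offending points from the top-dimensional part of $\cX_i$.

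Finally, iterate the refinement: at each step the total dimension of the Whitney-bad locus, summed over all pairs, strictly decreases; since the dimension is bounded by $d$ and the partitions are always finite, the process terminates after finitely many rounds, yielding a definable Whitney $C^p$ stratification of $\bbR^d$ compatible with $\{A_1, \dots, A_k\}$. The main obstacle is the dimension bound on $B_{ij}$, which requires combining the definability of the tangent bundle of a stratum with curve selection and a generic-rank argument; everything else is bookkeeping on top of the cell decomposition theorem.
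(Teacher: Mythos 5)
The paper does not prove this proposition; it is imported verbatim from van den Dries--Miller \cite[Theorem 4.8]{van96}, so there is no internal argument to compare yours against. Your two-stage plan --- a $C^p$ cell decomposition compatible with $\{A_1,\dots,A_k\}$ and the frontier condition, followed by iterated refinement along the locus where Whitney (a) fails, with termination guaranteed by a strict dimension drop --- is exactly the classical strategy, and it is the same template the paper itself describes for other stratification statements in Remark~\ref{rmk:stand_strat} (exhibit the bad locus $\bbB(\cX)$, show it is definable with $\dim \bbB(\cX) < \dim \cX$, and re-stratify). The routine parts of your outline (definability of the tangent map via orthogonal projectors onto the Grassmannian, definability of $B_{ij}$ by a first-order formula, finiteness of the refinement) are correct.

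The genuine gap is in the one step that carries all the mathematical content: the bound $\dim B_{ij} < \dim \cX_i$. Your argument for it --- that ``a further cell decomposition of $\overline{\cX_j}$ compatible with this arc would contradict the manifold structure of $\cX_i$ near generic points'' --- does not identify an actual contradiction: decomposing $\overline{\cX_j}$ compatibly with a single arc says nothing about limits of tangent planes of $\cX_j$, and curve selection only hands you one arc along which the tangent planes converge to some bad limit $T$; nothing yet relates $T$ to $\cT_{\cX_i}(y)$. The standard proof needs two geometric inputs your sketch omits: \emph{(i)} a curve lemma asserting that if $\gamma$ is a definable $C^1$ arc in $\cX_j$ with $\gamma(t) \to y$ and $\cT_{\cX_j}(\gamma(t)) \to T$, then the limit of the unit secants $(\gamma(t)-y)/\|\gamma(t)-y\|$ lies in $T$ (proved coordinatewise via the mean value theorem after a definable reparametrization); and \emph{(ii)} a wing-type construction producing, near a generic point $y$ of a putatively top-dimensional $B_{ij}$, a definable family of such arcs whose secant directions sweep out all of $\cT_{\cX_i}(y)$, which forces $\cT_{\cX_i}(y) \subset T$ and contradicts $y \in B_{ij}$. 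Without these the dimension drop is asserted rather than proved; since the paper simply cites the result, you should either do the same or supply (i) and (ii) explicitly.
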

  Similarly, a definable function always admit a Whitney stratification.
   \begin{proposition}[{\cite[Theorem 4.8]{van96}}]\label{prop:whitney_func}
    Consider a closed definable set $A \subset \bbR^d$ and a definable function $f: A \rightarrow \bbR^m$. For any family $\{A_1, \ldots, A_k \}$ of definable subsets of $\bbR^d$ and any family $\{B_1, \ldots, B_{k'} \}$ of definable subsets of $\bbR^m$, there is $(\bbX, \bbF)$ a definable Whitney $C^p$ stratification of $f$ such that $\bbX$ is compatible with $\{A_1, \ldots, A_k\}$ and $\bbF$ is compatible with $\{B_1, \ldots, B_{k'}\}$. 
   \end{proposition}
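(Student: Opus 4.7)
My plan is to obtain $(\bbX, \bbF)$ by first stratifying $\Graph f$, then pushing the result down to the factors via the coordinate projections, leveraging Proposition~\ref{prop:whitney_strat} as a black box together with the o-minimal cell decomposition theorem to control the smoothness and rank of $f$ on each stratum.

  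The first move is to stratify $\Graph f \subset \bbR^{d+m}$, which is definable because $f$ is. Applying Proposition~\ref{prop:whitney_strat} in $\bbR^{d+m}$ compatibly with $\Graph f$, each $A_i \times \bbR^m$, and each $\bbR^d \times B_j$, the strata contained in $\Graph f$ yield a definable Whitney $C^p$ stratification $\bbG$ of $\Graph f$; each $\cG \in \bbG$ is then contained in or disjoint from every $A_i \times \bbR^m$ and every $\bbR^d \times B_j$. By the o-minimal constant-rank theorem (a consequence of cell decomposition), after a further refinement of $\bbG$ I may assume that on each $\cG$ both coordinate projections $\pi_1 \colon \cG \to \bbR^d$ and $\pi_2 \colon \cG \to \bbR^m$ have constant rank. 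Since $\cG \subset \Graph f$, the projection $\pi_1$ is injective, so $\cX := \pi_1(\cG)$ is a definable $C^p$ submanifold of $A$ on which $f$ restricts to a $C^p$ map of constant rank, and $\cF := \pi_2(\cG) = f(\cX)$ is a definable $C^p$ submanifold of $\bbR^m$.

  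The resulting collections $\{\pi_1(\cG)\}_{\cG \in \bbG}$ and $\{\pi_2(\cG)\}_{\cG \in \bbG}$ partition $A$ and $f(A)$ into definable $C^p$ submanifolds, but not yet in a Whitney-regular way. I would then re-invoke Proposition~\ref{prop:whitney_strat} on $\bbR^d$ compatibly with $\{A_1, \ldots, A_k\}$ together with the pieces $\pi_1(\cG)$ to produce $\bbX$, and analogously on $\bbR^m$ compatibly with $\{B_1, \ldots, B_{k'}\}$ and the pieces $\pi_2(\cG)$ to produce $\bbF$. Provided each new stratum of $\bbX$ refines one of the $\pi_1(\cG)$, compatibility guarantees that $f$ sends $\cX$ into a single $\cF \in \bbF$ and the constant-rank property of $f_{|\cX}$ passes to the finer stratum. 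A last lift of this refined $\bbX$ back through the bijection $\cX \mapsto (\cX, f(\cX))$ gives a compatible refinement of $\bbG$, closing the loop.

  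The step I expect to be the main obstacle is the simultaneous enforcement of the three requirements: Whitney regularity of $\bbX$ and $\bbF$, constant rank of $f_{|\cX}$, and $f(\cX) \in \bbF$. Successive refinements aimed at one condition can easily break another. The clean way to handle this is an induction on dimension in the spirit of \cite[Chapter 4]{van96}: the locus where a condition fails on a given stratum is always definable and of strictly smaller dimension, so it may be peeled off and re-stratified at the next step, with termination guaranteed by the finiteness properties of definable sets. This dimension induction is what powers the whole construction.
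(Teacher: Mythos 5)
The paper does not prove this statement at all: it is imported verbatim as \cite[Theorem 4.8]{van96} (the stratification theorem for definable maps of van den Dries--Miller), so there is no in-paper argument to compare against. Your outline does track the strategy used in that reference and in the surrounding literature -- stratify the graph, use cell decomposition to get constant rank, push down through the projections, and resolve the resulting conflicts by an induction on dimension -- so as a plan it is pointed in the right direction. Note also that your intermediate conclusion (a stratification $\bbG$ of $\Graph f$ whose strata project to strata of a stratification of $A$ on which $f$ is $C^p$) is essentially the paper's Proposition~\ref{prop:whitn_graph}, which the paper \emph{derives from} the present proposition rather than the other way around; you would be reproving that step from scratch.

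Two steps in your sketch are genuine gaps rather than routine verifications. First, from ``$\pi_1|_{\cG}$ is injective and of constant rank'' you conclude that $\cX = \pi_1(\cG)$ is a $C^p$ submanifold on which $f$ is $C^p$. Injectivity plus constant rank only gives an injective immersion, and the image of an injective immersion need not be a submanifold (nor need the inverse $x \mapsto (x, f(x))$ be continuous on the image) without a further refinement argument; in the o-minimal setting this is fixable, but it is a lemma, not an observation. Second, and more seriously, the ``closing the loop'' step is the entire content of the theorem: after refining $\bbF$ to be Whitney and compatible with $\{B_1,\dots,B_{k'}\}$ and the images $\pi_2(\cG)$, a stratum $\cX$ of the refined $\bbX$ is only guaranteed to map \emph{into} a union of strata of $\bbF$, whereas Definition~\ref{def:func_whit} requires $f(\cX)$ to \emph{equal} a stratum of $\bbF$; restoring this forces a further refinement of $\bbF$ by the new images $f(\cX)$, which in turn forces a refinement of $\bbX$ by their preimages, and one must show this process terminates while preserving Whitney regularity and constant rank. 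You correctly identify dimension induction as the mechanism, but the proposal asserts rather than executes it, and the bookkeeping (which condition's failure locus has smaller dimension, and why peeling it off does not destroy the conditions already secured) is exactly where the proof of \cite[Theorem 4.8]{van96} does its work.
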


   \begin{remark}
   The fact that $A$ is closed is actually superfluous. Indeed, let $\tilde{f} : \overline{A} \rightarrow \bbR^m$ be any definable extension of $f$ to $\overline{A}$. Then, we can stratify $\tilde{f}$ with the help of the previous proposition and $\bbX$ can be taken compatible with $\{A, \overline{A} \backslash A \}$.
   \end{remark}
   If the definable function is \emph{real-valued} and \emph{continuous}, then it admits a Thom stratification.
  
   \begin{proposition}[\cite{le1997thom}]\label{prop:f_thom}
    Consider a definable set $A \subset \bbR^d$ and a definable, continuous, function $f: A \rightarrow \bbR$. There is $\bbX$ a definable Whitney $C^p$ stratification of $\bbR^d$ compatible with $A$ such that $\bbX$ satisfies the Thom $(a_f)$ condition. Moreover, $\bbX$ can be taken compatible\footnote{The result in \cite{le1997thom} is stated without compatibility conditions. However, examining its proof we see that it establishes compatibility, see Remark~\ref{rmk:stand_strat} in Section~\ref{sec:pf_prop_proj_con}.} with any finite collection of definable sets.
    \end{proposition}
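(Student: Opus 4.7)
The plan is to build the stratification in stages by iterating Proposition~\ref{prop:whitney_func} and driving down the dimension of the set where the Thom $(a_f)$ condition fails, exploiting the tameness of definable kernel fields.

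First, I would apply Proposition~\ref{prop:whitney_func} to $f$ (or to any definable continuous extension to $\overline{A}$, as in the remark after that proposition) with compatibility family enlarged to include $\{A, \overline{A}\setminus A\}$ and the prescribed collection $\{A_1,\ldots,A_k\}$. This produces an initial definable Whitney $C^p$ stratification $\bbX_0$ of $\bbR^d$ on each stratum of which $f$ is $C^p$ and of constant rank, with the required compatibility already built in. Only the angle condition on kernels of differentials remains to be enforced.

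Second, for every ordered pair of distinct strata $(\cX_i, \cX_j)\in\bbX_0\times\bbX_0$ with $\cX_i\subset\overline{\cX_j}$, I would introduce the bad locus
\begin{equation*}
B_{ij} := \bigl\{\, y\in\cX_i \;:\; \exists\,(x_n)\in(\cX_j)^{\bbN},\ x_n\to y,\ \angle\bigl(\ker\dif f_{|\cX_i}(y),\,\ker\dif f_{|\cX_j}(x_n)\bigr)\not\to 0 \,\bigr\}.
\end{equation*}
The map $x\mapsto\ker\dif f_{|\cX_j}(x)$ has a definable graph in the (definable) Grassmannian, since kernels of definable linear maps are definable, and $\angle$ is a definable function. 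Hence $B_{ij}$ is definable by Definition~\ref{def:o-min}. The key step, which I expect to be the main obstacle, is the strict dimension bound
\begin{equation*}
\dim B_{ij} < \dim\cX_i\, .
\end{equation*}
This is the genuinely tame input: for an arbitrary smooth stratification the bad locus could fill $\cX_i$, but in the o-minimal setting one argues by contradiction through definable cell decomposition and the curve selection lemma. Were $\dim B_{ij}=\dim\cX_i$, one could select a definable arc in $\cX_i$ along which the limiting kernels of $\dif f_{|\cX_j}$ vary in a definable way; using the Whitney-(a) property of $\bbX_0$ (which supplies continuity of tangent spaces) and the definable monotonicity theorem applied to the definable angle function, one reduces to a one-dimensional situation where the kernel field must, after a further refinement, extend continuously across $\cX_i$, contradicting the defining property of $B_{ij}$. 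This is essentially the content of \cite{le1997thom} transposed to the o-minimal setting.

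Third, once the dimension bound is established, I would re-apply Proposition~\ref{prop:whitney_func} to $f$ with the compatibility family enlarged to include all the finitely many $B_{ij}$ together with the user-specified sets $\{A_1,\ldots,A_k\}$. In the resulting refinement, every new stratum contained in some original $\cX_i$ is either disjoint from every $B_{ij}$ (in which case the Thom condition holds for every pair involving it) or contained in some $B_{ij}$, in which case it has dimension strictly less than $\dim\cX_i$. A Noetherian induction on the finite multiset of stratum dimensions therefore terminates after finitely many rounds, producing a Whitney $C^p$ stratification $\bbX$ compatible with the prescribed family and satisfying the Thom $(a_f)$ condition. The footnote's claim is automatic: at every invocation of Proposition~\ref{prop:whitney_func} the user-specified collection is kept in the compatibility list, and the refinement process only subdivides strata without breaking compatibility with coarser definable sets.
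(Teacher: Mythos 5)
Your overall scheme --- start from a Whitney stratification given by Proposition~\ref{prop:whitney_func} with the prescribed sets in the compatibility list, isolate the definable bad locus where the Thom condition fails, prove a strict dimension drop, refine, and iterate --- is exactly the scheme the paper attributes to \cite{le1997thom} and records in Remark~\ref{rmk:stand_strat}; in particular, your observation that the user-specified collection survives every refinement round is precisely how the paper justifies the compatibility footnote. On the architecture there is nothing to object to.

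The gap is in the one step you yourself flag as the main obstacle: the bound $\dim B_{ij} < \dim\cX_i$. That bound \emph{is} the theorem of \cite{le1997thom}; the paper does not reprove it but cites it, whereas you offer a sketch (curve selection, monotonicity of the definable angle function, reduction to dimension one, ``the kernel field extends continuously across $\cX_i$'') that does not amount to an argument. Two concrete issues. First, the Thom $(a_f)$ condition is not a continuity statement about a single kernel field but an asymptotic containment: $\angle(\ker\dif f_{|\cX_i}(y),\ker\dif f_{|\cX_j}(x_n))\to 0$ says that $\ker\dif f_{|\cX_i}(y)$ is contained in the limit of the kernels upstairs. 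Along a definable arc that limit always exists in the Grassmannian (definable choice plus monotonicity), so existence and continuity of the limit are never the issue; the issue is why the limit must \emph{contain} the lower-dimensional kernel, and nothing in your sketch addresses that. Second, the actual proof has to split according to the constant rank of $f_{|\cX_j}$ (which you rightly secured via Proposition~\ref{prop:whitney_func}): when $\rank f_{|\cX_j}=0$ the kernel is all of $\cT_{\cX_j}$ and the Thom condition reduces to Whitney-(a), but when $\rank f_{|\cX_j}=1$ a genuinely new construction (a definable wing-lemma-type argument on the level sets of $f$) is required, and this is where the work of \cite{le1997thom} lies. If you intend a self-contained proof, this step must be supplied; if, like the paper, you are content to cite \cite{le1997thom} for it, then your compatibility bookkeeping is correct and coincides with Remark~\ref{rmk:stand_strat}.
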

   For technical reasons, we record here another result on a stratification of the graph of a definable function.
   \begin{proposition}\label{prop:whitn_graph}
    Consider a definable set $A \subset \bbR^d$ and a definable function $f: A \rightarrow \bbR^m$. There is $\bbG = (\cG_i)$, a definable Whitney $C^p$ stratification of $\Graph f$, and $\bbX = (\cX_i)$, a definable Whitney $C^p$ stratification of $A$, such that for any $ \cX \in \bbX$, $f_{|\cX}$ is $C^p$ and there is $\cG \in \bbG$ such that $\Graph f_{|\cX} =\cG$. Moreover, $\bbX$ and $\bbG$ can be taken compatible with any finite collection of definable sets.
   \end{proposition}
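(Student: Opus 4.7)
The plan is to reduce everything to Proposition~\ref{prop:whitney_func} applied to the first-coordinate projection $\pi \colon \bbR^{d+m} \to \bbR^d$, $(x,y)\mapsto x$, and then to transfer the resulting stratification of the graph back to $A$ through $\pi$. Since $\Graph f$ need not be closed, I would apply the proposition to $\pi$ viewed as a definable map on the closed set $\overline{\Graph f}$, invoking the remark following Proposition~\ref{prop:whitney_func}. I would request the resulting Whitney $C^p$ stratification of $\overline{\Graph f}$ to be compatible with $\{\Graph f,\, \overline{\Graph f}\setminus \Graph f\}$ together with any additional finite collection of definable sets prescribed by the statement, and the Whitney $C^p$ stratification of $\bbR^d$ to be compatible with $\{A\}$ together with the corresponding additional collections. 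Keeping only those strata lying in $\Graph f$ (respectively in $A$) yields the candidate Whitney $C^p$ stratifications $\bbG$ of $\Graph f$ and $\bbX$ of $A$, and by Proposition~\ref{prop:whitney_func}, for every $\cG\in\bbG$, the restriction $\pi|_{\cG}$ is $C^p$ of constant rank with $\pi(\cG)\in \bbX$.

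The core step is then to show that $\pi|_{\cG}\colon \cG \to \pi(\cG)$ is a $C^p$ diffeomorphism for every $\cG \in \bbG$. Since $\Graph f$ is the graph of a function, $\pi$ is injective on $\Graph f$ and hence on each $\cG$. Combined with the constant rank property, the rank theorem forces $\rank(\pi|_{\cG})=\dim \cG$, so that $\pi|_{\cG}$ is a $C^p$ immersion and a local diffeomorphism onto $\pi(\cG)$; being also a bijection and an open map, it is a homeomorphism, and therefore a global $C^p$ diffeomorphism. Writing $\cX := \pi(\cG)$ and denoting by $\mathrm{pr}_2\colon (x,y)\mapsto y$ the second-coordinate projection, one then gets $f_{|\cX} = \mathrm{pr}_2 \circ (\pi|_{\cG})^{-1}$, which is $C^p$, and $\Graph f_{|\cX} = \cG$. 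Surjectivity of $\pi$ on $\Graph f$, together with the remark that two distinct strata of $\bbG$ with the same $\pi$-image $\cX$ would share the point $(x,f(x))$ for any $x\in \cX$, shows that $\cG \mapsto \pi(\cG)$ is a bijection between $\bbG$ and $\bbX$, so every stratum of $\bbX$ is indeed matched to a stratum of $\bbG$ in the required way.

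The step I expect to deserve most care is the passage from a local to a global diffeomorphism for $\pi|_{\cG}$: one cannot directly control a sequence in $\cG$ from the convergence of its $\pi$-image, since $\cG$ lives in $\bbR^{d+m}$ and the second coordinate could a priori escape. The clean route is the open-map argument outlined above, which uses only injectivity and the local diffeomorphism property and avoids any quantitative estimate on the second coordinate. Everything else reduces to bookkeeping in the compatibility of Whitney stratifications, plus the remark following Proposition~\ref{prop:whitney_func} to handle the possible non-closedness of $\Graph f$ and $A$.
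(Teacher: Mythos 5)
Your proposal is correct and rests on the same key tool as the paper's proof, namely Proposition~\ref{prop:whitney_func} applied to the coordinate projection $\Pi^x:\bbR^{d+m}\rightarrow\bbR^d$ so that each stratum of the graph projects onto a stratum of the base. The one place where you genuinely diverge is in how the $C^p$ smoothness of $f_{|\cX}$ is obtained. The paper sidesteps the issue entirely: it first fixes an auxiliary Whitney stratification $\bbX'$ of $A$ on which $f$ is already $C^p$ stratum by stratum, and then demands that the final $\bbX$ be compatible with $\bbX'$, so that smoothness is inherited by inclusion. You instead extract smoothness from the graph stratification itself, arguing that injectivity of $\Pi^x$ on $\Graph f$ together with the constant-rank property forces $\Pi^x_{|\cG}$ to be an injective immersion, hence (via openness and bijectivity onto $\cX=\Pi^x(\cG)$) a global $C^p$ diffeomorphism, and then writing $f_{|\cX}$ as the second-coordinate projection composed with its inverse. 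This is valid --- the rank theorem does rule out rank below $\dim\cG$ for an injective constant-rank map, and the open-map argument correctly upgrades the local diffeomorphism to a global one --- and it has the small advantage of not requiring the auxiliary stratification $\bbX'$; the cost is a longer differential-geometric verification where the paper needs only a compatibility request. Your handling of non-closedness via $\overline{\Graph f}$ and the remark after Proposition~\ref{prop:whitney_func} is also a legitimate variant of the paper's choice to stratify all of $\bbR^{d+m}$ compatibly with $\Graph f$.
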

   \begin{proof}
    In this proof, we denote $\Pi^x : \bbR^{d+m} \rightarrow \bbR^d$, the restriction onto the first $d$ coordinates.
  
    Consider $\bbX' = (\cX')$, a Whitney $C^p$ stratification of $A$, such that $f$ is $C^p$ on each $\cX' \in \bbX'$.
  Applying Proposition~\ref{prop:whitney_func} to $\Pi^x$, there is $\bbG = (\cG_i)$ a Whitney $C^p$ stratification of $\bbR^{d+m}$ compatible with $\Graph f$ and $\bbX = (\cX_i)$ a Whitney $C^p$ stratification of $\bbR^d$, compatible with $\bbX'$, such that for any $\cG \in \bbG$, there is $\cX \in \bbX$ for which $\Pi^x(\cG) = \cX$. Moreover, $\bbX$ and $\bbG$ can be taken compatible with any finite collection of definable sets. 
  Then, $\bbX,\bbG$ satisfy the claim.

   Indeed, given any $\cX \in \bbX$, with $\cX \subset A$, there is \emph{i)} $\cX' \in \bbX'$ such that $\cX \subset \cX'$, therefore $f_{|\cX}$ is $C^p$, and \emph{ii)} there is $\cG \in \bbG$ such that $\cG \subset \Graph f$ and $\Pi_x(\cG) = \cX$, which implies that $\Graph f_{|\cX} = \cG$.
   \end{proof}

\subsection{Conservative set-valued fields}\label{sec:prel_consfield}

Conservative set-valued fields were introduced by Bolte and Pauwels in \cite{bolte2021conservative} as an elegant description of automatic differentiation of the loss function of a neural network through the application of the backpropagation algorithm. Since then, several works have worked out some geometrical properties of conservative set-valued mappings of definable functions (\cite{lewis2021structure,davis2022conservative,pauwels2023conservative}). They constitute an important tool for establishing the convergence of first-order methods in nonsmooth optimization (\cite{dav-dru-kak-lee-19, bolte2021conservative,bolte2023one,bolte2024differentiating,xiao2023adam, le2023nonsmooth}).

\begin{definition}[\cite{bolte2021conservative}]\label{def:cons_f}
  We say that a locally bounded, closed set-valued map $D: \bbR^d \rightrightarrows \bbR^d$ with nonempty values is a \emph{conservative field} for a \emph{potential function} $f: \bbR^d \rightarrow \bbR$ if for any absolutely continuous curve $\sx: [0, 1] \rightarrow \bbR^d$ and any measurable function $\sv: [0,1] \rightarrow \bbR^d$, such that for all $t \in [0,1]$, $\sv(t) \in D(\sx(t))$, it holds that
  \begin{equation}\label{eq:cons_field}
   f(\sx(1)) = f(\sx(0)) +  \int_{0}^1 \scalarp{\sv(t)}{\dot{\sx}(t)} \dif t \, .
  \end{equation}
  Functions that are potentials of some conservative field are called \emph{path differentiable}.
\end{definition}

Definable functions always admit a conservative field. The most important example of one is the Clarke subgradient (this was proven in \cite{drusvyatskiy2015curves} but see also \cite{dav-dru-kak-lee-19}). Recall that for a set $A$, $\conv A$ denotes its convex hull.

\begin{definition}[the Clarke subgradient {\cite{cla-led-ste-wol-livre98}}]
  Let $f: \bbR^d \rightarrow \bbR$ be a locally Lipschitz function. The Clarke subgradient of $f$ at $x$ is defined as
  \begin{equation*}
    \partial f(x) := \conv \{ v \in \bbR^d: \textrm{ there is $x_n \rightarrow x$, with $f$ differentiable at $x_n$ and $\nabla f(x_n) \rightarrow v$}\}\, .
  \end{equation*}
\end{definition}

From the optimization perspective, note that the Clarke subgradient provides a necessary condition of optimality: if $x$ is a local minimum of $f$, then $0 \in \partial f(x)$ (\cite{cla-led-ste-wol-livre98}).
\begin{proposition}[{\cite[Corollary 1]{bolte2021conservative}}]
  Let $f:\bbR^d \rightarrow \bbR$ be a definable locally Lipschitz continuous function. Then $\partial f : \bbR^d \rightrightarrows \bbR^d$ is a conservative set-valued field for $f$. Moreover, if $D$ is any other conservative field of $f$, then so is $ x \rightrightarrows \conv D(x)$ and 
  \begin{equation*}
    \partial f(x) \subset \conv D(x)\, .
  \end{equation*}
\end{proposition}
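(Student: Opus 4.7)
My plan is to handle the three claims in sequence, using the Thom stratification machinery just established. For the first assertion, apply Proposition~\ref{prop:f_thom} to $f$ to obtain a definable Whitney $C^p$ stratification $\bbX$ of $\bbR^d$ on which every restriction $f_{|\cX}$ is smooth. A now-standard projection formula (see \cite{bolte2007clarke}) then yields the variational stratification identity $P_{\cT_{\cX}(x)} \partial f(x) = \{\nabla_{\cX} f_{|\cX}(x)\}$ for each $x \in \cX \in \bbX$. Given an absolutely continuous curve $\sx : [0,1] \to \bbR^d$ and a measurable selection $\sv(t) \in \partial f(\sx(t))$, the crucial ingredient---which I expect to be the main technical obstacle---is the \emph{tangency property}: for almost every $t$, $\dot\sx(t)$ lies in the tangent space of the stratum $\cX(t)$ containing $\sx(t)$. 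This is established through a coarea-type argument combined with Whitney-(a) regularity, implying that whenever $\sx$ visits a positive-codimension stratum $\cX$ on a set of positive measure, its derivative must be tangent to $\cX$ almost everywhere on that set. Once tangency is secured, for a.e.\ $t$,
\[
  \langle \sv(t), \dot\sx(t) \rangle
  = \langle P_{\cT_{\cX(t)}(\sx(t))} \sv(t), \dot\sx(t) \rangle
  = \langle \nabla_{\cX(t)} f_{|\cX(t)}(\sx(t)), \dot\sx(t) \rangle
  = \tfrac{\dif}{\dif t} f(\sx(t)),
\]
and integrating yields \eqref{eq:cons_field}.

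For the \emph{moreover} assertion, I first show that $\conv D$ remains conservative. Given two measurable selections $\sv_1, \sv_2$ of $D$ along $\sx$ and any measurable $A \subset [0,1]$, the map $\sv_1 \mathbf{1}_A + \sv_2 \mathbf{1}_{A^c}$ is again a measurable selection, so applying the conservation law to this map and to $\sv_1$ forces $\int_A \langle \sv_1(t) - \sv_2(t), \dot\sx(t) \rangle \dif t = 0$ for every $A$. Hence $\langle \sv(t), \dot\sx(t) \rangle$ is essentially independent of the chosen selection. A Castaing representation then writes any measurable selection $w(t) \in \conv D(\sx(t))$ as $\sum_{i=0}^d \lambda_i(t) v_i(t)$ with each $v_i$ a measurable selection of $D$ along $\sx$, yielding $\langle w(t), \dot\sx(t)\rangle = \langle \sv(t), \dot\sx(t)\rangle$ a.e.\ and hence the conservation identity for $\conv D$. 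Closedness, local boundedness, and nonemptiness of $\conv D$ follow from standard finite-dimensional convex-hull arguments.

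Finally, for the containment $\partial f(x) \subset \conv D(x)$, I argue pointwise. By Rademacher $f$ is differentiable on a full-measure set; at such a point $x$ and for any unit vector $u$, apply the conservation law to the segment $s \mapsto x + su$: for any measurable selection $v$ of $D$ along it,
\[
  \frac{f(x+tu)-f(x)}{t}
  = \frac{1}{t}\int_0^t \langle v(s), u \rangle \dif s.
\]
As $t \to 0^+$, the left side converges to $\langle \nabla f(x), u \rangle$ while closedness and local boundedness of $D$ force any subsequential limit of the right side to lie in $[\min_{w \in D(x)} \langle w, u \rangle,\ \max_{w \in D(x)} \langle w, u \rangle]$. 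Since this holds for every $u$, the support-function characterization of closed convex sets gives $\nabla f(x) \in \conv D(x)$. Taking limits $x_n \to x$ along differentiability points and convex hulls, using closedness and local boundedness of $\conv D$, yields $\partial f(x) \subset \conv D(x)$, completing the proof.
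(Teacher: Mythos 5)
The paper does not prove this proposition: it is imported verbatim as \cite[Corollary 1]{bolte2021conservative}, so there is no in-paper argument to compare against. Your reconstruction follows the standard route of that reference (and of \cite{dav-dru-kak-lee-19}) and is essentially correct in all three parts: projection formula plus a.e.\ tangency for conservativity of $\partial f$, selection-independence of $\langle \sv(t),\dot\sx(t)\rangle$ plus a Carath\'eodory/Castaing decomposition for $\conv D$, and the segment/support-function argument at Rademacher points followed by graph-closedness for the inclusion $\partial f \subset \conv D$. One small correction on attribution of the key step: the tangency property (for a.e.\ $t$ with $\sx(t)\in\cX$ one has $\dot\sx(t)\in\cT_{\cX}(\sx(t))$) is not obtained from a coarea argument or from Whitney-(a) regularity; it is a purely measure-theoretic density-point argument on the set $A=\{t:\sx(t)\in\cX\}$ --- at a density point of $A$ where $\sx$ is differentiable, $\dot\sx(t)$ is a limit of chords with both endpoints in $\cX$, hence tangent. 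Relatedly, the chain-rule identity $\tfrac{\dif}{\dif t}f(\sx(t))=\langle\nabla_{\cX(t)}f_{|\cX(t)}(\sx(t)),\dot\sx(t)\rangle$ also needs the density-point argument (the curve does not stay in the stratum), not a direct composition of derivatives; you gloss over this, but it is standard and does not affect correctness.
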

In particular, a point $x \in \bbR^d$ is a local minimum only if $0 \in \conv D(x)$.

  The notion of conservativity readily extends to the case where the potential function is vector-valued.
  \begin{definition}[\cite{bolte2021conservative}]\label{def:cons_map}
    Let $f: \bbR^d \rightarrow \bbR^m$ be a locally Lipschitz continuous function.  We say that a locally bounded, closed set-valued map $D: \bbR^d \rightrightarrows \bbR^{m \times d}$, with nonempty values is a \emph{conservative mapping} for $f$, if for any absolutely continuous curve $\sx : [0,1] \rightarrow \bbR^d$, and any measurable function $\sJ : [0,1] \rightarrow \bbR^{m \times d}$, such that for all $t \in [0,1]$, $\sJ(t) \in D(\sx(t))$, it holds that
    \begin{equation*}
      f(\sx(1)) = f(\sx(0)) + \int_{0}^1 \sJ(\sx(t))\dot{\sx}(t) \dif t \, .
    \end{equation*}
  \end{definition}
  
  Of course, when $m=1$, this definition is exactly the one of a conservative set-valued field. Not surprisingly, rows of a conservative mapping are actually conservative fields for the corresponding coordinate of $f$ (see \cite[Section 3.3]{bolte2021conservative}).

  As we will see in the next section, any definable conservative mapping admits a transparent geometric structure: a \emph{variational stratification}.
\section{Variational stratifications}\label{sec:var_strat}
From a geometric perspective, it turns out that in the definable setting conservative mappings are exactly those maps that admit a \emph{variational stratification}.
\begin{definition}[{\cite[Definition 5]{bolte2021conservative}}]\label{def:var_strat}
  Consider $p \geq 1$, $B \subset \bbR^d$, $f: B \rightarrow \bbR^m$ and $D: \bbR^{d} \rightrightarrows \bbR^{m \times d}$. We say that the pair $(f,D)$ admits a \emph{$C^p$ variational stratification} if there is a $C^p$ Whitney stratification $\bbX = (\cX_i)$ of $\bbR^d$, compatible with $B$, such that $f$ is $C^p$ on each stratum and for every $x \in \cX \subset B$,
  \begin{equation}\label{eq:def_max_cons}
    \begin{split}
      D(x) \subset \Jac_{\cX} f(x) + \cR^m_{\cX}(x)\, ,
    \quad  \textrm{ with } \cR^m_{\cX}(x) = \{R \in \bbR^{m \times d} : \cT_x \cX \subset \ker R\}\, .
    \end{split}
  \end{equation}
  If $f, D$ and $\bbX$ are definable, then we will say that $(f,D)$ admits a \emph{definable} $C^p$ stratification.
\end{definition}
\begin{remark}
  In other words, for every $x \in \cX$ and $J \in D(x)$,
  \begin{equation*}
    J = \Jac_{\cX} f(x) + [u_1, \ldots, u_m]^{\top}\, ,
  \end{equation*}
  where for $1\leq i \leq m$, $u_i \in \cN_{\cX}(x)$. Equivalently, for every $h \in \cT_{\cX}(x)$, $Jh = \Jac_{\cX}f(x)h$.
\end{remark}

\begin{remark}
Note that for a fixed stratification $C^p$ Whitney stratification $\bbX = (\cX_i)$, the mapping $x \rightrightarrows \cR^m_{\cX}(x)$, is a conservative mapping of the zero function. This mapping previously appeared in \cite{bolte2021nonsmooth} under the name of residual.
\end{remark}
\begin{remark}
  By a slight abuse of notations, when $f$ is real-valued, we will consider mappings $D : \bbR^d \rightrightarrows \bbR^{d}$ (as opposed to $\bbR^{d} \rightrightarrows \bbR^{1 \times d}$.) In this case, Equation~\eqref{eq:def_max_cons} becomes 
  \begin{equation*}
  D(x) \subset \nabla_{\cX} f(x) + \cN_{\cX}(x) \, .
  \end{equation*}
  This is analogous to the fact that for differentiable real-valued functions it is sometimes more convenient work with the gradient $\nabla f(x) \in \bbR^d$ than with the Jacobian $\Jac f(x)  = (\nabla f(x))^{\top}\in \bbR^{1 \times d}$.
\end{remark}
\begin{remark}
  In optimization we sometimes consider extended-valued functions $f: \bbR^d \rightarrow \bbR \cup \{-\infty, + \infty \}$. In such case, denoting $ \dom f = \{x: x \in \bbR^d\, , |f(x)| \neq + \infty \}$, we will say that $(f,D)$ admits a $C^p$ variational stratification if $(f_{|\dom f}, D)$ admits a $C^p$ variational stratification.
\end{remark}

Note that, differently to \cite{bolte2021conservative}, we do not impose on $D$ to be defined everywhere or have compact values. In particular, it is allowed to be empty-valued.

The link between conservative mappings and variational stratifications is given by the following result.
\begin{proposition}[{\cite[Theorem 2.2]{lewis2021structure}} and \cite{pauwels2023conservative,davis2022conservative}]\label{prop:cons_strat}  
    Let $f: \bbR^d \rightarrow \bbR^m$ be definable, locally Lipschitz continuous, and let $D: \bbR^d \rightrightarrows \bbR^{m \times d}$ be a definable conservative mapping of $f$. The pair $(f,D)$ admits a $C^p$ variational stratification for any $p \geq 1$.
    
    Conversely, if there exists a closed, nonempty-valued, locally bounded mapping $D: \bbR^d \rightrightarrows \bbR^{m \times d}$ such that $(f,D)$ admits a $C^p$ variational stratification, then $D$ is a conservative set-valued field of $f$.
  \end{proposition}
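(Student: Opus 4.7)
\medskip

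\noindent\textbf{Plan of proof.} The statement has two directions, which I would treat separately. For the forward implication I would start by stratifying the pair $(f,D)$ simultaneously: applying Proposition~\ref{prop:whitney_func} (or rather Proposition~\ref{prop:whitn_graph}) to the definable function $f$ while forcing the stratification of $\bbR^d$ to be compatible with the projection of $\Graph D$ onto its first $d$ coordinates, I obtain a Whitney $C^p$ stratification $\bbX=(\cX_i)$ on which $f$ is $C^p$ on each stratum. On top of this, I would further refine by Whitney-stratifying the graph of the set-valued map $D: \bbR^d \rightrightarrows \bbR^{m \times d}$ so that, over each stratum $\cX \in \bbX$, the graph $\Graph D \cap (\cX \times \bbR^{m \times d})$ decomposes into finitely many definable $C^p$ manifolds on which the natural projection onto $\cX$ is a submersion. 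This is the standard definable cell/stratification input used in \cite{lewis2021structure,pauwels2023conservative,davis2022conservative}.

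With such refined stratifications in hand, fix $x \in \cX$, $J \in D(x)$ and $h \in \cT_{\cX}(x)$; the goal is to show $Jh = \Jac_{\cX} f(x) h$. By the stratification of $\Graph D$ over $\cX$, there exists a $C^p$ curve $t \mapsto (\sx(t), \sJ(t))$, with $\sx(t) \in \cX$, $\sx(0)=x$, $\dot{\sx}(0)=h$, $\sJ(t) \in D(\sx(t))$ and $\sJ(0)=J$. Since $D$ is a conservative mapping of $f$, applying~\eqref{eq:cons_field} coordinate-wise to this curve and its reverse and differentiating at $t=0$ gives $\tfrac{\dif}{\dif t} f(\sx(t))\big|_{t=0} = \sJ(0)\dot{\sx}(0) = Jh$. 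On the other hand, since $f$ is $C^p$ on $\cX$ and $\sx$ is a curve in $\cX$, that same derivative equals $\Jac_{\cX} f(x) h$. As $J \in D(x)$ and $h \in \cT_{\cX}(x)$ were arbitrary, we obtain $D(x) \subset \Jac_{\cX} f(x) + \cR^m_{\cX}(x)$, which is exactly~\eqref{eq:def_max_cons}. The main obstacle here is the construction of a selection $\sJ$ passing through the prescribed $J$: this is where one genuinely needs the definable structure (to stratify $\Graph D$ and submerse onto $\cX$), since a naive measurable selection would only give a representative of $J$ up to a Lebesgue-null set.

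For the converse, suppose $D$ is closed, nonempty-valued, locally bounded, and $(f,D)$ admits a $C^p$ variational stratification $\bbX$. Fix an absolutely continuous curve $\sx : [0,1] \to \bbR^d$ and a measurable selection $\sJ(t) \in D(\sx(t))$. The key observation is that by the stratification of $\sx([0,1])$ (viewed as the image of $[0,1]$ under the absolutely continuous curve and decomposed along the finitely many strata it visits), for almost every $t \in [0,1]$ there is a unique stratum $\cX_{i(t)}$ containing $\sx(t)$ on a small open interval around $t$, so that $\dot{\sx}(t)$ exists and lies in $\cT_{\cX_{i(t)}}(\sx(t))$. Combining~\eqref{eq:def_max_cons} with the definition of $\cR^m_{\cX}$, we then have, for a.e.\ $t$,
\begin{equation*}
\sJ(t)\dot{\sx}(t) = \Jac_{\cX_{i(t)}} f(\sx(t))\, \dot{\sx}(t) = \tfrac{\dif}{\dif t} f(\sx(t))\,,
\end{equation*}
where the last equality uses that $f$ is $C^p$ on $\cX_{i(t)}$ and the classical chain rule. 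Integrating and using absolute continuity of $t \mapsto f(\sx(t))$ (which itself follows from the local boundedness of $D$ and the first equality above) yields~\eqref{eq:cons_field}, establishing conservativity.

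The main technical obstacle, in both directions, is the handling of the ``transition'' set of times at which $\sx(t)$ (or the curve built in the forward direction) jumps between strata: one must argue this set has Lebesgue measure zero and that it does not contribute to the integral. This is where Whitney-(a) and the finiteness of the stratification enter crucially, together with the classical fact that a definable subset of $[0,1]$ is a finite union of intervals and points, applied either to $\sx$ (when $\sx$ is itself definable, a standard approximation argument then extends the conclusion to arbitrary absolutely continuous curves) or, in the general case, to the preimages of strata under suitable definable refinements.
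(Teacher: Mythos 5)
The paper does not prove this proposition: it is imported verbatim from the cited references (\cite{lewis2021structure,pauwels2023conservative,davis2022conservative}), so there is no internal proof to compare against. Your sketch is a fair reconstruction of the standard arguments from that literature, and both directions are workable, but each has a soft spot worth naming. In the forward direction, the order of stratification matters: you cannot first fix $\bbX$ and then hope to decompose $\Graph D \cap (\cX \times \bbR^{m\times d})$ into pieces that submerse \emph{onto} $\cX$ (an isolated point $(x,J)$ of $\Graph D$ sitting over the interior of a stratum already defeats this). The correct device is the one the paper uses in Proposition~\ref{prop:whitn_graph}: apply Proposition~\ref{prop:whitney_func} to the projection $\Pi^x\colon \Graph D \rightarrow \bbR^d$, with the base stratification taken compatible with one on which $f$ is $C^p$; constant rank plus $\Pi^x(\cG)=\cX$ then forces $\Pi^x|_{\cG}$ to be a submersion onto $\cX$, and the curve through a prescribed $(x,J)$ with prescribed base tangent $h$ exists. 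With that fixed, differentiating the conservativity identity along your continuous selection at $t=0$ does give $Jh=\Jac_{\cX}f(x)h$.

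In the converse, the assertion that for a.e.\ $t$ the curve stays in a single stratum on an open interval around $t$ is false for a general absolutely continuous curve, which can oscillate between strata on every interval; and the proposed remedy of approximating $\sx$ by definable curves is delicate because the measurable selection $\sJ$ does not pass to such limits. The standard (and cleaner) argument is the Lebesgue density-point lemma: for a.e.\ $t$ at which $\dot{\sx}(t)$ exists and which is a density point of $\sx^{-1}(\cX_{i(t)})$, the difference quotients along times in $\sx^{-1}(\cX_{i(t)})$ show both that $\dot{\sx}(t)\in\cT_{\cX_{i(t)}}(\sx(t))$ and that $\tfrac{\dif}{\dif t}f(\sx(t))=\Jac_{\cX_{i(t)}}f(\sx(t))\,\dot{\sx}(t)$; absolute continuity of $t\mapsto f(\sx(t))$ comes from the local Lipschitz continuity of $f$ (not from the boundedness of $D$). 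With these two repairs your outline matches the proofs in the cited sources.
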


Therefore, Definition~\ref{def:var_strat} is strictly more general than Definitions~\ref{def:cons_f} and \ref{def:cons_map}. As we show in the next paragraph, the notion of a variational stratification allows us to work with potentials $f$ that are not necessarily Lipschitz continuous\footnote{We note that in Definition~\ref{def:var_strat} the function $f$ is not even required to be continuous. It is however $C^p$  around any point in a stratum of full dimension, hence, almost everywhere. Furthermore, as shows the example of $f(x) = \1_{x \geq 0}$ and $D(x) \equiv \{0\}$, $f$ does not need to admit a weak derivative in the sense of Sobolev.} or with set-valued maps that are not locally bounded or even defined everywhere. Furthermore, in Section~\ref{sec:def_limit} we will establish that the gradient's limit will admit a variational stratification, which, depending on the regularity of $(f_a)$, can be empty-valued at some points.

\paragraph{Examples.} To present some interesting examples, we first need to define various notions of tangent and normal cones of a set.
For a set $B \subset \bbR^d$, the tangent cone is defined as
\begin{equation*}
  \cT^{F}_{B}(x) =\{ v \in \bbR^d : \textrm{ there is $(t_n, x_n) \rightarrow (0, x)$, with $t_n >0$, $x_n \in B$ and $(x_n - x)/t_n \rightarrow v$} \}\, .
\end{equation*}
The Frechet normal cone, $\cN_B^{F} : \bbR^d \rightrightarrows \bbR^d$, is defined as $\cN_{|\bbR^d \backslash B}\equiv \emptyset$ and for $x \in B$,
\begin{equation*}
  \cN^{F}_B(x) = \{ v \in \bbR^d: v^{\top}u \leq 0 \textrm{ for all $u \in \cT_{B}(x)$} \} \footnote{If $B$ is a $C^p$ manifold, then $\cT_{B}(x)$, $\cN_B(x)$ are simply the tangent and the normal planes of $B$ at $x$.}\, .
\end{equation*}
The limiting normal cone is then defined as 
\begin{equation*}
  \cN^{L}_B(x) = \{ v \in \bbR^d: \textrm{there is $(x_n, v_n) \rightarrow (x,v)$, $x_n \in B$ and $v_n \in \cN^{F}_B(x_n)$}\}\, .
\end{equation*}
Finally, the Clarke normal cone is 
\begin{equation*}
  \cN^{c}_B(x) = \overline{\conv} \cN^{L}_{B}(x) \, ,
\end{equation*}
where $\overline{\conv}$ denotes the closure of the convex hull.
The following proposition will be a simple consequence of results of Section~\ref{sec:clos_graph_consmpa} (see Remark~\ref{rmk:proof_ofprop} for a proof).
\begin{proposition}\label{prop:defvar_normalcone}
  Assume that the set $B$ is definable. Let $\iota_{B} : \bbR^d \rightarrow \bbR \cup \{+ \infty \}$ be the characteristic function: $\iota_{B} \equiv 0$ on $B$ and $\iota_{B} \equiv + \infty$ on $\bbR^d \backslash B$. Then, for any $p \geq 1$, $(\iota_{B}, \cN^{F}_B)$, $(\iota_{B}, \cN^{L}_B)$ and $(\iota_{B}, \cN^{c}_B)$ admit a definable $C^p$ variational stratification.
\end{proposition}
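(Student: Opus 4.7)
My plan is to reduce the claim, via a single definable Whitney $C^p$ stratification compatible with $B$, to a pointwise containment of the three normal cones inside the normal space of the ambient stratum. First I would apply Proposition~\ref{prop:whitney_strat} with the family $\{B\}$ to obtain a definable Whitney $C^p$ stratification $\bbX=(\cX_i)$ of $\bbR^d$ compatible with $B$. Since $\dom \iota_B = B$ and $\iota_B\equiv 0$ on $B$, only strata $\cX\subset B$ need to be checked, on which $\iota_B$ is $C^p$ with $\nabla_{\cX}\iota_B\equiv 0$. The variational stratification condition then collapses, for each $D\in\{\cN^F_B,\cN^L_B,\cN^c_B\}$ and each $x\in\cX\subset B$, to the single inclusion
\begin{equation*}
D(x)\subset\cN_{\cX}(x)\,.
\end{equation*}

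Next I would establish these three inclusions in order. For the Fréchet case, any curve inside $\cX$ stays inside $B$, so $\cT_{\cX}(x)\subset\cT^F_B(x)$; taking polars and using that the polar of a linear subspace equals its orthogonal complement yields $\cN^F_B(x)\subset\cN_{\cX}(x)$. For the limiting case, take $(x_n,v_n)\to(x,v)$ with $x_n\in B$ and $v_n\in\cN^F_B(x_n)$, and let $\cX_{i(n)}$ denote the stratum of $x_n$. Compatibility of $\bbX$ with $B$ forces $\cX_{i(n)}\subset B$, so the Fréchet step already proved gives $v_n\in\cN_{\cX_{i(n)}}(x_n)$. Now I would invoke the Whitney-(a) condition in the angular form of Remark~\ref{rmk:tancon_whitn}: any unit $u\in\cT_{\cX}(x)$ admits approximants $u_n\in\cT_{\cX_{i(n)}}(x_n)$ with $u_n\to u$; passing to the limit in $v_n^\top u_n = 0$ yields $v^\top u=0$, hence $v\in\cN_{\cX}(x)$. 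Finally, because $\cN_{\cX}(x)$ is a closed linear subspace, it is stable under closed convex hulls, and so $\cN^c_B(x)=\overline{\conv}\,\cN^L_B(x)\subset\cN_{\cX}(x)$ as well.

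Definability of the three mappings is not an obstacle: $\cN^F_B$ is described by a first-order formula over the definable set $B$, while $\cN^L_B$ and $\cN^c_B$ are obtained by definable operations (taking limits of pairs, convex hull, closure), all of which preserve definability in an o-minimal structure. The step I expect to require the most care is the last part of the limiting case, where one needs that prescribed $u\in\cT_{\cX}(x)$ admit approximants $u_n\in\cT_{\cX_{i(n)}}(x_n)$ so that both $v_n\cdot u_n\to v\cdot u$ and $v_n\cdot u_n=0$ are legitimate; this is precisely what the quantified angle control in Whitney-(a) delivers, but it requires one to be careful that the index $i(n)$ need not be constant and that boundedness of $(v_n)$ and $(u_n)$ is used to pass to the limit. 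The alternative route, hinted at in the statement and presumably taken in Section~\ref{sec:clos_graph_consmpa}, would be to derive the $\cN^L_B$ case from a general closure-of-graph stability property for variational stratifications applied to $\cN^F_B$, bypassing the Whitney-(a) computation entirely.
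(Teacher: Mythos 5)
Your proof is correct, and the Fr\'echet step ($\cT_{\cX}(x)\subset\cT^F_B(x)$, hence $\cN^F_B(x)\subset\cN_{\cX}(x)$ on every stratum $\cX\subset B$ of a Whitney stratification compatible with $B$) is exactly the paper's argument. Where you diverge is the limiting and Clarke cases: the paper (Remark~\ref{rmk:proof_ofprop}) simply cites Proposition~\ref{prop:graph_closed}, the general statement that closing the graph of a mapping preserves the definable variational stratification property, which is itself obtained from the heavier Theorem~\ref{thm:graph_closed_param} via Thom $(a_f)$ stratifications. You instead unfold the closure by hand: $v_n\in\cN^F_B(x_n)\subset\cN_{\cX_{i(n)}}(x_n)$, then Whitney-(a) in the form of Remark~\ref{rmk:tancon_whitn} supplies approximants $u_n\in\cT_{\cX_{i(n)}}(x_n)$ of a prescribed $u\in\cT_{\cX}(x)$, and $v_n^\top u_n=0$ passes to the limit; the Clarke case follows because $\cN_{\cX}(x)$ is a closed linear subspace, hence absorbs $\overline{\conv}$. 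This is a legitimate and more elementary, self-contained route for this particular proposition --- in effect you are reproving the special case of Proposition~\ref{prop:graph_closed} where the potential is the constant function on $B$, for which the Thom condition degenerates to Whitney-(a). What the paper's route buys is uniformity (one closure lemma serving all such statements, including the horizontal cone $D_\infty$ and the non-constant potentials appearing later); what yours buys is independence from Section~\ref{sec:clos_graph_consmpa}. The two points you flag as delicate --- that $i(n)$ need not be constant (fixed by passing to subsequences over the finitely many strata) and that boundedness of $(v_n)$ and $(u_n)$ is needed to pass to the limit in the inner product --- are handled correctly.
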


Now, consider a continuous definable function $f: \bbR^d \rightarrow \bbR$. We define the Frechet, limiting, horizontal and Clarke subgradients of $f$ at $x\in \bbR^d$ as follows. 
\begin{equation*}
  \begin{split}
  \partial_F f(x) &= \{ v\in\bbR^d: \forall y \in \bbR^d\, , f(y) - f(x) \geq \scalarp{v}{y-x} + o(\norm{y-x})\}\,, \\
  \partial_L f(x) &= \{ v\in\bbR^d: \textrm{ there is $(x_n, v_n) \rightarrow (x,v)$, $v_n \in \partial_F f(x_n)$}\}\,, \\
  \partial^{\infty} f(x) &= \{ v \in \bbR^d :\textrm{ there is $(x_n, t_n, t_n v_n) \rightarrow (x,0, v)$, $v_n \in \partial_F f(x_n)$, $t_n >0$}\}\,, \\
  \partial f(x) &= \overline{\conv} (\partial_L f(x) + \partial^{\infty} f(x))\, .
\end{split}
\end{equation*}
The following proposition is the celebrated projection formula proved in \cite{bolte2007clarke}.
\begin{proposition}
  If $f: \bbR^d \rightarrow \bbR$ is continuous and definable, then the pairs $(f, \partial_F f)$, $(f, \partial_L f)$, $(f, \partial f)$ admit a definable $C^p$ variational stratification.
\end{proposition}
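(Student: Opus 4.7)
The plan is to produce a single definable $C^p$ Whitney stratification $\bbX = (\cX_i)$ of $\bbR^d$ that witnesses the variational stratification for all three subdifferentials at once. By Proposition~\ref{prop:f_thom}, I pick $\bbX$ to satisfy the Thom $(a_f)$ condition. I would then handle the three subdifferentials in cascade: $\partial_F f$ directly, $\partial_L f$ by a limiting argument invoking both Whitney-(a) and Thom, $\partial^{\infty} f$ by a scaling variant of the previous, and finally $\partial f$ by an algebraic combination.

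For $\partial_F f$, fix $x \in \cX \in \bbX$ and $v \in \partial_F f(x)$. Given $h \in \cT_{\cX}(x)$, choose a $C^p$ curve $\gamma : (-\varepsilon, \varepsilon) \to \cX$ with $\gamma(0)=x$ and $\gamma'(0)=h$, and let $\tilde{f}$ be a $C^p$ smooth representative of $f_{|\cX}$ near $x$. Plugging $y=\gamma(\pm t)$ into the Frechet subgradient inequality and using $f \circ \gamma = \tilde{f} \circ \gamma$ yields, after $t \downarrow 0$, the identity $\scalarp{v}{h} = \scalarp{\nabla \tilde{f}(x)}{h} = \scalarp{\nabla_{\cX} f(x)}{h}$, hence $v - \nabla_{\cX} f(x) \in \cN_{\cX}(x)$.

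For $\partial_L f$, take $v = \lim v_n$ with $v_n \in \partial_F f(x_n)$ and $x_n \to x \in \cX$. After extraction, the $x_n$ lie in a single stratum $\cX'$ with $\cX \subset \overline{\cX'}$. The previous step provides the orthogonal decomposition $v_n = \nabla_{\cX'} f(x_n) + w_n$ with $w_n \in \cN_{\cX'}(x_n)$; orthogonality together with the boundedness of $v_n$ keeps both summands bounded, so a further extraction yields $\nabla_{\cX'} f(x_n) \to u$ and $w_n \to w$ with $u + w = v$. Whitney-(a) provides, for every unit $h \in \cT_{\cX}(x)$, a lift $h_n \in \cT_{\cX'}(x_n)$ with $h_n \to h$; pairing with $w_n$ forces $w \in \cN_{\cX}(x)$. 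It remains to show $P_{\cT_{\cX}(x)} u = \nabla_{\cX} f(x)$. The Thom $(a_f)$ condition supplies the direction: for $h \in \ker \dif f_{|\cX}(x)$ we can in addition choose a lift $h_n \in \ker \dif f_{|\cX'}(x_n)$, $h_n \to h$, so $\scalarp{u}{h} = \lim \dif f_{|\cX'}(x_n)[h_n] = 0$, pinning $P_{\cT_{\cX}(x)} u$ to the line spanned by $\nabla_{\cX} f(x)$. Matching magnitudes, that is, identifying $\lim \dif f_{|\cX'}(x_n)[h_n]$ with $\dif f_{|\cX}(x)[h]$ for $h = \nabla_{\cX} f(x)$ and its Whitney-(a) lift, combines the Thom condition with the continuity of $f$ across strata, and is the step I expect to be the main technical obstacle.

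For $\partial^{\infty} f$, the same setup with $v = \lim t_n v_n$, $t_n \to 0^+$, gives $t_n \nabla_{\cX'} f(x_n) \to u'$ and $t_n w_n \to w'$ with $u' + w' = v$. Whitney-(a) again yields $w' \in \cN_{\cX}(x)$; and the magnitude lemma established for $\partial_L f$ (applied to the tangential component alone, which does not require the full sequence $\nabla_{\cX'} f(x_n)$ to be bounded) gives that $P_{\cT_{\cX}(x)} \nabla_{\cX'} f(x_n) \to \nabla_{\cX} f(x)$, hence is bounded, so multiplication by $t_n \to 0$ forces $P_{\cT_{\cX}(x)} u' = 0$, i.e., $u' \in \cN_{\cX}(x)$. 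Therefore $\partial^{\infty} f(x) \subset \cN_{\cX}(x)$. Finally, $\nabla_{\cX} f(x) + \cN_{\cX}(x)$ is a closed convex affine subspace stable under adding elements of $\cN_{\cX}(x)$, so $\partial f(x) = \overline{\conv}(\partial_L f(x) + \partial^{\infty} f(x)) \subset \nabla_{\cX} f(x) + \cN_{\cX}(x)$, completing the proof.
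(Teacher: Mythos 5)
Your argument for $\partial_F f$ is correct and complete, and the final reduction of $\partial f$ to $\partial_L f + \partial^{\infty} f$ is fine. The genuine gap is precisely the step you flag: for $\partial_L f$ you need $\scalarp{u}{\nabla_{\cX} f(x)} = \norm{\nabla_{\cX} f(x)}^2$ where $u = \lim \nabla_{\cX'} f(x_n)$, and neither the Thom $(a_f)$ condition nor Whitney-(a) of the \emph{domain} stratification can deliver this. The Thom condition constrains only the kernels of the differentials, i.e.\ the directions of the level sets; it says nothing about the size of the differential transverse to them. Concretely, take $f(x,y) = x + xy/\sqrt{x^2+y^2}$ (extended by $f(0,0)=0$), which is continuous and semialgebraic, and stratify $\bbR^2$ into the $x$-axis and the two open half-planes. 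This is a definable $C^{\infty}$ Whitney stratification on which $f$ is $C^{\infty}$ and of constant rank $1$ per stratum, and the Thom $(a_f)$ condition holds vacuously because $\ker \dif f_{|x\text{-axis}} = \{0\}$. Yet $\nabla f(t,t) = (1 + \tfrac{1}{2\sqrt{2}},\, \tfrac{1}{2\sqrt{2}})$ for all $t>0$, so $\partial_L f(0,0)$ contains a vector whose projection onto $\cT_{\cX}(0) = \bbR e_1$ is $(1+\tfrac{1}{2\sqrt{2}})e_1 \neq e_1 = \nabla_{\cX} f(0,0)$. So the step is not merely ``the main technical obstacle'': it is false for a generic Thom stratification, and the stratification must be refined near such points. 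The same unproved ``magnitude lemma'' is also invoked in your treatment of $\partial^{\infty} f$, so that part inherits the gap.

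The way to close it --- both in \cite{bolte2007clarke}, which the paper cites for this proposition, and with the paper's own toolkit --- is to stratify the \emph{graph} rather than only the domain. Take $\bbX$ and $\bbG$ as in Proposition~\ref{prop:whitn_graph}, so that $\Graph f_{|\cX}$ is a stratum of the Whitney stratification $\bbG$ for each $\cX \in \bbX$. Your Frechet step is equivalent, via Lemma~\ref{lm:tan_graph}, to the statement $(v_n,-1) \in \cN_{\Graph f_{|\cX'}}(x_n, f(x_n))$ for $v_n \in \partial_F f(x_n)$. The dual form of Whitney-(a) for $\bbG$ --- limits of bounded normals to $\cG'$ at points converging to $g \in \cG \cap \overline{\cG'}$ are normal to $\cG$ --- then gives $(v,-1) \in \cN_{\Graph f_{|\cX}}(x,f(x))$, i.e.\ $P_{\cT_{\cX}(x)} v = \nabla_{\cX}f(x)$, with no separate handling of tangential and normal components; applying it to $(t_n v_n, -t_n) \to (v,0)$ disposes of $\partial^{\infty} f$ in one line (continuity of $f$ is exactly what guarantees $(x_n,f(x_n)) \to (x,f(x))$ here). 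This is also how the paper intends the result to be recovered, namely as an instance of the graph-closure stability of Proposition~\ref{prop:graph_closed}; see Remark~\ref{rmk:proj_bolte}.
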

\begin{remark}
  Considering $f(x)=  \sign(x) \sqrt{|x|}$, we see that all of these subgradients may be empty-valued: $\partial f(0) = \emptyset$. Moreover, they are non locally bounded. Thus, formally speaking, they are not conservative fields of $f$.
\end{remark}

The rest of this section is divided in two parts. In the first, we show that in the definable setting, Definition~\ref{def:var_strat} is versatile enough to preserve simple computational properties such as stability by union, convexity of composition. 
In the second, we establish an important property of definable variational stratifications: stability by graph's closure. 

To understand the motivation behind such result let us return to the problem stated in the introduction.
If the function $f:(x,a) \mapsto f_a(x)$ turned out to be differentiable, then obviously $\nabla f_a(x) = \Pi^x(\nabla f(x,a))$, where $\Pi^x : \bbR^{d+1} \rightarrow \bbR^d$ is the restriction onto the first $d$ coordinates. Hence, if $f$ is $C^1$, then automatically $D(x) = \nabla F(x)$. This observation indicates a potential approach to solve the problem: construct a mapping $\tilde{D} : \bbR^{d+1} \rightrightarrows \bbR^{d+1}$, such that for any $a$, $\partial f_a(x) = \Pi^x(\tilde{D}(x,a))$. If such mapping $\tilde{D}$ was constructed, $D_F$ can be interpreted as some kind of ``limit" of $\partial f_a = \Pi^x(\tilde{D}(x,a))$, when $a \rightarrow 0$ (see Theorem~\ref{thm:graph_closed_param} for the exact definition of the limit). Thus, we need to investigate the stability of the variational stratification property under such limit, which is exactly the content of Theorem~\ref{thm:graph_closed_param}.

\subsection{Computational properties of definable variational stratifications}\label{sec:comp_prop_defvar}

In this section, we establish some simple properties of variational stratifications.
In the following, we fix a set $B \subset \bbR^d$, a function $f: B \rightarrow \bbR^m$ and  $p\geq1$.

First, if $f$ is definable, then there is always a non-trivial mapping $D$, such that $(f,D)$ admits a variational stratification.

\begin{lemma}
  If $f$ is definable, then there is a definable map $D: \bbR^d \rightrightarrows \bbR^{m\times d}$ such that for all $x \in B$, $D(x) \neq \emptyset$ and $(f,D)$ admits a definable $C^p$ variational stratification.
\end{lemma}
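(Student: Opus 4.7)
The plan is to take a definable Whitney $C^p$ stratification of $\bbR^d$ on which $f$ is smooth along each stratum contained in $B$, and then to set $D$ pointwise equal to the singleton formed by the Riemannian Jacobian of the corresponding smooth restriction. This is the most canonical possible choice and is forced, up to residuals, by the variational stratification property.

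Concretely, I would first invoke Proposition~\ref{prop:whitney_func} together with the remark that follows it (which removes the closedness assumption on the domain) to produce a definable Whitney $C^p$ stratification $\bbX_B = (\cX_1,\dots,\cX_k)$ of $B$ such that $f_{|\cX_i}$ is $C^p$ for every $i$. Then I would apply Proposition~\ref{prop:whitney_strat} to the finite family $\{B,\cX_1,\dots,\cX_k\}$ of definable subsets of $\bbR^d$ to obtain a definable Whitney $C^p$ stratification $\bbX$ of $\bbR^d$ that is compatible with $B$ and refines $\bbX_B$ on $B$. By compatibility, every $\cX \in \bbX$ with $\cX \subset B$ is contained in some $\cX_i$, so $f_{|\cX}$ inherits $C^p$ smoothness.

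I would then define
\begin{equation*}
D(x) = \{\Jac_{\cX} f(x)\} \quad \text{if } x \in \cX \in \bbX \text{ with } \cX \subset B, \qquad D(x) = \emptyset \text{ otherwise.}
\end{equation*}
By construction $D(x)\neq\emptyset$ for all $x\in B$. The variational stratification inclusion $D(x)\subset \Jac_{\cX} f(x)+\cR^m_{\cX}(x)$ demanded by Definition~\ref{def:var_strat} is automatic, since $0\in \cR^m_{\cX}(x)$. Hence, once definability of $D$ has been verified, $(f,D)$ admits a definable $C^p$ variational stratification witnessed by $\bbX$.

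The only step requiring genuine verification, and therefore the main (mild) obstacle, is the definability of $D$. Since $\bbX$ is a finite partition into definable strata, it is enough to show that for each $\cX \in \bbX$ with $\cX \subset B$ the set $\{(x,\Jac_{\cX}f(x)) : x\in \cX\}$ is definable. This is a standard o-minimal fact: the map $x\mapsto \cT_{\cX}(x)$ is definable (as tangent spaces to a definable manifold), the differential $\dif f_{|\cX}(x)[h]$ for $h\in\cT_{\cX}(x)$ is a definable limit of difference quotients along $\cX$, and $\Jac_{\cX}f(x)$ is characterized as the unique matrix whose rows lie in $\cT_{\cX}(x)$ and which realizes $\dif f_{|\cX}(x)$ on $\cT_{\cX}(x)$. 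Each of these conditions is first-order over $\cO$, so $\Jac_{\cX}f$ is a definable function on $\cX$, and hence $\Graph D$ is a finite union of definable sets. Beyond this routine definability check, the statement is essentially immediate from the existing stratification theorems.
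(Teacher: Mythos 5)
Your proposal is correct and follows essentially the same route as the paper: take a definable Whitney $C^p$ stratification compatible with $B$ on which $f$ is $C^p$ stratum-wise, and set $D(x)=\{\Jac_{\cX}f(x)\}$ on strata inside $B$ and $D(x)=\emptyset$ elsewhere. The paper's proof is just this construction in two sentences; your additional definability check of $\Jac_{\cX}f$ is a reasonable elaboration of a point the paper leaves implicit.
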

\begin{proof}
  There is  $(\cX_i)$, a $C^p$ Whitney stratification of $\bbR^d$, compatible with $B$, such that $f$ is $C^p$ on each stratum. The map $D(x) = \Jac_{\cX} f(x)$ if $x \in \cX \subset B$ and $D(x) = \emptyset$ otherwise, satisfies the claim.
\end{proof}
The next lemmas establish that the concept of variational stratification is stable by union.
\begin{lemma}\label{lm:strat_adapt}
  Let the pair $(f,D)$ admit a definable $C^p$ variational stratification $\bbX$. If $\bbX'$ is a Whitney $C^p$ stratification, compatible with $\bbX$, then $\bbX'$ satisfies the requirements of Definition~\ref{def:var_strat}.
\end{lemma}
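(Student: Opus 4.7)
The plan is to verify the three requirements of Definition~\ref{def:var_strat} for $\bbX'$. The compatibility hypothesis ensures every $\cX' \in \bbX'$ is contained in a unique stratum $\cX \in \bbX$, and I will transfer each property from $\cX$ to $\cX'$.

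The two easy items I would dispose of at once. First, compatibility of $\bbX'$ with $B$ follows from compatibility of $\bbX$ with $B$: if the ambient $\cX$ is contained in $B$ then so is $\cX'$, and otherwise $\cX \cap B = \emptyset$ forces $\cX' \cap B = \emptyset$. Second, $f_{|\cX'}$ is $C^p$ because any $C^p$ smooth representative of $f_{|\cX}$ around a point $x \in \cX'$ is \emph{a fortiori} a smooth representative of $f_{|\cX'}$ there.

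The substantive step is the variational inclusion $D(x) \subset \Jac_{\cX'} f(x) + \cR^m_{\cX'}(x)$ for $x \in \cX' \subset B$. The key observation is that, since $\cX'$ is a submanifold of $\cX$, one has $\cT_{\cX'}(x) \subset \cT_{\cX}(x)$; picking a common $C^p$ representative $\tilde f$ around $x$ then yields $\Jac_{\cX'} f(x)\, h = \Jac \tilde f(x)\, h = \Jac_{\cX} f(x)\, h$ for every $h \in \cT_{\cX'}(x)$. Combining this with the variational inclusion for $\bbX$, which gives $Jh = \Jac_{\cX} f(x)\, h$ for all $J \in D(x)$ and $h \in \cT_{\cX}(x) \supset \cT_{\cX'}(x)$, shows that $J - \Jac_{\cX'} f(x)$ annihilates $\cT_{\cX'}(x)$, i.e.\ it belongs to $\cR^m_{\cX'}(x)$. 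No genuine obstacle arises, as the statement is essentially a coherence check for the definition; the only point I would pause on is to compare Jacobians through a common smooth representative in $\bbR^d$, rather than attempting to decompose the residual $\Jac_{\cX} f(x) - \Jac_{\cX'} f(x)$ via orthogonal projectors, which is correct but more cumbersome.
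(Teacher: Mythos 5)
Your argument is correct and coincides with the paper's own proof: both reduce to the inclusion $\cT_{\cX'}(x) \subset \cT_{\cX}(x)$ for $\cX' \subset \cX$ and the resulting identity $\Jac_{\cX'} f(x)\,h = \Jac_{\cX} f(x)\,h = Jh$ on $\cT_{\cX'}(x)$. Your extra remarks on compatibility with $B$ and on comparing Jacobians through a common smooth representative only make explicit what the paper leaves implicit.
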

\begin{proof}
  Consider $\cX' \in \bbX'$. By assumption there is $\cX \in \bbX$ such that $\cX' \subset \cX$. Thus, for any $x \in \cX'$, $\cT_x \cX' \subset \cT_x \cX$. Therefore, $f$ is $C^p$ on $\cX'$ and for all $h \in\cT_x \cX' $ and $J \in D(x)$,  $\Jac_{\cX'} f(x) h =  \Jac_{\cX} f(x)h = Jh$.
\end{proof}
\begin{lemma}\label{lm:common_whitney}
  Let $D_1, D_2 :  \bbR^{d} \rightrightarrows \bbR^{m \times d}$ be two definable maps such that $(f,D_1)$ and $(f, D_2)$ admit a definable $C^p$ variational stratification. Then, $(f, D_1 \cup D_2)$ admits a definable $C^p$ variational stratification.
\end{lemma}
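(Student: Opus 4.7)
The plan is to build a common Whitney stratification that is simultaneously compatible with the stratifications provided by the hypotheses for $D_1$ and $D_2$, and then invoke Lemma~\ref{lm:strat_adapt} to transfer the variational stratification property of each pair onto this common refinement.

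More precisely, let $\bbX_1 = (\cX^1_i)$ and $\bbX_2 = (\cX^2_j)$ be the definable $C^p$ Whitney stratifications witnessing that $(f,D_1)$ and $(f,D_2)$ admit a definable variational stratification. Applying Proposition~\ref{prop:whitney_strat} to the finite family of definable sets $\{\cX^1_i\} \cup \{\cX^2_j\} \cup \{B\}$, I obtain a definable Whitney $C^p$ stratification $\bbX = (\cX_k)$ of $\bbR^d$ that is compatible with all strata of $\bbX_1$, all strata of $\bbX_2$, and with $B$.

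By Lemma~\ref{lm:strat_adapt} applied twice, $\bbX$ satisfies the requirements of Definition~\ref{def:var_strat} both for the pair $(f,D_1)$ and for the pair $(f,D_2)$. Hence for any stratum $\cX \in \bbX$ with $\cX \subset B$ and any $x \in \cX$, we have
\begin{equation*}
  D_1(x) \subset \Jac_{\cX} f(x) + \cR^m_{\cX}(x)
  \quad \text{and} \quad
  D_2(x) \subset \Jac_{\cX} f(x) + \cR^m_{\cX}(x),
\end{equation*}
so that $(D_1 \cup D_2)(x) = D_1(x) \cup D_2(x)$ is contained in $\Jac_{\cX} f(x) + \cR^m_{\cX}(x)$. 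Since $D_1 \cup D_2$ is definable as the union of two definable set-valued maps, this shows that $(f, D_1 \cup D_2)$ admits a definable $C^p$ variational stratification witnessed by $\bbX$. There is no real obstacle here; the only ingredient is the existence of a common refinement, which is exactly what Proposition~\ref{prop:whitney_strat} grants in the definable setting.
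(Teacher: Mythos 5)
Your proof is correct and follows essentially the same route as the paper: take a common Whitney $C^p$ refinement of the two witnessing stratifications (which exists by Proposition~\ref{prop:whitney_strat} in the definable setting) and apply Lemma~\ref{lm:strat_adapt} to each pair. You merely spell out the final inclusion for the union more explicitly than the paper does.
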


\begin{proof}
  Let $\bbX_1$ (respectively $\bbX_2$) be the $C^p$ Whitney stratification of $\bbR^d$ associated with $D_1$ (respectively $D_2$). By definability, there is $\bbX_3$ a Whitney $C^p$ stratification that is compatible with $\bbX_1$ and $\bbX_2$. This implies the statement by Lemma~\ref{lm:strat_adapt}.
\end{proof}

From Definition~\ref{def:var_strat} we immediately obtain that variational stratifications are stable by convex closure. We recall that for a set $A$, $\conv(A)$ denote its convex hull.
\begin{lemma}
  Assume that the pair $(f,D)$ admits a $C^p$ variational stratification. Define
  \begin{equation*}
    \conv D : x \rightrightarrows \conv (D(x))\, ,
  \end{equation*}
  Then, the pair $(f, \conv(D))$ admits a $C^p$ variational stratification. 
\end{lemma}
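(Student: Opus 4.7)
The plan is to show that the \emph{same} stratification $\bbX$ witnessing the variational stratification of $(f,D)$ also witnesses it for $(f,\conv D)$. The key observation is that for each stratum $\cX \in \bbX$ and each $x \in \cX \subset B$, the right-hand side of the inclusion in~\eqref{eq:def_max_cons},
\[
\Jac_{\cX} f(x) + \cR^m_{\cX}(x),
\]
is an affine subspace of $\bbR^{m\times d}$. Indeed, $\cR^m_{\cX}(x) = \{R \in \bbR^{m\times d} : \cT_x\cX \subset \ker R\}$ is cut out by linear equations (namely $Rh = 0$ for $h$ ranging over a basis of $\cT_x \cX$), hence is a linear subspace; translating it by the fixed matrix $\Jac_{\cX} f(x)$ yields an affine subspace.

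From here the argument is immediate. Fix $x \in \cX \subset B$. By hypothesis $D(x) \subset \Jac_{\cX} f(x) + \cR^m_{\cX}(x)$. Since the latter is convex,
\[
\conv D(x) \subset \conv\bigl(\Jac_{\cX} f(x) + \cR^m_{\cX}(x)\bigr) = \Jac_{\cX} f(x) + \cR^m_{\cX}(x),
\]
which is exactly the variational stratification condition for $(f,\conv D)$ with respect to the same stratification $\bbX$. Compatibility of $\bbX$ with $B$ and $C^p$-smoothness of $f$ on each stratum are inherited from the assumption on $(f,D)$, so no further work is required.

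I do not anticipate any obstacle: the entire content is that the target set in~\eqref{eq:def_max_cons} is already convex, so taking the convex hull on the left-hand side of the inclusion changes nothing. (Had the lemma asked for a \emph{definable} variational stratification one would additionally need that $\conv D$ is definable, which follows from definability of $D$ together with the stability of o-minimal structures under finitely many convex combinations; but as stated this is not needed.)
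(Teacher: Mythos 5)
Your proof is correct and is exactly the argument the paper has in mind: the paper states this lemma without proof, remarking only that it follows immediately from Definition~\ref{def:var_strat}, and the reason is precisely your observation that $\Jac_{\cX} f(x) + \cR^m_{\cX}(x)$ is an affine (hence convex) set, so the inclusion survives taking convex hulls with the same stratification. Nothing further is needed.
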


Rows of conservative mappings are conservative fields. We have an analogous result for variational stratifications.
\begin{lemma}\label{lm:cons_map_fiel_var}
  Denote $f = (f_1, \ldots, f_m)$ and let $D: \bbR^{d} \rightrightarrows \bbR^{m \times d}$ be such that $(f,D)$ admits a $C^p$ variational stratification. For $1\leq i \leq m$, denote $D_i$ the $i$-th row of $D$.
  Then, $(f_i, D_i)$ admits a $C^p$ variational stratification.

  Conversely, if for every $i \leq m$, $(f_i, D_i)$ admits a definable $C^p$ variational stratification, then $(f,[D_1, \ldots, D_m]^{\top})$ admits a definable $C^p $variational stratification.
\end{lemma}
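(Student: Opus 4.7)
The plan is to exploit a direct structural description of the residual spaces $\cR^m_{\cX}(x)$, namely that a matrix $R \in \bbR^{m \times d}$ lies in $\cR^m_{\cX}(x)$ if and only if each of its $m$ rows lies in $\cN_{\cX}(x)$. Indeed, $\cT_{\cX}(x) \subset \ker R$ just means $R h = 0$ for every tangent direction $h$, which is row-wise orthogonality to $\cT_\cX(x)$. Combined with the convention $\Jac_{\cX} f(x) = [\nabla_{\cX} f_1(x), \ldots, \nabla_{\cX} f_m(x)]^{\top}$ from the preliminaries, the variational stratification inclusion~\eqref{eq:def_max_cons} for $(f,D)$ decouples into $m$ row-wise inclusions.

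For the forward direction, I would simply take the $C^p$ Whitney stratification $\bbX$ guaranteeing the variational stratification of $(f,D)$ and reuse it for each $(f_i, D_i)$. Fix $x \in \cX \subset B$ and $J \in D(x)$; write $J = \Jac_{\cX} f(x) + R$ with $R \in \cR^m_\cX(x)$. By the row decomposition above, the $i$-th row $J_i$ equals $\nabla_{\cX} f_i(x)^{\top} + u_i^{\top}$ for some $u_i \in \cN_{\cX}(x)$, so $D_i(x) \subset \nabla_{\cX} f_i(x) + \cN_{\cX}(x)$, which is exactly the variational stratification condition for $(f_i, D_i)$ (using the real-valued convention noted in the earlier remark). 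Since $\bbX$ was already $C^p$ Whitney and $f_i$ is $C^p$ on each stratum because $f$ is, nothing new is needed.

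For the converse, the only point to address is that a priori the stratifications $\bbX^{(1)}, \ldots, \bbX^{(m)}$ associated with the $m$ pairs $(f_i, D_i)$ are different. By definability of each $D_i$ and $f_i$, I invoke Proposition~\ref{prop:whitney_strat} (in the same spirit as Lemma~\ref{lm:common_whitney}) to produce a definable $C^p$ Whitney stratification $\bbX$ of $\bbR^d$ compatible with $B$ and with every $\bbX^{(i)}$. Lemma~\ref{lm:strat_adapt} then shows that $\bbX$ realizes the variational stratification of each $(f_i, D_i)$ simultaneously. Fix $x \in \cX \subset B$ and $J \in [D_1, \ldots, D_m]^{\top}(x)$, meaning $J$ has rows $J_i \in D_i(x)$. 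By the variational stratification of $(f_i, D_i)$, $J_i = \nabla_{\cX} f_i(x)^{\top} + u_i^{\top}$ with $u_i \in \cN_{\cX}(x)$; stacking yields $J = \Jac_{\cX} f(x) + [u_1, \ldots, u_m]^{\top}$ with $[u_1, \ldots, u_m]^{\top} \in \cR^m_{\cX}(x)$, which is exactly~\eqref{eq:def_max_cons} for $(f, [D_1, \ldots, D_m]^{\top})$.

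There is no substantive obstacle here: the claim is essentially an unfolding of Definition~\ref{def:var_strat} row by row. The only mildly non-trivial ingredient is the common refinement of stratifications in the converse, which is standard for definable sets via Proposition~\ref{prop:whitney_strat} and Lemma~\ref{lm:strat_adapt}.
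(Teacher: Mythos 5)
Your proposal is correct and follows essentially the same route as the paper: the forward direction reuses the given stratification and reads off the row-wise decoupling of the inclusion~\eqref{eq:def_max_cons} (which the paper leaves as ``obvious''), and the converse takes a common definable refinement of the $m$ stratifications via Lemma~\ref{lm:strat_adapt} and stacks the rows. No gaps.
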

\begin{proof}
  If $\cX \subset \bbR^d$ is such that $f_{|\cX}$ is $C^p$, then $\Jac_{\cX} f(x) = [\nabla_{\cX} f_1(x), \ldots, \nabla_{\cX} f_m(x)]^{\top}$ and the first point is obvious.
  
  To prove the second point denote $\bbX_i$, the definable $C^p$ stratification of $(f_i, D_i)$. By definability, there is $\bbX$ a Whitney $C^p$ stratification compatible with $\bbX_1, \ldots, \bbX_m$. For any $x \in \cX \in \bbX$ and $h \in \cT_{\cX}(x)$, $D(x)h = [D_1(x), \ldots, D_m(x) ]^{\top} h = [\nabla_{\cX} f_1(x), \ldots, \nabla_{\cX} f_m(x)]^{\top} h = \Jac_{\cX} f(x) h$.
\end{proof}

Finally, the notion of definable variational stratification is stable through compositions.
\begin{proposition}\label{pr:comp_var}
  Consider a definable set $B_g \subset \bbR^m$ and a function $g: B_g \rightarrow \bbR^l$ such that $f(B) \subset B_g$. Consider $D_f: \bbR^d \rightrightarrows \bbR^{m \times d}$ and $D_g: \bbR^{m} \rightrightarrows \bbR^{l \times m}$ two definable set-valued maps such that $(f, D_f)$ and $(g,D_g)$ admit definable $C^p$ variational stratifications. Defining  $D_g D_f : \bbR^{d} \rightrightarrows \bbR^{l \times d}$ as
  \begin{equation*}
   D_g D_f :x \rightrightarrows \{ J_g J_f : J_f \in D_f(x) \, , J_g \in D_g(f(x)) \}\, ,
  \end{equation*} 
the pair $(g\circ f, D_g D_f)$ admits a $C^p$ variational stratification.
\end{proposition}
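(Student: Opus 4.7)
The strategy is to construct a single Whitney $C^p$ stratification of $\bbR^d$ that simultaneously refines the variational stratifications of $f$ and $g$, and on which the classical chain rule applies stratum by stratum. Let $\bbX_f$ (respectively $\bbX_g$) denote the definable $C^p$ variational stratification of $(f,D_f)$ (respectively of $(g,D_g)$) provided by Definition~\ref{def:var_strat}. The first step is to apply Proposition~\ref{prop:whitney_func} to the definable function $f$, using the remark following it to handle the possible non-closedness of $B$, in order to obtain a pair $(\bbX,\bbF)$ of definable Whitney $C^p$ stratifications such that: $\bbX$ stratifies $\bbR^d$ and is compatible with the strata of $\bbX_f$ and with $B$; $\bbF$ stratifies $\bbR^m$ and is compatible with the strata of $\bbX_g$ and with $B_g$; and for every $\cX \in \bbX$ with $\cX \subset B$, $f|_\cX$ is $C^p$ of constant rank with $f(\cX) \in \bbF$.

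I would then verify that $\bbX$ is a variational stratification for $(g \circ f, D_g D_f)$. Fix $\cX \in \bbX$ with $\cX \subset B$, and set $\cY := f(\cX) \in \bbF$. By compatibility there exist $\cX^f \in \bbX_f$ and $\tilde{\cY} \in \bbX_g$ with $\cX \subset \cX^f \subset B$ and $\cY \subset \tilde{\cY}$; moreover, $\cY \subset f(B) \subset B_g$ together with the compatibility of $\bbX_g$ with $B_g$ forces $\tilde{\cY} \subset B_g$. Smoothness of $g \circ f$ on $\cX$ then follows by composing a $C^p$ representative of $f|_{\cX^f}$ near any $x \in \cX$ with a $C^p$ representative of $g|_{\tilde{\cY}}$ near $f(x)$.

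For the variational inclusion, I would fix $x \in \cX$, $h \in \cT_\cX(x)$, and an arbitrary element $J = J_g J_f \in D_g D_f(x)$ with $J_f \in D_f(x)$ and $J_g \in D_g(f(x))$. Using $\cT_\cX(x) \subset \cT_{\cX^f}(x)$ and the agreement of the restrictions of $f$, the variational property of $D_f$ yields
\[
J_f h \;=\; \Jac_{\cX^f} f(x) h \;=\; \Jac_\cX f(x) h \;=\; \dif(f|_\cX)(x)[h].
\]
By a standard curve argument, $f$ sends any curve in $\cX$ through $x$ into $\cY \subset \tilde{\cY}$, so this vector lies in $\cT_{\tilde{\cY}}(f(x))$. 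The variational property of $D_g$ at $f(x)$ then gives
\[
J h \;=\; J_g(J_f h) \;=\; \Jac_{\tilde{\cY}} g(f(x))\,\Jac_\cX f(x) h,
\]
which the smooth chain rule (applied to the representatives above) identifies with $\Jac_\cX(g \circ f)(x) h$. This establishes the inclusion $D_g D_f(x) \subset \Jac_\cX(g \circ f)(x) + \cR^l_\cX(x)$ required by Definition~\ref{def:var_strat}.

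The only delicate step is the joint construction of $(\bbX, \bbF)$ satisfying all the required compatibilities, which is handled by providing Proposition~\ref{prop:whitney_func} with the correct compatibility data: the strata of $\bbX_f$ together with $B$ on the domain side, and the strata of $\bbX_g$ together with $B_g$ on the codomain side. The key consequence, ensuring that the chain rule pulls through cleanly, is that each stratum $\cX \in \bbX$ contained in $B$ is mapped by $f$ into a single stratum $\tilde{\cY}$ of $\bbX_g$ contained in $B_g$.
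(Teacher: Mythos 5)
Your proof is correct and follows essentially the same strategy as the paper's: refine the domain stratification so that each stratum is contained in a stratum of $\bbX_f$ and is mapped by $f$ into a single stratum of $\bbX_g$, then conclude by the chain rule on strata together with the two variational inclusions. The only (immaterial) difference is the mechanism for the refinement — the paper simply restratifies $\bbR^d$ compatibly with $\bbX_f$ and the preimages $\{f^{-1}(\cM) : \cM \in \bbM_g\}$, whereas you invoke Proposition~\ref{prop:whitney_func} with the appropriate compatibility data; both yield the same key property.
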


\begin{proof}
  Let $\bbX_f = (\cX_i)$ be a stratification for $(f,D_f)$ and $\bbM_g = (\cM_i)$ be a stratification for $(g, D_g)$. We can find $\bbX'_f$ a stratification of $\bbR^d$ compatible with $\bbX_f$ and $\{ f^{-1}(\cM) : \cM \in \bbM_g\}$. Then, for any $\cX \in \bbX'_f$, there is $\cM \in \bbM_g$ such that $f(\cX) \subset \cM$. Therefore, $g \circ f$ is $C^p$ on $\cX$ and, since for any $(x,h) \in (\cX,\cT_{x} \cX)$, $\Jac_{\cX} f h \in \cT_{f(x)} \cM$, we obtain for any $J_f \in D(x)$ and $J_g \in D(f(x))$,
  \begin{equation*}
   \Jac_{\cX}  g\circ f(x) [h] =  \Jac_{\cM} g(f(x))\left[ \Jac_{\cX} f(x) [h]\right] = J_g J_f h\, .
  \end{equation*}
\end{proof}

\subsection{Closing the graph of a conservative mapping}\label{sec:clos_graph_consmpa}

\subsubsection{Closure preserves conservativity}
We first note that closing the graph of a conservative mapping, preserve conservativity. As we discuss in the Remark~\ref{rmk:proj_bolte} below this result is closely related to the projection formula of \cite{bolte2007clarke}.

In this section, we fix a set $B\subset \bbR^d$, a function $f: B \rightarrow \bbR^m$, $p\geq1$ and a mapping $D : \bbR^{d} \rightrightarrows \bbR^{m \times d}$. We recall that $\iota_{B} : \bbR^d \rightarrow \bbR \cup \{+ \infty \}$ is defined as $\iota_{B}(x) = 0$, if $x \in B$ and $\iota_B(x) = + \infty$ otherwise.

\begin{proposition}\label{prop:graph_closed}
  Assume that the pair $(f,D)$ admits a definable $C^p$ variational stratification. Define
  \begin{equation*}
    \bar{D}: x \rightrightarrows  \{J \in \bbR^{m \times d}: \textrm{ there is }(x_n,f(x_n), J_n) \rightarrow (x,f(x), J) \textrm{ with $J_n \in D(x_n)$} \}\, ,
  \end{equation*}
  and
  \begin{equation*}
    \begin{split}
    D_{\infty}: x\rightrightarrows \{ R \in\bbR^{m \times d} : &\textrm{ there is }(x_n,f(x_n), t_n, t_n J_n) \rightarrow (x,f(x), 0, R)\, ,\\
     &\textrm{ with $t_n \in \bbR$ and $J_n \in D(x_n)$}\}\, .
  \end{split}
  \end{equation*}
  Then, the pairs $(\iota_{B}, D_{\infty})$, $(f, \bar{D})$, $(f, \bar{D} + D_{\infty})$ and $(f, \conv(\bar{D} + D_{\infty}))$\footnote{For two sets $A,B$, $A+B =\{ a+b : a \in A, b \in B\}$. Note that if $\bar{D}(x) = \emptyset$, then $\bar{D}(x) + D_{\infty}(x) = \emptyset$.} admit a definable $C^p$ variational stratification.
\end{proposition}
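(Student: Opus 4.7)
I will establish the variational stratification for $\bar D$ and $D_\infty$ simultaneously by a single geometric argument based on a Whitney-(a) condition applied to the graph of $f$, and then deduce the remaining two statements by affine algebra. Since for any $\cX \in \bbX$ with $\cX \subset B$ the set $\Jac_\cX f(x) + \cR^m_\cX(x)$ is an affine subspace, it is closed under the addition of elements of $\cR^m_\cX(x)$ and under taking convex hulls, so once I have proved $\bar D(x) \subset \Jac_\cX f(x) + \cR^m_\cX(x)$ and $D_\infty(x) \subset \cR^m_\cX(x)$ for every $x \in \cX \subset B$, the statements about $(f, \bar D + D_\infty)$ and $(f, \conv(\bar D + D_\infty))$ follow at once. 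Definability of the four mappings is a routine consequence of the o-minimal stability under graph closure, projection, Minkowski sum, and Carath\'eodory's expression of the convex hull.

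\textbf{Choice of stratification.} Starting from $\bbX_0$, a definable variational stratification for $(f,D)$, I invoke Proposition~\ref{prop:whitn_graph} with input family containing $\bbX_0$ and $B$ to obtain a Whitney $C^p$ stratification $\bbX = (\cX_i)$ of $\bbR^d$ refining $\bbX_0$, together with a Whitney $C^p$ stratification $\bbG = (\cG_i)$ of $\Graph f$ such that $\Graph f|_\cX$ is exactly one stratum of $\bbG$ for every $\cX \subset B$. By Lemma~\ref{lm:strat_adapt}, $\bbX$ is still a variational stratification of $(f,D)$; I claim it witnesses the property for all four pairs in the statement.

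\textbf{The key geometric step.} Fix $y \in \cX_i \subset B$ and a sequence $(x_n)$ with $x_n \to y$ and $f(x_n) \to f(y)$; after passing to a subsequence I may assume that all $x_n$ lie in a single stratum $\cX_j$. Then $(x_n, f(x_n)) \in \cG_j$ converges to $(y, f(y)) \in \cG_i$, so the Whitney-(a) property of $\bbG$ yields $\angle(\cT_{\cG_i}(y, f(y)), \cT_{\cG_j}(x_n, f(x_n))) \to 0$. By Lemma~\ref{lm:tan_graph}, the first tangent space consists of the vectors $(h, \Jac_{\cX_i} f(y) h)$ with $h \in \cT_{\cX_i}(y)$ and the second of the vectors $(h_n, \Jac_{\cX_j} f(x_n) h_n)$ with $h_n \in \cT_{\cX_j}(x_n)$. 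Approximating $(h, \Jac_{\cX_i} f(y) h)$ using the angle condition therefore produces $h_n \in \cT_{\cX_j}(x_n)$ with $h_n \to h$ and $\Jac_{\cX_j} f(x_n) h_n \to \Jac_{\cX_i} f(y) h$. This is the only nontrivial technical step and the one I expect to be the main obstacle; once it is in place the rest is bookkeeping.

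\textbf{Concluding the inclusions.} Given $J \in \bar D(y)$, pick a witnessing sequence $J_n \in D(x_n) \subset \Jac_{\cX_j} f(x_n) + \cR^m_{\cX_j}(x_n)$ and for each $h \in \cT_{\cX_i}(y)$ choose $h_n \in \cT_{\cX_j}(x_n)$ as above. Since $h_n \in \cT_{\cX_j}(x_n)$ kills every element of $\cR^m_{\cX_j}(x_n)$, the identity $J_n h_n = \Jac_{\cX_j} f(x_n) h_n$ passes to the limit (both factors are bounded) to yield $J h = \Jac_{\cX_i} f(y) h$, hence $J \in \Jac_{\cX_i} f(y) + \cR^m_{\cX_i}(y)$. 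The same computation applied to $t_n J_n \to R$ with $t_n \to 0$ gives $R h = \lim t_n \, \Jac_{\cX_j} f(x_n) h_n = 0$, so $R \in \cR^m_{\cX_i}(y)$; combined with the vanishing of the Riemannian Jacobian of $\iota_B$ on every stratum contained in $B$, this is exactly the variational stratification for $(\iota_B, D_\infty)$. The affine and convexity observations from the first paragraph then dispose of $(f, \bar D + D_\infty)$ and $(f, \conv(\bar D + D_\infty))$.
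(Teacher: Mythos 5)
Your proof is correct, but it takes a genuinely different route from the paper's. The paper disposes of this proposition in one line by deducing it from the more general parametric result (Theorem~\ref{thm:graph_closed_param}), whose proof hinges on a Thom $(a_f)$ stratification of the projection $\Pi^a$ restricted to $\Graph f$; it only alludes to a direct argument "in the manner of" the projection formula of Bolte et al. without writing it out. What you have written is essentially that direct argument: stratify $\Graph f$ via Proposition~\ref{prop:whitn_graph} so that each $\Graph f_{|\cX}$ is a stratum, use Whitney-(a) on the graph together with Lemma~\ref{lm:tan_graph} to approximate $(h,\Jac_{\cX_i}f(y)h)$ by tangent vectors $(h_n,\Jac_{\cX_j}f(x_n)h_n)$ at the nearby strata, and pass $J_n h_n=\Jac_{\cX_j}f(x_n)h_n$ to the limit (with the $t_n\to 0$ variant handling $D_\infty$). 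The hypothesis $f(x_n)\to f(y)$, which you correctly invoke, is exactly what places the limit point on the right stratum of $\Graph f$, and the reduction of the last two pairs to affine/convexity observations is sound. The trade-off: the paper's route avoids duplicating work since the parametric theorem must be proved anyway, while your route is self-contained and more elementary, needing only Whitney-(a) rather than the Thom condition. Be aware, though, that your argument would not extend to the parametric setting of Theorem~\ref{thm:graph_closed_param}: there one must produce tangent vectors to nearby strata with \emph{vanishing $a$-component}, which Whitney-(a) alone cannot guarantee — that is precisely what the Thom $(a_f)$ condition for $\Pi^a$ supplies, and why the one-dimensionality of the parameter matters (cf.\ Remark~\ref{rm:ces_multd_field}).
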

\begin{proof}
  This can be proven in a manner similar to \cite[Proof of Proposition 4]{bolte2007clarke} but is also a consequence of the more general Theorem~\ref{thm:graph_closed_param} below.
\end{proof}
\begin{remark}
  Drawing an analogy with the case where $D$ is a subgradient of some real-valued function $f$, $\bar{D}$ can be interpreted as the limiting conservative field and $D_{\infty}$ as the horizontal one (see also Remark~\ref{rmk:proj_bolte}). We also note that if $f$ is continuous at $x$, then $\bar{D}$ admits a simpler expression
  \begin{equation*}
    \bar{D}(x) = \{ J \in \bbR^{m \times d} : \textrm{ there is }(x_n, J_n) \rightarrow (x, J) \textrm{ with $J_n \in D(x_n)$} \}\, .
  \end{equation*}
  That is $\bar{D}$ is simply the graph's closure of $D$ around $x$.
\end{remark}

\begin{remark}\label{rmk:proj_bolte}
  The celebrated projection formula of \cite{bolte2007clarke} shows that for a continuous, definable function $f: \bbR^d \rightarrow \bbR$, the Frechet (respectively the limiting, the Clarke) subgradient $\partial_F f$ (respectively $\partial_L f$, $\partial f$) admits a definable variational stratification. In the context of the previous proposition, if we denote the horizontal subgradient by $\partial_{\infty} f$, then choosing $D = \partial_F f$, we obtain $\bar{D} = \partial_L f$, $\partial_{\infty} f  \subset D_{\infty}$, and $\partial f \subset \conv(\bar{D} + D_{\infty})$.
\end{remark}
\begin{remark}\label{rmk:proof_ofprop}
Proposition~\ref{prop:defvar_normalcone} is a simple consequence of Proposition~\ref{prop:graph_closed}. Indeed,
  consider $\bbX = (\cX_i)$ a stratification of $\bbR^d$ compatible with $B$. For $x \in \cX \in \bbX$ such that $\cX \subset B$, it holds that $\cT_{\cX}(x) \subset \cT^F_{B}(x)$. Hence, $\cN^{F}_B(x) \subset \cN_{\cX}(x)$ and $\bbX$ is a $C^p$ variational stratification for the pair $(\iota_{B}, \cN^{F}_B)$. The result on $\cN^{L}_{B}$ and $\cN^{c}_{B}$ follows from Proposition~\ref{prop:graph_closed}.
\end{remark}

\subsubsection{Parametric closure}
Finally, we establish a parametrized version of Proposition~\ref{prop:graph_closed}. As discussed above, this is the main ingredient in our proof of convergence of gradients.

Setting up the stage, we consider a set $B \subset \bbR^{d+1}$, a function $f: B \rightarrow \bbR^m$ and a map $D_f: \bbR^{d+1} \rightrightarrows \bbR^{m \times (d+1)}$. For $a \in \bbR$, define 
\begin{equation*}
  B_a = \{ x: (x,a) \in B\}\, , \quad f_a(\cdot) = f(\cdot, a) \, 
\end{equation*}
and
\begin{equation*}
  D_a:= \{ J \in \bbR^{m\times d} : \textrm{there is $\tilde{J} \in D_f(x,a)$ and $v \in \bbR^m$ such that $\tilde{J} = [J, v]$}\}\, .
\end{equation*}
Finally, denote
\begin{equation*}
  F : B_0 \rightarrow \bbR^m \, ,\quad \textrm{ with } \quad F(\cdot) = f(\cdot, 0)\, .
\end{equation*} 

Note that $f_a$ can be viewed as a composition $x \mapsto (x,a) \mapsto f(x,a)$. Therefore, if $(f,D_f)$ admits a definable $C^p$ variation stratification, then by Proposition~\ref{pr:comp_var}, the same holds for the pair $(f_a,D_a)$. Thus, naturally one might wonder if the ``limit mapping" of $(D_a)_{a >0}$, when $a \rightarrow 0$, admits a $C^p$ variational stratification associated with $F$. Theorem~\ref{thm:graph_closed_param} positively answers this question. As we show below in Remark~\ref{rm:ces_multd_field}, such result does not necessarily hold when $a$ is not a one dimensional parameter. 

\begin{theorem}\label{thm:graph_closed_param}
  Assume that the pair $(f,D_f)$ admits a definable $C^p$ variational stratification.
   Define $\bar{D}_F, D_{F,\infty} : \bbR^d \rightrightarrows \bbR^{m \times d}$ as 
  \begin{equation*}
    \bar{D}_F: x \rightrightarrows  \{J \in \bbR^{m \times d}: \textrm{ there is }(x_n, a_n, f_{a_n}(x_n), J_n) \rightarrow (x,0, F(x), J) \textrm{ with $J_n \in D_{a_n}(x_n)$ } \}\, ,
  \end{equation*}
  and
  \begin{equation*}
    \begin{split}
    D_{F,\infty}: x\rightrightarrows \{ R \in\bbR^{m \times d} : &\textrm{ there is }(x_n, a_n,f_{a_n}(x_n), t_n, t_n J_n) \rightarrow (x,0, F(x), 0, R)\, ,\\
     &\textrm{ with $t_n \in \bbR$ and $J_n \in D_{a_n}(x_n)$}\}\, .
  \end{split}
  \end{equation*}
  Then, the pairs $(\iota_{B_0},D_{F,\infty})$ $(F, \bar{D}_F)$, $(F, \bar{D}_F + D_{F,\infty})$ and $(F, \conv(\bar{D}_F + D_{F,\infty}))$ admit a definable $C^p$ variational stratification.
\end{theorem}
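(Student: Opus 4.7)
The plan is to construct a single refined stratification that simultaneously witnesses the variational stratification of $(f, D_f)$ on $\bbR^{d+1}$ and a Thom $(a_g)$ condition on $\Graph f$ for the parameter projection $g(x,a,t) = a$. I would first apply Proposition~\ref{prop:whitn_graph} to $f$ to obtain compatible stratifications $\bbG_0$ of $\Graph f$ and $\bbX_0$ of $\bbR^{d+1}$ such that $\Graph f_{|\cX} \in \bbG_0$ for every $\cX \in \bbX_0$ included in $B$; I would then refine by applying Proposition~\ref{prop:f_thom} to $g$ on $\bbR^{d+1+m}$, requesting compatibility with $\bbG_0$, with $\bbX_0 \times \bbR^m$, with $\{a=0\} \times \bbR^m$, and with the stratification underlying the variational stratification of $(f, D_f)$. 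The outcome is $(\bbG, \bbX_0, \bbX)$, where $\bbX$ is the restriction of $\bbX_0$ to $\{a=0\} \simeq \bbR^d$, $\bbX_0$ still supports the variational stratification of $(f, D_f)$ by Lemma~\ref{lm:strat_adapt}, and $\bbG$ is a Thom $(a_g)$ stratification of $\Graph f$ that still contains $\Graph f_{|\cX}$ as a stratum for each $\cX \in \bbX_0$ with $\cX \subset B$.

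With this stratification in hand, fix $\cX \in \bbX$ with $\cX \subset B_0$, $x \in \cX$, and $J \in \bar{D}_F(x)$, witnessed by $(x_n, a_n, f_{a_n}(x_n), J_n) \to (x, 0, F(x), J)$ with $[J_n, v_n] \in D_f(x_n, a_n)$ for some $v_n \in \bbR^m$. Up to extraction one may assume $(x_n, a_n)$ lies in a single stratum $\cY \in \bbX_0$. The case $\cY \subset \{a = 0\}$ reduces, via the variational stratification on $\cY$, to Proposition~\ref{prop:graph_closed} applied to $(F, D_0)$. In the main case $\cY \not\subset \{a=0\}$, write $\cG_{\cY} = \Graph f_{|\cY}$ and $\cG_{\cX} = \Graph F_{|\cX}$. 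Unwinding the kernels of $\dif g$ on these two strata, the Thom $(a_g)$ property reads precisely: for every $h \in \cT_{\cX}(x) \subset \bbR^d$ there exist $(h_n, 0) \in \cT_{\cY}(x_n, a_n)$ with $h_n \to h$ such that $\Jac_{\cY} f(x_n, a_n)(h_n, 0) \to \Jac_{\cX} F(x) h$. Combining this with the variational stratification identity $[J_n, v_n](h_n, 0) = J_n h_n = \Jac_{\cY} f(x_n, a_n)(h_n, 0)$ and passing to the limit yields $Jh = \Jac_{\cX} F(x) h$, so $\bar{D}_F(x) \subset \Jac_{\cX} F(x) + \cR^m_{\cX}(x)$. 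The statement for $D_{F, \infty}$ is obtained by multiplying the same identity by $t_n$ and using $t_n \to 0$ together with the boundedness of the right-hand side to conclude $Rh = 0$, whence $R \in \cR^m_{\cX}(x)$ and $(\iota_{B_0}, D_{F,\infty})$ admits a variational stratification. The conclusions for $\bar{D}_F + D_{F,\infty}$ and $\conv(\bar{D}_F + D_{F,\infty})$ then follow by combining the two above cases with the stability under convex hulls from Section~\ref{sec:comp_prop_defvar}.

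The step I expect to be the main obstacle is the first one, namely producing a stratification that jointly encodes the variational stratification of $(f,D_f)$, the graph-of-stratum structure of $\Graph f$, and the Thom $(a_g)$ condition for the parameter projection, and verifying that all three properties survive the simultaneous refinements. The Thom ingredient is also where the one-dimensionality of $a$ is used in an essential way, since Proposition~\ref{prop:f_thom} supplies Thom stratifications only for real-valued definable functions; this is consistent with Remark~\ref{rm:ces_multd_field} mentioned in the excerpt, which already signals that the statement cannot be extended when the parameter lives in $\bbR^k$ with $k \geq 2$.
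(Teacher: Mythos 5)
Your overall strategy coincides with the paper's: combine the graph stratification of Proposition~\ref{prop:whitn_graph}, a Thom $(a_f)$ stratification of $\Graph f$ for the parameter projection, and the variational stratification of $(f,D_f)$, then transfer tangent data from the strata over $a\neq 0$ to the limit stratum via the Thom condition; the key identity $h_f^n=J_n h_x^n$ coming from the variational stratification, the limit passage, and the $t_n\to 0$ variant for $D_{F,\infty}$ are exactly the paper's computation, as is the observation that the one-dimensionality of $a$ enters through the availability of a Thom stratification for the real-valued map $\Pi^a$.

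There is, however, one concrete flaw in your first step. After refining $\bbG_0$ by Proposition~\ref{prop:f_thom} you assert that the resulting $\bbG$ ``still contains $\Graph f_{|\cX}$ as a stratum for each $\cX\in\bbX_0$''. Compatibility only guarantees that each stratum of $\bbG$ is \emph{contained in} some $\Graph f_{|\cX}$; the Thom refinement will in general split $\Graph f_{|\cX}$ into several strata, and one cannot demand both Thom and preservation of the old strata. This is harmless upstairs: for the moving points $(x_n,a_n,f_{a_n}(x_n))$ you may pass to a subsequence lying in a single stratum $\cG_1\subset\Graph f_{|\cY}$, and the inclusion $\cT_{\cG_1}\subset\cT_{\Graph f_{|\cY}}$ goes in the direction you need. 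But it breaks the argument at the limit point: the Thom condition only approximates vectors of $\ker\dif\Pi^a_{|\cG_0}(x,0,F(x))$, where $\cG_0$ is the \emph{actual} stratum of $\bbG$ through $(x,0,F(x))$; if $\cG_0\subsetneq\Graph F_{|\cX}$ this kernel may be a proper subspace of $\{(h,0,\Jac_{\cX}F(x)h): h\in\cT_{\cX}(x)\}$, so you only obtain $Jh=\Jac_{\cX}F(x)h$ for $h$ in a proper subspace of $\cT_{\cX}(x)$, which does not give $J\in\Jac_{\cX}F(x)+\cR^m_{\cX}(x)$. The fix is the one the paper uses: do not take the stratification of $B_0$ to be the restriction of $\bbX_0$ to $\{a=0\}$, but define it \emph{after} the Thom step, as any stratification compatible with the projections $\Pi^x(\cG)$ of those strata $\cG\in\bbG$ contained in $\Graph f\cap(\bbR^d\times\{0\}\times\bbR^m)$. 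Each resulting stratum $\cM$ then satisfies $\Graph f_{|\cM\times\{0\}}\subset\cG_0$ for a single $\cG_0\in\bbG$, so $(h,0,\Jac_{\cM}F(x)h)\in\cT_{\cG_0}(x,0,F(x))$ for every $h\in\cT_{\cM}(x)$ and the Thom condition applies to the whole tangent space. (A minor additional point: your reduction of the case $a_n\equiv 0$ to Proposition~\ref{prop:graph_closed} is circular as stated, since the paper derives that proposition from this theorem; the unified Thom argument covers that case directly once $\bbG$ is taken compatible with $\bbR^d\times\{0\}\times\bbR^m$.)
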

\begin{remark}
  We can view $F(x)$ as a composition of $x \mapsto (x,0) \mapsto f(x,0)$. Thus, using Proposition~\ref{pr:comp_var}, it is tempting to say that Theorem~\ref{thm:graph_closed_param} is a consequence of Proposition~\ref{prop:graph_closed}. Unfortunately, an element of $\bar{D}_F(x)$ is not necessarily a restriction of an element of $\bar{D}(x,0)$ to the first $d$ columns. For example, if $f(x,a) = 2\sqrt{a}$ and for $a \neq 0$, $D(x,a) = \{(0, a^{-1/2})\}$, then $\bar{D}(x,0) = \emptyset$, while $\bar{D}_F(x)=  \{ 0\}$.
\end{remark}

\begin{proof}
    In this proof, we denote by $\Pi^{a}: \bbR^{d+1 + m} \rightarrow \bbR$, the function $\Pi^{a}(x,a,f) = a$ and by $\Pi^{x}: \bbR^{d+1+m} \rightarrow \bbR^d$ the function $\Pi^{x}(x,a,f) = x$. Note that for any $C^p$ submanifold $\cG \subset \bbR^{d+1+m}$, $\Pi^{a}_{|\cG}$ is $C^p$, with $\dif \Pi^{a}_{|\cG}(x,a,f)[h_x, h_a, h_f] = h_a$. Therefore, 
    \begin{equation*}
      \ker \dif \Pi^a_{|\cG}(x,a,f)  = \cT_{\cG}(x,a,f) \cap \bbR^{d} \times \{ 0\} \times \bbR^m \, .
    \end{equation*}
  
  The result will be a consequence of the following claim.
  \emph{Claim: There are $\bbM, \bbX, \bbG$ definable $C^p$ stratifications of respectively $\bbR^d$, $\bbR^{d+1}$, $\bbR^{(d+1)+ m}$ such that the following holds. 1) $\bbX$ is a definable $C^p$ variational stratification of the pair $(f,D)$. 2) $\bbG$ is compatible with $\{ \Graph f_{|\cX} : \cX \in \bbX\}$ and is Thom $(a_f)$ for $\Pi^a$. 3) $\bbM$ is compatible with $B_0$, and for any $\cM \in \bbM$, such that $\cM \subset B_0$, there is $\cX \in \bbX$ and $\cG \in \bbG$, for which $\Graph f_{|\cM \times \{ 0\}} \subset \cG \subset \Graph f_{|\cX}$.}
  
  Let us first see how this establishes the theorem. Consider $x \in \cM \in\bbM$, $h \in \cT_{\cM}(x)$ and a sequence 
  $(x_n, a_n, f_{a_n}(x_n), t_n, t_nJ_n) \rightarrow (x, 0, F(x), t, R )$, with $J_n \in D_{a_n}(x_n)$. Our goal is to prove that $R h = t\Jac_{\cM} F(x) h$. Indeed, if this was proven, then choosing $t_n \equiv 1$ and since $h \in \cT_{\cM}(x)$ was arbitrary, we obtain $\bar{D}_F(x) \subset \Jac_{\cM}F(x) + \cR^{m}_{\cX}(x)$. Similarly, choosing $t_n \rightarrow 0$, we obtain $D_{F, \infty}(x) \subset \cR^{m}_{\cX}(x)$. Therefore, $\bbM$ is a $C^p$ variational stratification for $(F,D_{F,\infty})$ $(F, \bar{D}_F)$, $(F, \bar{D}_F + D_{F,\infty})$ and $(F, \conv(\bar{D}_F + D_{F,\infty})$.
  
  Since there is $\cX_0 \in \bbX$, $\cG_0 \in\bbG$ such that $\cM \times \{ 0\} \subset \cX_0$, and $\Graph f_{|\cM \times \{ 0\}} \subset \cG_0 \subset \Graph f_{|\cX_0}$, we obtain that $(h,0) \in \cT_{\cM\times \{0\}}(x,0) \subset \cT_{\cX_0}(x, 0)$ and 
  \begin{equation*}
    (h,0, \Jac_{\cM}F(x) h) = (h,0, \Jac_{\cX_0} f(x,0)[h, 0])  \subset \cT_{\cG_0}(x,0, F(x)) \cap  \bbR^{d} \times \{ 0\} \times \bbR^m\, .
  \end{equation*}
  This shows that
    $(h,0, \Jac_{\cM}F(x) h) \in \ker \dif \Pi^{a}_{|\cG_0}(x,0, F(x))$. 
  Without loss of generality, we can assume that $(x_n, a_n, f_{a_n}(x_n))$ lies in a unique stratum $\cG_1 \in \bbG$. Then, by Thom ($a_f$) property, there is a sequence $(h_x^n,h_a^n, h_f^n) \in \ker \dif \Pi^{a}_{|\cG_1}(x_n, a_n, f_{a_n}(x_n))$ such that 
    $(h_x^n, h_a^n, h_f^n) \rightarrow (h, 0, \Jac_{\cM}F(x) h)$.
  We claim that $h_f^n = J_n h_x^n$. Indeed, there is $\cX_1 \in \bbX$ such that $\cG_1 \subset \Graph f_{|\cX_1}$. Thus, $ \ker \dif \Pi^{a}_{|\cG_1}(x_n, a_n, f(x_n,a_n)) \subset \{  (h_x, 0, J_n h_x) : (h_x,0) \in \cT_{\cX_1}(x_n, a_n)\}$ 
  and $J_n h_x^n = h_f^n$ as claimed. Finally, we obtain that $
   Rh  = \lim_{n \rightarrow + \infty } t_n J_n h_x^n = \lim_{n \rightarrow + \infty} t_n h_f^n= t \Jac_{\cM} F(x) h$, 
  which establishes the theorem.
  
  \emph{Proof of the claim.}
    Consider $\bbX'$ a definable $C^p$ variational stratification of $\bbR^{d+1}$ associated to $(f,D)$. Consider $\bbX, \bbG'$ stratifications of respectively $\bbR^{d+1}$ and $\bbR^{d+1 + m}$, given by Proposition~\ref{prop:whitn_graph}, with $\bbX$ compatible with $\bbX'$. That is to say, for every $\cX \in \bbX$, there is $\cG' \in \bbG'$ such that $\Graph f_{|\cX} = \cG'$. By Proposition~\ref{prop:f_thom} there is $\bbG$ a Thom $(a_f)$ stratification for $\Pi^{a}$, compatible with $\bbG'$ and $\bbR^{d} \times \{ 0\} \times \bbR^{m}$. Finally, consider
    \begin{equation*}
      \tilde{\bbM} = \{ \Pi^{x}(\cG) \subset \bbR^d : \textrm{$\cG \in \bbG$, with $\cG \subset \Graph f \cap \bbR^{d} \times\{ 0\} \times \bbR^{m}$} \}\, 
    \end{equation*}
    and $\bbM$ a stratification compatible with $\tilde{\bbM}$.
  
    Now by construction $\bbX, \bbG$ satisfy the two first points of the claim. To show the third point, first note that any element of $\tilde{\bbM}$ is in $ B_0 = \Pi^{x}(\Graph f \cap \bbR^{d} \times\{ 0\} \times \bbR^{m} )$ and that $\tilde{\bbM}$ covers $B_0$.
  Therefore, \emph{i)} $\bbM$ is compatible with $B_0$, \emph{ii)} for any $\cM \in \bbM$, such that $\cM \subset B_0$, there is $\cG \in \bbG$, for which $\Graph f_{|\cM \times \{ 0\}} \subset \cG$. Since $\bbG$ is compatible with $\{ \Graph f_{|\cX} : \cX \in \bbX \}$, there is also $\cX \in \bbX$ such that $ \Graph f_{|\cM \times \{ 0\}} \subset \cG \subset \Graph f_{|\cX}$, which proves the claim.
\end{proof}

\begin{remark}\label{rm:ces_multd_field}
  It is important that in Theorem~\ref{thm:graph_closed_param} the parameter $a$ is one-dimensional. Indeed, consider $f : (-1,1) \times \bbR^2 \rightarrow \bbR$, with $f(x,a_1, a_2) \equiv 0$ and
  \begin{equation*}
    D(x,a_1, a_2)  =\begin{cases}\left \{ \left(1, \frac{a_2(a_1^2-a_2^2)}{2(a_1^2 + a_2^2)^2}, \frac{a_1(a_2^2-a_1^2)}{2(a_1^2 + a_2^2)^2}\right) \right \}
       \quad &\textrm{ if $(a_1, a_2) \neq (0,0)$ and $x = \frac{a_1 a_2}{2(a_1^2 + a_2^2)}$}\, , \\
       \{(0,0,0) \} \quad &\textrm{ otherwise}\, .
    \end{cases}
  \end{equation*} 
  Let $\cX_1 = (-1,1) \times \{ 0\} \times \{ 0\}$, $\cX_2 = \{(x, a_1, a_2) : 2x = a_1 a_2/(a_1^2 + a_2^2) \, , (a_1, a_2) \neq (0,0)\,, |x|<1 \}$ and $\cX_3 = \bbR^3 \backslash (\cX_1 \cup \cX_2)$. Then, $\cX_1,\cX_2$ is part of a definable $C^{p}$ (for any $p$) variational stratification of $(f,D)$. Through simple computations we obtain, for $x \in(0,1)$, $\bar{D}_F(x) = \{0, 1 \}$. Obviously, $\bar{D}_F(x)$ is not a conservative field of $F \equiv 0$. 
\end{remark}

\section{Definable limits of conservative fields}\label{sec:def_limit}
In the following, we fix an o-minimal structure $\cO$. Definable will always mean definable in $\cO$.

  Let us fix a set $B \subset \bbR^{d+1}$, a function $f: B \rightarrow \bbR^{m}$ and a set-valued map $D: \bbR^d \times \bbR\backslash \{0 \} \rightrightarrows \bbR^{m \times d}$. For every $a \in \bbR$, we denote 
  \begin{equation*}
  B_a = \{ x: (x,a) \in B\}\, , \quad f_a(\cdot) = f(\cdot, a) \, ,
  \end{equation*}
  and for $a \neq 0$, 
  \begin{equation*}
   D_a(\cdot) = D(\cdot, a)\, .
  \end{equation*}
  Furthermore, we define $F: B_0 \rightarrow \bbR^m$ as $F = f_0$. We think of $(f_a)_{a >0}$ as a parametrized family of functions and $(D_a)_{a >0}$ as family of corresponding conservative mapping. We will work under the following assumption.
 \begin{assumption}\label{hyp:new_proof}\phantom{=}
   \begin{enumerate}[i)]
     \item The mappings $f,D$ and the set $B \subset \bbR^{d+1}$ are definable.
     \item For every $a\neq 0$, $(f_a, D_a)$ admits a definable $C^p$ variational stratification.
    \end{enumerate}
 \end{assumption}
Let us emphasize that if for every $a$, $D_a \equiv \partial f_a$, with $\partial f_a$ denoting the Clarke subgradient, then the set-valued map $D$ is definable as soon as $f$ is. We record this fact in the next lemma, the proof of which is given in Appendix~\ref{sec:rem_pfs}.
\begin{lemma}\label{lm:subg_encompas}
  Assume that $B = \bbR^d \times [0, a_0]$, $f: B \rightarrow \bbR$ is definable and for every $a \in (0, a_0]$, $f_a$ is continuous and $D_a \equiv \partial f_a$. Then, $D$ is definable.
\end{lemma}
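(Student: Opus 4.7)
The plan is to verify that each of the elementary operations used in the construction of the Clarke subgradient $\partial f_a$ preserves definability, when applied parametrically in $a$, starting from the jointly definable function $f$. Since $f$ is continuous (but not necessarily locally Lipschitz), one must use the general definition $\partial f_a = \overline{\conv}(\partial_L f_a + \partial^{\infty} f_a)$ introduced earlier in the text.

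First, I would establish definability of the parametric Fr\'echet graph
\begin{equation*}
\Gamma_F := \{(x, a, v) \in \bbR^d \times (0, a_0] \times \bbR^d : v \in \partial_F f_a(x)\}.
\end{equation*}
The key point is the quantitative reformulation $v \in \partial_F f_a(x)$ iff
\begin{equation*}
\forall \epsilon > 0,\ \exists \delta > 0,\ \forall y \in \bbR^d,\ \|y - x\| < \delta \implies f(y,a) - f(x,a) - v^\top(y-x) \geq -\epsilon\|y - x\|.
\end{equation*}
Because $f$ is definable, the innermost condition cuts out a definable subset of $\bbR^{d} \times (0,a_0] \times \bbR^d \times \bbR^d \times \bbR_{>0}^2$; the two $\forall$ and one $\exists$ quantifiers amount to (complements of) projections and, by axioms (ii) and (iv) of Definition~\ref{def:o-min}, preserve definability. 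Hence $\Gamma_F$ is definable.

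Next, I would apply the same philosophy step by step. The parametric limiting subgradient graph is
\begin{equation*}
\Gamma_L := \bigl\{(x, a, v) : \forall \eta >0,\ \exists (x', v'),\ \|x - x'\| + \|v - v'\| < \eta \text{ and } (x', a, v') \in \Gamma_F \bigr\},
\end{equation*}
and the horizontal one $\Gamma_\infty$ admits an analogous first-order description using a vanishing scaling parameter $t > 0$; both are definable. The fibrewise Minkowski sum $\Gamma_L + \Gamma_\infty$ (in the $v$-coordinate, at fixed $(x,a)$) is a projection of $\Gamma_L \times \Gamma_\infty$ under $(v_1,v_2)\mapsto v_1+v_2$, hence definable. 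By Caratheodory's theorem, the fibrewise convex hull
\begin{equation*}
\{(x,a, \textstyle\sum_{i=0}^{d}\lambda_i v_i) : \lambda_i \geq 0,\ \sum_i \lambda_i = 1,\ (x,a,v_i) \in \Gamma_L + \Gamma_\infty\}
\end{equation*}
is again a projection of a definable set, and a final fibrewise topological closure (expressible via an $\eta$-quantifier as above) yields a definable set. Restricting $a$ to $(0, a_0]$ gives exactly $\Graph D$, so $D$ is definable.

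The only real care point is notational: at every stage, limits and closures must be taken with $a$ held fixed, so all $\epsilon$/$\eta$-quantifiers range only over the $(x,v)$ coordinates. Once this is kept straight, the argument is purely a bookkeeping application of the o-minimal axioms, and there is no substantive obstacle.
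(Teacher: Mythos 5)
Your proposal is correct and follows essentially the same route as the paper: express the Fr\'echet subgradient graph by a first-order formula in the jointly definable $f$, then observe that the limiting/horizontal subgradients, Minkowski sum, convex hull (via Carath\'eodory) and closure are likewise first-order definable operations. The paper's proof is just a terser version of this, writing out only the Fr\'echet formula and asserting the remaining steps are ``defined through a first-order formula''; your write-up fills in exactly those details.
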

Therefore, Assumption~\ref{hyp:new_proof} encompasses the setting presented in the introduction. Nevertheless, we work in a more general setting, where, besides Assumption~\ref{hyp:new_proof} we do not impose any regularity, such as continuity, on $f$ and $D$.

Consider the set-valued maps $D_F, D_{F, \infty} : \bbR^d \rightrightarrows \bbR^{m \times d}$ defined as
  \begin{equation}\label{eq:def_DF}
    \begin{split}
    D_F(x) := \{J \in \bbR^{m \times d}: &\textrm{ there is } (x_n, a_n, f_{a_n}(x_n), J_n) \rightarrow (x,0, F(x), J)\, , \textrm{ with } J_n \in D_{a_n}(x_n)\}         
  \end{split}
  \end{equation}
  and 
  \begin{equation}\label{eq:def_DF_inf}
    \begin{split}
    D_{F, \infty} (x) = \{ R \in \bbR^{m \times d} : &\textrm{ there is } (x_n,a_n, f_{a_n}(x_n), t_n, t_nJ_n) \rightarrow (x,0, F(x), 0, R)\, ,\\
    &\textrm{ with } J_n \in D_{a_n}(x_n)\textrm{ and } t_n J_n \rightarrow R\}\, .
  \end{split}
  \end{equation}
  As in Section~\ref{sec:clos_graph_consmpa}, we note that if $f$ is continuous at $(x,0)$, then in the definition of $D_F(x), D_{\infty}(x)$ we no longer need to check $f_{a_n}(x_n) \rightarrow F(x)$. However, a priori, we do not impose any continuity assumption on $f$ around $\bbR^{d} \times \{ 0\}$. 

  As we establish in the next lemma, proved in Appendix~\ref{sec:rem_pfs}, due to Assumption~\ref{hyp:new_proof}, $F, D_F$ and $D_{F, \infty}$ are definable. 
  \begin{lemma}\label{lm:definab_DFinf}
    Under Assumption~\ref{hyp:new_proof}, the mappings $D_F,D_{F, \infty}$ and $F$ are definable.
  \end{lemma}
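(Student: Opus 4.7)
The plan is to write each of $\Graph F$, $\Graph D_F$, and $\Graph D_{F,\infty}$ as the output of a finite sequence of definability-preserving operations applied to $\Graph f$ and $\Graph D$. The only tools needed are those guaranteed by Definition~\ref{def:o-min}: finite intersection, Cartesian product, and coordinate projection; together with the standard fact that the topological closure of a definable set is again definable. Only the definability statement in Assumption~\ref{hyp:new_proof}(i) is used; part~(ii) plays no role here.

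For $F$, I would observe that
\begin{equation*}
  \Graph F = \left\{(x,y) : (x,0,y) \in \Graph f\right\}
\end{equation*}
is the image of $\Graph f \cap (\bbR^d \times \{0\} \times \bbR^m)$ under the projection $(x,a,y) \mapsto (x,y)$, which is immediately definable. For $D_F$, the key step is to introduce the auxiliary definable set
\begin{equation*}
  \mathcal{E} := \left\{(x,a,y,J) \in \bbR^{d+1+m+md} : (x,a,y) \in \Graph f,\; (x,a,J) \in \Graph D\right\},
\end{equation*}
take its topological closure (still definable), and read off from~\eqref{eq:def_DF} that
\begin{equation*}
  \Graph D_F = \left\{(x,J) : x \in B_0, \; (x,0,F(x),J) \in \overline{\mathcal{E}}\right\}.
\end{equation*}
This last set is obtained by intersecting $\overline{\mathcal{E}}$ with the definable slice $\left\{(x,0,F(x),J) : x \in B_0,\; J \in \bbR^{m \times d}\right\}$ (definable because $F$ is) and projecting onto the $(x,J)$ coordinates. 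For $D_{F,\infty}$ I would proceed analogously, starting from the definable set $\mathcal{E}_\infty := \{(x,a,y,t,tJ) : (x,a,y,J) \in \mathcal{E},\; t \in \bbR\}$, taking its closure, slicing with $\{(x,0,F(x),0,R)\}$, and projecting onto the $(x,R)$ coordinates.

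There is no genuine obstacle; the proof is essentially bookkeeping in o-minimal stability. The only point to watch is that $f$ is not assumed continuous near $\bbR^d \times \{0\}$, so the condition $f_{a_n}(x_n) \to F(x)$ in \eqref{eq:def_DF} and \eqref{eq:def_DF_inf} cannot be silently dropped. Carrying the value $y = f_a(x)$ as an explicit coordinate inside $\mathcal{E}$ and $\mathcal{E}_\infty$, rather than attempting to reconstruct it after taking the closure, is precisely the device that encodes this convergence faithfully while keeping every intermediate set definable.
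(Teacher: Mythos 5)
Your proof is correct and takes essentially the same route as the paper's: the paper simply writes $\Graph D_F$ as a first-order formula over the definable sets $\Graph f$ and $\Graph D$ (the $\forall \varepsilon\, \exists$ quantifiers playing exactly the role of your closure-then-slice-then-project construction), and handles $D_{F,\infty}$ and $F$ the same way.
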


  Note that, depending on the regularity of $f$ near $(x,0)$, $D_F(x)$ can be empty. Nevertheless, our main theorem establishes that $(F,D_F)$ admits a variational stratification.
\begin{theorem}\label{thm:main}
  Under Assumption~\ref{hyp:new_proof}, for any $p\geq 1$, the pairs $(F,D_F)$, $(F, D_F + D_{F, \infty})$ and $(F, \conv(D_F + D_{F,\infty}))$ admit a $C^p$ variational stratification.
\end{theorem}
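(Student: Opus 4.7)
My strategy is to reduce Theorem~\ref{thm:main} to Theorem~\ref{thm:graph_closed_param} by constructing a definable extension $D_f : \bbR^{d+1} \rightrightarrows \bbR^{m \times (d+1)}$ of the family $(D_a)_{a \neq 0}$ with two properties: (i) $(f, D_f)$ admits a definable $C^p$ variational stratification; (ii) for every $a \neq 0$, the column-projection $\{J \in \bbR^{m \times d} : \exists\, [J, v] \in D_f(x, a)\}$ equals $D_a(x)$. Once such $D_f$ is built, applying Theorem~\ref{thm:graph_closed_param} to $(f, D_f)$ yields a definable $C^p$ variational stratification for $(F, \bar D_F)$, $(F, \bar D_F + D_{F, \infty})$ and $(F, \conv(\bar D_F + D_{F, \infty}))$; by (ii), the map $D_a$ appearing in Theorem~\ref{thm:graph_closed_param} coincides with ours, and a direct comparison of limits identifies the $\bar D_F, D_{F, \infty}$ of that theorem with our $D_F, D_{F, \infty}$ from~\eqref{eq:def_DF}--\eqref{eq:def_DF_inf}, yielding the claim.

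The construction of $D_f$ requires a single technical ingredient: a definable $C^p$ Whitney stratification $\bbX$ of $\bbR^{d+1}$, compatible with $B$ and $\bbR^d \times \{0\}$, on which $f$ is $C^p$ and $\Pi^a$ has constant rank (so each nonempty slice $\cX^{(a)} := \cX \cap (\bbR^d \times \{a\})$ is a $C^p$ submanifold of $\bbR^d$), satisfying the \emph{slice-variational property}: for every $\cX \in \bbX$ with $\cX \subset B$ and every $a \neq 0$ with $\cX^{(a)} \neq \emptyset$,
\[
D_a(x) \subset \Jac_{\cX^{(a)}} f_a(x) + \cR^m_{\cX^{(a)}}(x) \qquad \text{for all } x \in \cX^{(a)}.
\]
Given such $\bbX$, I would set $D_f(x, 0) := \emptyset$, and for $(x, a) \in \cX \in \bbX$ with $a \neq 0$ and $J \in D_a(x)$, put $[J, v] \in D_f(x, a)$ for every $v \in \bbR^m$ if $\cX \subset \bbR^d \times \{a\}$, and $[J, v_J] \in D_f(x, a)$ with $v_J := (\Jac_\cX f(x, a) w - J u)/\alpha$ if $\Pi^a|_\cX$ is a submersion, for any $w = (u, \alpha) \in \cT_\cX(x, a)$ with $\alpha \neq 0$. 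The slice-variational property ensures both the independence of $v_J$ from the choice of $w$ and the variational identity $[J, v_J](h_x, h_a) = \Jac_\cX f(x, a)(h_x, h_a)$ for all $(h_x, h_a) \in \cT_\cX(x, a)$: decomposing $(h_x, h_a) = (h_x - (h_a/\alpha) u, 0) + (h_a/\alpha) w$, the horizontal component lies in $\cT_{\cX^{(a)}}(x) \times \{0\}$, where the slice property pins down the value of the relevant linear map. Hence $\bbX$ is a definable $C^p$ variational stratification of $(f, D_f)$, and property (ii) is built into the construction.

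\textbf{Main obstacle.} The genuinely nontrivial step is the construction of $\bbX$ satisfying the slice-variational property. Assumption~\ref{hyp:new_proof}(ii) supplies, for each fixed $a \neq 0$, only \emph{some} variational stratification $\bbY^{(a)}$ of $(f_a, D_a)$, with no a priori uniformity in $a$. I would tackle this by a definable iterated-refinement scheme: the slice-variational condition at $(x, a)$ with respect to the current stratification is a definable property of $(x, a)$; starting from any Whitney stratification produced by Proposition~\ref{prop:whitney_func} with constant rank on $\Pi^a$, one adds the current slice-bad set together with the loci where the definable subspace $V(x, a) := \bigcap_{J, J' \in D_a(x)} \ker(J - J')$ changes definable type as compatibility constraints, and re-invokes Proposition~\ref{prop:whitney_func}. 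By Lemma~\ref{lm:strat_adapt}, refinements of any variational stratification remain variational; since refining shrinks the tangent spaces of the strata, the fiberwise dimension of the slice-bad set strictly decreases at each step, and the process terminates after finitely many iterations. The delicate bookkeeping is in simultaneously preserving the Whitney condition and the constant rank of $\Pi^a$ across all refinement steps, which is handled by the compatibility conventions of Proposition~\ref{prop:whitney_func}.
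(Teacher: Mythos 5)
Your proposal follows essentially the same route as the paper: the paper proves Theorem~\ref{thm:main} by first establishing Proposition~\ref{prop:proj_cons} (a definable $D_f$ on $\bbR^{d+1}$ whose column-projection contains $D_a$ and for which $(f,D_f)$ admits a definable $C^p$ variational stratification), built by exactly your iterated-refinement scheme in which a definable set of ``bad points'' strictly drops in dimension at each step (Lemmas~\ref{lm:trs_proj}, \ref{lm:false_strat} and \ref{lm:dec_badpoints}), and then applies Theorem~\ref{thm:graph_closed_param}; note that a mere inclusion $D_a(x)\subset\Pi^x(D_f(x,a))$ suffices, so your insistence on equality is unnecessary. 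The one place your sketch should be tightened is the justification of the dimension drop: it is not that ``refining shrinks tangent spaces,'' but that comparing each slice $\cX^{(a)}$ with the stratification of $(f_a,D_a)$ supplied by Assumption~\ref{hyp:new_proof}(ii) shows the slice-bad set misses every stratum of full slice dimension (the paper's Lemma~\ref{lm:false_strat}), after which the fiberwise bound is aggregated over $a$ via Lemma~\ref{lm:dim_sections}.
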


To prove Theorem~\ref{thm:main}, we actually establish a stronger statement: there is a \emph{larger} set-valued map $D_f : \bbR^{d+1} \rightrightarrows \bbR^{m\times (d+1)}$ such that $D_a(x)$ is simply the restriction of $D_f(x,a)$ onto  its first $d$ columns \emph{and} such that $(f,D_f)$ admits a definable $C^p$ variational stratification. As soon as such mapping is constructed, Theorem~\ref{thm:main} becomes a simple consequence of Theorem~\ref{thm:graph_closed_param}. To not interrupt the exposition we give the proof of this fact in Section~\ref{sec:pf_prop_proj_con}.
\begin{proposition}\label{prop:proj_cons}
  There is a definable map $D_f: \bbR^{d+1} \rightrightarrows \bbR^{m\times(d+1)}$ such that the pair $(f,D_f)$ admits a definable $C^p$ variational stratification and for every $a \neq 0$, 
  \begin{equation*}
    D_a(x) \subset \{ J \in \bbR^{m \times d} : \exists v \in \bbR^m, [J, v] \in D_f(x,a)\} \, .
  \end{equation*}
\end{proposition}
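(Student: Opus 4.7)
My plan is to construct $D_f$ from a carefully chosen Whitney $C^p$ stratification $\bbY = (\cY_j)$ of $\bbR^{d+1}$ by setting
\[
D_f(x,a) := \Jac_{\cY} f(x,a) + \cR^m_{\cY}(x,a)
\]
for $(x,a) \in \cY \in \bbY$ with $\cY \subset B$, and $D_f(x,a) := \emptyset$ otherwise. With this definition $(f, D_f)$ admits $\bbY$ as a $C^p$ variational stratification by construction, and $D_f$ is definable as soon as $\bbY$ is. The whole argument thus reduces to choosing $\bbY$ so that the inclusion $D_a(x) \subset \{J : \exists v \in \bbR^m,\ [J, v] \in D_f(x,a)\}$ holds for every $a \neq 0$.

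The sufficient requirement on $\bbY$ is that it be compatible with $B$ and with $\bbR^d \times \{0\}$, that $f$ be $C^p$ on each stratum, and that it satisfy the following \emph{uniform slicing} property: for every $\cY \in \bbY$ contained in $B \cap (\bbR^d \times (\bbR \setminus \{0\}))$ and every $a$ for which the slice $\cY_a := \{x : (x,a) \in \cY\}$ is non-empty, $\cY_a$ is a $C^p$ submanifold of $\bbR^d$ on which the variational condition for $(f_a, D_a)$ holds, i.e.\ $J h_x = \Jac_{\cY_a} f_a(x) h_x$ for all $x \in \cY_a$, $J \in D_a(x)$, $h_x \in \cT_{\cY_a}(x)$. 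Given such a $\bbY$, the required inclusion is a short tangent-space verification: decompose $\cT_{\cY}(x,a) \subset \bbR^d \times \bbR$ as $\cT_{\cY_a}(x) \times \{0\}$ plus, when non-trivial, the line spanned by some $(h^*, 1) \in \cT_{\cY}(x,a)$; uniform slicing handles vectors of the form $(h_x, 0)$, and one then picks $v \in \bbR^m$ so that $J h^* + v = \Jac_{\cY} f(x,a)(h^*, 1)$ (taking $v = 0$ when the line is absent). By linearity, $[J, v] h = \Jac_{\cY} f(x,a) h$ for every $h \in \cT_{\cY}(x,a)$, so $[J, v] \in D_f(x,a)$.

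The principal obstacle is the construction of $\bbY$ with the uniform slicing property. While Assumption~\ref{hyp:new_proof} yields, for each $a$ separately, a variational stratification of $(f_a, D_a)$, making the choice uniform in $a$ is the crux of the argument and is where the definability of the \emph{parametric} map $D$ (rather than merely each individual $D_a$) is essential. My strategy is to apply Proposition~\ref{prop:whitney_func} simultaneously to $f$, to the projection $\pi^a : (x,a) \mapsto a$, and to the definable set $\Graph D \subset \bbR^{d+1} \times \bbR^{m \times d}$, together with compatibility conditions involving $B$ and $\bbR^d \times \{0\}$. Projecting the resulting stratification back to $\bbR^{d+1}$ and refining further if necessary so that the variational condition is enforced on each slice produces the desired $\bbY$, after which the computation above concludes the proof.
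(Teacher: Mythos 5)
Your reduction is sound as far as it goes: defining $D_f=\Jac_{\cX}f+\cR^m_{\cX}$ stratum by stratum on a stratification of $\bbR^{d+1}$ enjoying the slice property of Lemma~\ref{lm:trs_proj}, together with the linear-algebra step producing $v$ from a vector $(h^*,1)\in\cT_{\cX}(x,a)$, correctly shows that everything hinges on finding a \emph{single} stratification whose slices $\cX_a$ all carry the variational inclusion for $(f_a,D_a)$. The gap is that you never construct such a stratification. Applying Proposition~\ref{prop:whitney_func} simultaneously to $f$, to $\Pi^a$ and to $\Graph D$ yields strata along which $f$ and the fibres of $D$ vary tamely, but it yields no relation between $D_a$ and $\Jac_{\cX_a}f_a$ on the slices: that relation is guaranteed by Assumption~\ref{hyp:new_proof} only through an $a$-dependent stratification $\bbX^{(a)}$ of $\bbR^d$, and since $a$ ranges over an uncountable set, no single refinement can be made compatible with all of the $\bbX^{(a)}$ at once. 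The closing phrase ``refining further if necessary so that the variational condition is enforced on each slice'' is precisely where the proof is missing: each refinement changes the slices and hence the condition to be enforced, and you give no reason why this process terminates or even makes progress.

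The paper supplies exactly the missing mechanism. It measures progress by the dimension of the bad set $\bbB(G)=\{(x,a)\in B:\ D_a(x)\not\subset\Pi^x(G(x,a))\}$: Lemma~\ref{lm:false_strat} shows that on \emph{any} definable manifold $\cM$ the set of points where the variational inclusion for $(f_a,D_a)$ fails has dimension strictly less than $\dim\cM$ (this is where the $a$-dependent stratifications are used, one slice at a time); Lemma~\ref{lm:dim_sections} upgrades this slicewise drop to a drop of $\dim\bbB$ in $\bbR^{d+1}$; and Lemma~\ref{lm:dec_badpoints} patches the candidate map on the bad strata while preserving the variational stratification property. After at most $d+1$ rounds the bad set is empty. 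Two further discrepancies with your sketch: the resulting $D_f$ is a patchwork built from several successive stratifications rather than the single formula $\Jac_{\cX}f+\cR^m_{\cX}$ you posit (although a posteriori it is contained in that formula for the final stratification), and the paper first reduces to $m=1$ and assembles the rows afterwards in the spirit of Lemma~\ref{lm:cons_map_fiel_var}. Without a termination argument of this kind your construction of the stratification remains unproved, and it is the heart of the proposition.
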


\begin{proof}[Proof of Theorem~\ref{thm:main}]
  Using Proposition~\ref{prop:proj_cons}, we note that for every $x\in \bbR^d$, $D_F(x)$ and $D_{F, \infty}(x)$ of~\eqref{eq:def_DF} and~\eqref{eq:def_DF_inf} are subsets of $\bar{D}_F(x), D_{F, \infty}(x)$ of Theorem~\ref{thm:graph_closed_param} (applied to $D_f$). Therefore, Theorem~\ref{thm:main} is just a consequence of Theorem~\ref{thm:graph_closed_param}.
\end{proof}
Clearly, if the mappings $F$ and $D_F$ are regular enough, that is $F$ is locally Lipschitz continuous and $D_F$ is a closed, nonempty valued, locally bounded map, then $D_F$ becomes a conservative mapping in the standard sense of \cite{bolte2021conservative}. A natural scenario when this is satisfied is given by the following corollary.
\begin{corollary}\label{cor:main2}
  Let Assumption~\ref{hyp:new_proof} hold, with $B= \bbR^{d} \times [0, a_0]$. Assume that for all $x \in \bbR^d$ and $a \in (0,a_0)$, $D_a(x) \neq \emptyset$ and, moreover, for any compact set $K$,
  \begin{equation*}
    \sup_{x \in K} \norm{f(x,a) - F(x)} \xrightarrow[a \rightarrow 0]{} 0
  \end{equation*}
  and 
  \begin{equation*}
    \lim_{a_0 \rightarrow 0} \sup\{ \norm{J}: J \in D_a(x)\, , \, a \leq a_0\, ,\, x \in K\} < +\infty\, .
  \end{equation*}
  Then, $D_F(x)$ can be rewritten in a simpler way:
  \begin{equation*}
    \begin{split}
      D_F(x) := \{J \in \bbR^{m \times d}: &\textrm{ there is } (x_n, a_n, J_n) \rightarrow (x,0, J)
      \, ,\textrm{ with } J_n \in D_{a_n}(x_n)\}  \, .
    \end{split}
  \end{equation*}
  And $D_F$ is conservative mapping of $F$. In particular, if $m =1$, then $D_F$ is a conservative set-valued field of $F$.
\end{corollary}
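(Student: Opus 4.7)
The plan is to apply the converse direction of Proposition~\ref{prop:cons_strat} to the pair $(F, D_F)$: Theorem~\ref{thm:main} already supplies the variational stratification of this pair, so only the regularity conditions (closedness, nonemptiness, and local boundedness of $D_F$, and the local Lipschitz property of $F$) need to be verified.

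The key preliminary step is to establish that each $f_a$ is locally Lipschitz with a constant uniform in $a$. Fix a compact convex set $K$ and let $C$ be the finite supremum of $\|J\|$ over $J \in D_a(x)$ for $x \in K$ and $a \in (0, a_0]$, after possibly shrinking $a_0$. Replace $D_a$ by its graph closure $\bar{D}_a$, whose variational stratification is preserved by Proposition~\ref{prop:graph_closed} and which is closed, nonempty-valued, and bounded by $C$. The converse of Proposition~\ref{prop:cons_strat} then identifies $\bar{D}_a$ as a conservative mapping of $f_a$, and the defining path-integral identity applied to the segment connecting any two points of $K$ yields $\|f_a(x) - f_a(y)\| \leq C \|x - y\|$. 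Passing to the uniform limit, $F$ inherits the same Lipschitz constant on $K$, hence is locally Lipschitz.

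The simplified formula for $D_F$ then follows readily: the inclusion of the original expression in the simpler one is trivial, and for the converse, given $(x_n, a_n, J_n) \to (x, 0, J)$ with $J_n \in D_{a_n}(x_n)$, uniform convergence on a compact neighborhood of $x$ gives $|f_{a_n}(x_n) - F(x_n)| \to 0$ while the continuity of $F$ from the previous step gives $F(x_n) \to F(x)$, so that $f_{a_n}(x_n) \to F(x)$, placing $J$ in the original $D_F(x)$. The remaining regularity of $D_F$ is straightforward: closedness from the sequential definition, local boundedness by $C$ inherited from $D_a$, and nonemptiness by extracting a convergent subsequence from any $J_n \in D_{a_n}(x)$ with $a_n \to 0$ (possible since $D_{a_n}(x)$ is nonempty by hypothesis and eventually contained in the ball of radius $C$). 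Proposition~\ref{prop:cons_strat} then identifies $D_F$ as a conservative mapping of $F$, which reduces to a conservative set-valued field in the case $m = 1$. The most delicate point of the argument is the uniform Lipschitz bound on $f_a$, where the graph-closure trick is essential to bypass the lack of any a priori continuity assumption on $f_a$.
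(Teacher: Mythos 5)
Your overall architecture is the intended one (the paper states this corollary without proof, as an immediate consequence of Theorem~\ref{thm:main} together with the converse direction of Proposition~\ref{prop:cons_strat}), and your verifications of closedness, nonemptiness and local boundedness of $D_F$, as well as the equivalence of the two formulas for $D_F$, are fine. The genuine gap is in the step you yourself single out as the most delicate: the uniform local Lipschitz bound on $f_a$. Your argument there is circular. The notion of a conservative mapping (Definition~\ref{def:cons_map}) is only defined for locally Lipschitz potentials, and Proposition~\ref{prop:cons_strat} --- including its converse direction --- is stated under the standing hypothesis that $f$ is definable \emph{and locally Lipschitz continuous}. So you cannot invoke that converse to declare $\bar D_a$ a conservative mapping of $f_a$ and then read the Lipschitz estimate off the path-integral identity: the applicability of that identity is exactly what you are trying to establish. (A secondary issue: without continuity of $f_a$, the map $\bar D_a$ of Proposition~\ref{prop:graph_closed} need not have a closed graph, since its definition constrains sequences by $f_a(x_n)\to f_a(x)$.)

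Moreover, no trick can close this gap under the hypotheses as you use them: a nonempty-valued, locally bounded $D_a$ together with a variational stratification of $(f_a,D_a)$ does \emph{not} force $f_a$ to be locally Lipschitz. Take $f_a$ equal to the Heaviside function for every $a$ and $D_a\equiv\{0\}$; the pair admits the variational stratification $\{(-\infty,0),\{0\},(0,\infty)\}$, all the boundedness, nonemptiness and uniform-convergence hypotheses of the corollary hold, yet $f_a$ and $F$ are not even continuous. What the argument genuinely needs is (at least) continuity of each $f_a$, as in the introduction's version of the setting. Granting that, the correct route is elementary and avoids conservativity altogether: on the union of the top-dimensional strata of the stratification of $(f_a,D_a)$ one has $\cR^m_{\cX}(x)=\{0\}$, hence $D_a(x)=\{\Jac f_a(x)\}$ and $\|\Jac f_a(x)\|\le C$ on $K$; the complement is definable of dimension $<d$, so a generic segment meets it in finitely many points, and continuity of $f_a$ lets you integrate along such segments to get $\|f_a(x)-f_a(y)\|\le C\|x-y\|$ on $K$, a bound inherited by the uniform limit $F$. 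You should either add the continuity hypothesis explicitly and argue this way, or at minimum replace the appeal to the converse of Proposition~\ref{prop:cons_strat} for $\bar D_a$ by this stratum-wise gradient argument.
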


We finish this section by three examples illustrating the necessity of various points in Assumption~\ref{hyp:new_proof}.
\begin{remark}
  Observe that it is not sufficient to have $f_a$ definable for every $a>0$. Indeed, recall the example from the introduction: $f_a(x) = a \sin(x/a)$, $f'_a(x) = \sin(x/a)$, $F \equiv 0$ and $D_F \equiv [-1,1]$. If we restrict the functions to $[-1,1]$, then for all $a$, $f_a$ is definable in the structure of subanalytic sets. Nevertheless, the assumptions of the theorem do not hold since $\Graph f=\{ (x,a,y): f_a(x) = y\} $ is not definable.  
\end{remark}
\begin{remark}
  Naturally, one might wonder if an analogous version of Theorem~\ref{thm:main} might hold if $a$ is not anymore a one-dimensional parameter. Unfortunately, considering $f$ and $D$ from Remark~\ref{rm:ces_multd_field}, we see that in this case, for every $x \in (-1,1)$, $D_F(x) = \{0, 1\}$, which is not a conservative field of $F \equiv 0$.
\end{remark}

If, in the context of Corollary~\ref{cor:main2}, for each $a$, $D_a = \partial f_a$, one might wonder if $ \conv D_F$ is included in $\partial F$? Unfortunately, the following semialgebraic example, given to the author by Edouard Pauwels, shows that this is generally not true.

\begin{example}\label{ex:lim_notclarke}
  
Consider $f_a(x) = a - |x|$ if $|x| < a$ and $f_a(x) = 0$ otherwise.
A direct computation shows that $f'_{a}(a/2) = -1$ and as a consequence $-1 \in D_F(0)$. Thus, $D_F$ has elements that are different from the Clarke subgradient of $F\equiv 0$. Note that by smoothing the corners it is easy to construct a similar example where each $f_a$ is smooth.
\end{example}

\section{Applications to smoothing methods}\label{sec:smooth}

Assume that we are interested in the optimization problem
\begin{equation*}
  \min_{x \in \bbR^d} F(x)\, ,
\end{equation*} 
where $F: \bbR^d \rightarrow \bbR$ is continuous but is neither convex nor smooth. A large body of work (see for example \cite{mayne1984nondifferential,chen2012smoothing,chen1996class,garmanjani2013smoothing,hu2012smoothing,nesterov2005smooth,qi1995globally,zang1980smoothing,burke2020subdifferential,burke2017epi,zang1981discontinuous,burke2017epi,ermoliev1995minimization}) suggests tackling this problem by designing a function $f : \bbR^d \times (0, + \infty) \rightarrow \bbR$ such that for each $a >0$, $f_a$ is continuously differentiable and $f_a \xrightarrow[a \rightarrow 0]{} F$. The general optimization procedure then find, for decreasing values of $a_k, \varepsilon_k>0$, an approximately stationary point $x_k: \norm{\nabla f_{a_k}(x_k)} \leq \varepsilon_k$.

As we will see in this section, as soon as $f$ is definable, Theorem~\ref{thm:main} shows that any accumulation point of a smoothing method is a critical point of a mapping $D_F$, for which $(f,D_F)$ admits a variational stratification. Thus, it provides  theoretical guarantees for a large class of smoothing methods.

  \begin{definition}
    Let $F: \bbR^d \rightarrow \bbR$ be continuous. We say that $f: \bbR^d \times (0, + \infty) \rightarrow \bbR$ is a smoothing function of $F$ if the following holds.
    \begin{itemize}
      \item For each $x \in \bbR^d$,
      \begin{equation}\label{eq:smoothing}
        \lim_{(y,a) \rightarrow (x,0)} f(y,a) = F(x)\, .
      \end{equation}
    \item For every $a>0$, $f_a(\cdot) = f(\cdot,a)$ is $C^1$.
  \end{itemize}
  \end{definition}
  \begin{assumption}\label{hyp:smoothing}\phantom{=}
    \begin{enumerate}[i)]
      \item The functions $f: \bbR^{d}\times (0,+\infty)\rightarrow \bbR$ and $F: \bbR^d \rightarrow \bbR$ are definable.
      \item $f$ is a smoothing function for $F$.
    \end{enumerate}
   \end{assumption}
   For $x \in \bbR^d$, define
 \begin{equation*}
  D_F(x) = \{v \in\bbR^d : \textrm{ there is }(x_k, a_k, \nabla f_{a_k}(x_k)) \rightarrow (x, 0, v) \}\, .
 \end{equation*}
 Note that due to Assumption~\ref{hyp:smoothing} we don't need to check anymore if $f_{a_k}(x_k) \rightarrow F(x)$. The following proposition is a direct consequence of Theorem~\ref{thm:main}.

\begin{proposition}\label{prop:smoothing}
  Under Assumption~\ref{hyp:smoothing} the pair $(F, D_F)$ admits a definable $C^p$ variational stratification. 
\end{proposition}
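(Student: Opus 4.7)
The plan is to reduce Proposition~\ref{prop:smoothing} to a direct application of Theorem~\ref{thm:main} by repackaging the smoothing data $(f, F)$ as a triple $(\tilde f, D, B)$ satisfying Assumption~\ref{hyp:new_proof}. I would set $B := \bbR^d \times [0, +\infty)$ and extend $f$ to $B$ by defining $\tilde f(x, 0) := F(x)$; since Assumption~\ref{hyp:smoothing} asserts that both $f$ and $F$ are definable, $\tilde f$ is definable, as its graph is a union of two definable sets. Then I would let
\[
  D: \bbR^d \times (\bbR \setminus \{0\}) \rightrightarrows \bbR^d, \qquad D_a(x) := \{\nabla f_a(x)\} \text{ if } a > 0, \qquad D_a(x) := \emptyset \text{ if } a < 0.
\]

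Next I would verify Assumption~\ref{hyp:new_proof}. Condition (i) asks that $\tilde f$, $B$, and $D$ be definable; the first two hold by construction, and for $D$ the key observation is that each partial derivative $\partial_i f_a(x)$ is a pointwise limit of the definable difference quotient $(f(x + t e_i, a) - f(x, a))/t$ as $t \to 0$, and pointwise limits of definable functions are definable wherever they exist; thus $(x, a) \mapsto \nabla f_a(x)$ is definable on $\{a > 0\}$. For condition (ii), fix $a > 0$ and $p \geq 1$. Since $f_a$ is definable, Proposition~\ref{prop:whitney_func} yields a definable $C^p$ Whitney stratification $\bbX^a$ of $\bbR^d$ on each stratum of which $f_a$ is $C^p$. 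For any stratum $\cX \in \bbX^a$ and any $x \in \cX$, the orthogonal decomposition
\[
  \nabla f_a(x) = P_{\cT_\cX(x)} \nabla f_a(x) + P_{\cN_\cX(x)} \nabla f_a(x) = \nabla_\cX f_a(x) + P_{\cN_\cX(x)} \nabla f_a(x)
\]
places $\nabla f_a(x)$ in $\nabla_\cX f_a(x) + \cN_\cX(x)$, so $\bbX^a$ witnesses the variational stratification of $(f_a, D_a)$. For $a < 0$, $B_a = \emptyset$ and the condition holds vacuously.

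Finally I would reconcile the two definitions of $D_F$. Theorem~\ref{thm:main} concludes about the $D_F$ of~\eqref{eq:def_DF}, which additionally demands $f_{a_n}(x_n) \to F(x)$ along the approximating sequence; but the smoothing property~\eqref{eq:smoothing} guarantees precisely this whenever $(x_n, a_n) \to (x, 0)$, so the $D_F$ of Proposition~\ref{prop:smoothing} coincides with the one produced by Theorem~\ref{thm:main} applied to $(\tilde f, D, B)$. The main theorem then delivers the desired definable $C^p$ variational stratification for $(F, D_F)$. The only step needing genuine thought is the definability of $D$; everything else is a clean verification against the hypotheses of Theorem~\ref{thm:main}.
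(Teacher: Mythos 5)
Your proposal is correct and follows essentially the same route as the paper, whose entire proof is the remark that the proposition ``is a direct consequence of Theorem~\ref{thm:main}''; you simply spell out the reduction (extending $f$ to $\bbR^d\times[0,\infty)$ by $F$, taking $D_a=\{\nabla f_a\}$, verifying Assumption~\ref{hyp:new_proof}, and noting that~\eqref{eq:smoothing} makes the condition $f_{a_n}(x_n)\to F(x)$ automatic). The only loosely worded step is the definability of $(x,a)\mapsto\nabla f_a(x)$ --- ``pointwise limits of definable functions are definable'' is only true for limits within a definable family over a definable parameter --- but your difference-quotient argument is exactly of that form and can be made rigorous by a first-order formula, in the spirit of the paper's Lemma~\ref{lm:subg_encompas}.
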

As a consequence of Corollary~\ref{cor:main2}, if the smoothing family is uniformly (locally) Lipschitz, then $D_F$ is a conservative gradient of $F$.
\begin{corollary}\label{cor:smoothing_conserv}
  Let Assumption~\ref{hyp:smoothing} hold and assume that for every $x \in \bbR^d$, there is a neighborhood $\cU$ of $x$ and $C>0$ such that 
  \begin{equation*}
    \lim_{a \rightarrow 0} \sup_{x \in \cU} \norm{\nabla f_a(x)} \leq C\, .
  \end{equation*}
  Then $D_F$ is a conservative set-valued field of $F$. In particular $\partial F(x) \subset \conv D_F(x)$ and if $x^*$ is a local minimum of $F$, then 
  \begin{equation*}
    0 \in \partial F(x^*) \subset \conv D_F(x^*)\, .
  \end{equation*}
\end{corollary}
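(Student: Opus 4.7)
The plan is to reduce Corollary~\ref{cor:smoothing_conserv} to Corollary~\ref{cor:main2} by specializing to $D_a(x) := \{\nabla f_a(x)\}$, and then to extract the remaining inclusions from the Bolte--Pauwels structure result quoted in Section~\ref{sec:prel_consfield}.

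First, I would verify Assumption~\ref{hyp:new_proof} on $B := \bbR^d \times [0, a_0]$ for an arbitrary fixed $a_0 > 0$. The mapping $(x,a) \mapsto \{\nabla f_a(x)\}$ is definable because its graph is obtained from the definable function $f$ via the definable operation of differentiation in the $x$-variable (one can formally write it through the definable partial derivatives of $f$). For each fixed $a > 0$, a definable $C^p$ variational stratification of $(f_a, D_a)$ is provided by any Whitney $C^p$ stratification of $\bbR^d$ on which $f_a$ is $C^p$ (Proposition~\ref{prop:whitney_func}): on any stratum $\cX$, the global gradient decomposes as $\nabla f_a(x) = \nabla_{\cX} f_a(x) + P_{\cN_{\cX}(x)} \nabla f_a(x)$, which is exactly the required inclusion $D_a(x) \subset \nabla_{\cX} f_a(x) + \cN_{\cX}(x)$.

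Next, I would verify the remaining hypotheses of Corollary~\ref{cor:main2}. Non-emptiness of $D_a(x)$ is immediate since $f_a$ is $C^1$, and the uniform local boundedness $\sup_{x \in K, a \leq a_0} \|\nabla f_a(x)\| < \infty$ is precisely the standing assumption. For uniform convergence of $f(\cdot, a)$ to $F$ on compact sets, observe that~\eqref{eq:smoothing} is equivalent to saying that the extension of $f$ by $F$ on $\{a = 0\}$ is continuous on $\bbR^d \times [0, +\infty)$; hence it is uniformly continuous on every compact cylinder $K \times [0, a_0]$, which yields $\sup_{x \in K} |f(x, a) - F(x)| \to 0$. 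Corollary~\ref{cor:main2} then gives that $D_F$, in the simplified form stated, is a conservative mapping for $F$, and since $m = 1$, a conservative set-valued field.

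For the last two claims, note that the standing bound also implies that $F$ is locally Lipschitz: the equi-Lipschitz inequality $|f_a(y) - f_a(z)| \leq C \|y - z\|$ on a neighborhood $\cU$ of $x$ passes to $|F(y) - F(z)| \leq C \|y - z\|$ via~\eqref{eq:smoothing}. Since $F$ is definable and locally Lipschitz, and $D_F$ is a conservative field of $F$, the Bolte--Pauwels result (Corollary~1 of~\cite{bolte2021conservative}, quoted in Section~\ref{sec:prel_consfield}) ensures that $x \rightrightarrows \conv D_F(x)$ is also a conservative field and $\partial F(x) \subset \conv D_F(x)$. Fermat's rule for the Clarke subgradient of a locally Lipschitz function then gives $0 \in \partial F(x^*) \subset \conv D_F(x^*)$ at any local minimum $x^*$. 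The main subtlety in this plan is keeping track of the two distinct uses of the standing hypotheses: equation~\eqref{eq:smoothing} supplies continuity of the extension (needed both for the uniform convergence of function values and for the definability/regularity of $F$), whereas the gradient bound supplies both the local Lipschitz property of $F$ and the uniform bound required by Corollary~\ref{cor:main2}; everything else is a direct verification.
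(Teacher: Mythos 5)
Your proposal is correct and follows exactly the route the paper intends: the text introduces this corollary with ``As a consequence of Corollary~\ref{cor:main2}\dots'', and your reduction --- taking $D_a=\{\nabla f_a\}$, checking Assumption~\ref{hyp:new_proof} and the boundedness/uniform-convergence hypotheses of Corollary~\ref{cor:main2}, then invoking the Bolte--Pauwels comparison with $\partial F$ and Fermat's rule --- is precisely that argument with the details filled in. (Only a cosmetic quibble: \eqref{eq:smoothing} gives continuity of the extended $f$ at points of $\bbR^d\times\{0\}$ rather than on all of $\bbR^d\times[0,+\infty)$, but together with continuity of $F$ this already yields the uniform convergence on compacts by the usual covering argument, so nothing is lost.)
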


The general optimization of a smoothing method consists in finding, for decreasing values of $a_k, \varepsilon_k>0$, an approximately stationary point $x_k: \norm{\nabla f_{a_k}(x_k)} \leq \varepsilon_k$. Thus, by construction if $x^*$ is an accumulation point of a smoothing method, then $0 \in D_F(x^*)$. Therefore, if the family $(f_a)$ is uniformly Lipschitz, then the previous corollary shows that this is a necessary condition of optimality. In fact, this remains a meaningful condition even without any lipschitzness assumption.

\begin{proposition}\label{prop:smooth_locmin}
  Assume that $f: \bbR^d \times (0,+\infty)$ is a smoothing function of $F: \bbR^d \rightarrow \bbR$. If $x^*$ is a local minimum of $F$, then $0 \in D_F(x^*)$.
\end{proposition}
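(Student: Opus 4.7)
The plan is to run a small quadratic perturbation of $f_a$ whose minimizer is forced to converge to $x^*$, and then read off a vanishing gradient of $f_a$ at that minimizer via first-order optimality.

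First, I would choose $r>0$ small enough that $F(x) \geq F(x^*)$ for all $x \in \bar B(x^*, r)$, which is possible by the local minimum assumption on $F$. For each $a > 0$, consider the perturbed function
\[ g_a(y) := f_a(y) + \|y - x^*\|^2 \]
on the compact set $\bar B(x^*, r)$. Since $f_a$ is $C^1$, hence continuous, $g_a$ attains its minimum at some $y_a \in \bar B(x^*, r)$.

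The key step is to prove that $y_a \to x^*$ as $a \to 0$. The inequality $g_a(y_a) \leq g_a(x^*) = f_a(x^*)$ rearranges to
\[ \|y_a - x^*\|^2 \leq f_a(x^*) - f_a(y_a). \]
Given any sequence $a_k \to 0$, compactness allows us to extract a subsequence along which $y_{a_k} \to y^\infty \in \bar B(x^*, r)$. The smoothing property~\eqref{eq:smoothing} then yields $f_{a_k}(x^*) \to F(x^*)$ and $f_{a_k}(y_{a_k}) \to F(y^\infty)$, so passing to the limit gives
\[ \|y^\infty - x^*\|^2 \leq F(x^*) - F(y^\infty) \leq 0, \]
where the second inequality uses the local minimum property together with $y^\infty \in \bar B(x^*, r)$. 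Hence $y^\infty = x^*$, and since every subsequence admits a further subsequence converging to $x^*$, we conclude $y_a \to x^*$.

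Finally, as soon as $a$ is small enough that $\|y_a - x^*\| < r$, the point $y_a$ lies in the interior of $\bar B(x^*, r)$, and first-order optimality yields
\[ 0 = \nabla g_a(y_a) = \nabla f_a(y_a) + 2(y_a - x^*), \]
so $\|\nabla f_a(y_a)\| = 2 \|y_a - x^*\| \to 0$. Selecting any sequence $a_k \to 0$ and setting $x_k := y_{a_k}$, the triple $(x_k, a_k, \nabla f_{a_k}(x_k))$ converges to $(x^*, 0, 0)$, proving $0 \in D_F(x^*)$. The only delicate point in the argument is the convergence $y_a \to x^*$, which requires combining compactness, the joint $(y,a)$ formulation of the smoothing condition, and the local minimum property\,---\,none of these ingredients alone would suffice.
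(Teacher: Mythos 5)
Your proof is correct, but it takes a genuinely different route from the paper's. The paper argues by contradiction via a gradient-flow descent: assuming $\norm{\nabla f_a} \geq \varepsilon$ on a small ball around $x^*$ for all small $a$, it integrates the flow $\dot{\sx}_a = -\nabla f_a(\sx_a)$ from $x^*$ until it exits the ball, obtaining a definite decrease $f_a(\sx_a(t_a)) - f_a(x^*) \leq -\varepsilon\delta'$, and then passes to the limit $a \to 0$ using~\eqref{eq:smoothing} to contradict local minimality. You instead use a direct penalization argument: minimize $f_a(y) + \norm{y-x^*}^2$ over a compact ball, show the minimizer $y_a$ converges to $x^*$ (the only place where local minimality and the joint-limit formulation of~\eqref{eq:smoothing} enter), and read off $\nabla f_a(y_a) = -2(y_a - x^*) \to 0$ from interior first-order optimality. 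Your variational argument is arguably cleaner: it avoids invoking Peano existence for the ODE with a merely continuous right-hand side, the verification that the exit time $t_a$ is finite and lies in the maximal interval of existence, and the chain-rule computation along the flow; it also produces the near-critical points constructively rather than by contradiction. Both proofs use exactly the same two ingredients (local minimality of $F$ and the joint limit $\lim_{(y,a)\to(x,0)} f(y,a) = F(x)$) and neither requires definability, so the conclusions are identical in scope.
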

\begin{proof}  
  Since $x^* \in \bbR^d$ is a local minimum of $F$ there is $\delta >0$ such that for all $x \in B(x^*, \delta)$, $F(x) \geq F(x^*)$. We will show that this implies that for any $\varepsilon >0$ and $\delta' < \delta$, there is $a < \varepsilon$, and $x \in B(x^*, \delta')$ such that $\norm{\nabla f_{a}(x)} \leq \varepsilon$. This will show that there is a sequence $(x_n, a_n, \nabla f_{a_n}(x_n)) \rightarrow (x^*, 0, 0)$ and will prove the proposition.
  
  Indeed, assume the contrary, and for any $a < \varepsilon$ and $\delta' < \delta$, consider $\sx_a : \cI_a \rightarrow \bbR^d$ a (local) solution to the gradient flow equation:
    \begin{equation}\label{eq:GF}
    \dot{\sx}_a(t) = - \nabla f_a(\sx_a(t))\, ,
  \end{equation}
  starting at $x^*$, where $\cI_a$ is the maximal interval of existence. Denoting $t_a:= \inf \{ t >0: \norm{\sx_a(t) - x} \geq \delta'\}$, we obtain $t_a \in \cI_a$ and $f_a(\sx_a(t_a)) - f_a(x^*)$ is equal to
  \begin{equation*}
     -\int_{0}^{t_a}\norm{\nabla f_a(\sx(t))}^2  \leq - \varepsilon \int_{0}^{t_a} \norm{\dot{\sx}(t)}\dif t \leq - \varepsilon\norm{\sx(t_a) - x^*} = - \varepsilon \delta'\, .
  \end{equation*}
  Letting $a \rightarrow 0$, we obtain the existence of $x \in B(x^*, \delta')$ such that $F(x') < F(x)$, which is a contradiction. 
\end{proof}
\begin{remark}
  We can see from the proof of Proposition~\ref{prop:smooth_locmin}, that it is not necessary to assume $f$ definable to show that $0 \in D_F(x^*)$. Nevertheless, this inclusion might not convey any information whatsoever. For instance, approximating $x \mapsto x$ by $ x \mapsto x + a \sin(x/a)$, we obtain that $0 \in D_F(x)$, for any $x \in [-1,1]$.
\end{remark}
To summarize this section, under Assumption~\ref{hyp:smoothing}, Theorem~\ref{thm:main} establishes that to every smoothing method we can associate a mapping $D_F: \bbR^d \rightrightarrows \bbR^d$ such that $(F,D_F)$ admits a definable $C^p$ variational stratification. Moreover, $0 \in D_F(x^*)$ as soon as $x^*$ is a local minimum. Therefore, Theorem~\ref{thm:main} gives theoretical guarantees of convergence for a large class of smoothing methods.

Let us also notice, that here the non-intrinsic properties of conservative fields are becoming apparent. Indeed, $D_F$ is implicitly defined by the design of the smoothing function $f$. Depending on the latter, $D_F$ might be just the Clarke subgradient or might be larger (or smaller). Thus, the guarantees provided by Theorem~\ref{thm:main} will be dependent on the design of the smoothing function by the practitioner. We finish this section by some examples illustrating this point.

\paragraph{Finite max-functions.}
Many interesting optimization problems consider an objective $F$ that can be represented as a composition of a smooth function with a finite max-function $p : \bbR \rightarrow \bbR$, given by 
  \begin{equation}\label{eq:fin_max_fun}
    p(t) = \max_{i \leq k} a_i t + b_i\, ,
  \end{equation}
where $a_i, b_i \in\bbR$. Typical examples are $t \mapsto \max(0,t)$ or $t\mapsto |t|$.

In \cite{chen2012smoothing,burke2013gradient} the authors suggest approximating $p$ by
\begin{equation}\label{eq:smoothing_max_finite}
  s_{p,a}(t) = \int_{\bbR} p(t-au) \varrho(u) \dif u\, ,
\end{equation}
where $\varrho : \bbR \rightarrow [0,+\infty)$ is such that $\varrho(t) = \varrho(-t)$, $\int_{\bbR} \varrho(u) \dif u = 1$ and $\int_{\bbR}|u|\varrho(u) \dif u <+\infty$. In this case, by \cite[Lemma 4.1]{burke2013gradient} it holds that $s_{p,a}(\cdot)$ is a smoothing function for $p$ and, moreover, for every $t \in \bbR$, $D_p(t) = \partial p(t)$.

Consider $F(x):= g(G(H(x)))$, where $g: \bbR^m \rightarrow \bbR$ and $H: \bbR^d \rightarrow \bbR^m$ are two $C^1$ functions and $G: \bbR^m \rightarrow \bbR^m$ defined as $G(y) = [p_1(y), \ldots, p_m(y)]$, where for each $i$, $p_i$ is a finite max-function. Such $F$ naturally appears in nonlinear complementarity problems, mixed complementarity problems or regularized minimization problems (see \cite{chen2012smoothing} for more details).

Defining, $f_a(x) = g([s_{p_1, a}(H_1(x)), \ldots,s_{p_m, a}(H_m(x)) ])$, we immediately obtain that $f_a$ is a smoothing function for $F$.

\begin{proposition}\label{prop:smoothing_maxfinite}
  Assume that $g,G, H$ and $(t,a) \mapsto [s_{p_1, a}(t), \ldots, s_{p_m,a}(t)]$ are definable in the same o-minimal structure. Then, $D_F$ is a conservative set-valued field of $F$ and for every $x \in\bbR^d$,
  \begin{equation*}
    \partial F(x) \subset \conv D_F(x)\, .
  \end{equation*}
\end{proposition}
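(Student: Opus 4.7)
The plan is to verify the hypotheses of Corollary~\ref{cor:smoothing_conserv} for the family $(f_a)$, which immediately yields both conclusions of the proposition.

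First I would check that Assumption~\ref{hyp:smoothing} is satisfied. Definability of $f \colon (x,a) \mapsto g\!\left([s_{p_1,a}(H_1(x)), \ldots, s_{p_m,a}(H_m(x))]\right)$ follows directly from the definability of $g$, $H$, and $(t,a) \mapsto [s_{p_1,a}(t), \ldots, s_{p_m,a}(t)]$, together with the stability of definability by composition. Definability of $F = g \circ G \circ H$ is analogous, since each $p_i$ is semialgebraic, hence definable in every o-minimal structure. The smoothing property~\eqref{eq:smoothing} is inherited from the fact that each $s_{p_i, a}$ converges to $p_i$ uniformly on compact sets (as recalled from \cite{burke2013gradient}), combined with the continuity of $g$ and $H$.

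Next I would establish the uniform local boundedness of $\nabla f_a$. By the chain rule,
\begin{equation*}
  \nabla f_a(x) = \Jac H(x)^\top \, \mathrm{diag}\!\left(s'_{p_i,a}(H_i(x))\right) \, \nabla g\!\left([s_{p_1,a}(H_1(x)), \ldots, s_{p_m,a}(H_m(x))]\right) .
\end{equation*}
Each $p_i$ in~\eqref{eq:fin_max_fun} is Lipschitz with some constant $L_i$, so by differentiating under the integral sign in~\eqref{eq:smoothing_max_finite}, $|s'_{p_i,a}(t)| \leq L_i$ uniformly in $(t,a)$. Since $H$ is $C^1$, on any compact $K$ both $\|\Jac H\|$ and $\|H(x)\|$ are bounded; by uniform convergence on compact sets, $[s_{p_1,a}(H_1(x)), \ldots, s_{p_m,a}(H_m(x))]$ stays in a compact set of $\bbR^m$ uniformly in $a$ small enough, on which $\nabla g$ is bounded. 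The three factors above therefore remain bounded uniformly on compact sets, which gives exactly the hypothesis $\lim_{a \rightarrow 0}\sup_{x \in \cU}\|\nabla f_a(x)\| < \infty$ of Corollary~\ref{cor:smoothing_conserv}.

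With these ingredients Corollary~\ref{cor:smoothing_conserv} applies and yields that $D_F$ is a conservative set-valued field of $F$, together with the inclusion $\partial F(x) \subset \conv D_F(x)$. I do not expect any real obstacle: the nontrivial work was already absorbed into Theorem~\ref{thm:main} and its corollary. The only point deserving care is the uniform-in-$a$ bound on $s'_{p_i,a}$, which follows from the elementary fact that convolution with a probability kernel preserves the Lipschitz constant; everything else reduces to checking that compositions of definable and smooth maps inherit the needed definability, smoothness, and boundedness properties.
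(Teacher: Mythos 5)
Your proposal is correct and follows essentially the same route as the paper: the key observation in both is that convolution with a probability kernel preserves the Lipschitz constant of each $p_i$, so $(f_a)_{a\leq a_0}$ is uniformly locally Lipschitz and Corollary~\ref{cor:smoothing_conserv} applies. You simply spell out the chain-rule bookkeeping and the verification of Assumption~\ref{hyp:smoothing} that the paper leaves implicit.
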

\begin{proof}
  Note that for every $a>0$, $s_{p_i,a}$ has the same Lipschitz constant as $p_i$. As a consequence, for any $a_0$ and any compact set $K \subset \bbR^d$, the family $(f_a)_{a \leq a_0}$ is uniformly Lipschitz continuous on $K$. The result follows from Corollary~\ref{cor:smoothing_conserv}.
\end{proof}\begin{remark}
  In \cite[Theorem 4.6]{burke2013gradient} it is established, without any assumptions on definability that $\conv D_F(x) = \partial F(x)$, under the condition that either $\nabla g (G(H(x)))$ has only positive coordinates or $\rank \Jac H(x)=m$. Comparing with this result we require definability of $F,G,H, s_{p,a}$, however, we do not need any regularity assumptions on $\nabla g$ and $\Jac H$. As we show in Remark~\ref{rmk:chen_counterx}, in general $\conv D_F$ might be larger than $\partial F$.
\end{remark}
\begin{remark}
  Even if $\varrho$ is definable it is not automatic that $(t,a) \mapsto s_{p,t}(a)$ is definable. We list here several cases in which this is indeed case.
  
  First, if $\varrho$ is subanalytic (for instance semialgebraic), then by \cite{lion1998integration}, the function $(t,a) \mapsto s_{p,t}(a)$ is definable in the structure log-exp. Thus, if $g,G,H$ are definable in this structure we obtain definability of $F$ and $(x,a) \mapsto f_a(x)$.

  Second, consider the case, where $p(t) = \max(0,t)$. Then, an appropriate choice of $\varrho$ produce (see~\cite{chen2012smoothing})
  \begin{equation}\label{eq:smooth_plus}
    s_{a}(t) = \begin{cases}
      \max(0,t) \quad &\textrm{ if $|t| \geq a/2$}\, ,\\
      \frac{t^2}{2a} + \frac{t}{2} + \frac{a}{8} \quad &\textrm{ otherwise}\, .
    \end{cases} \, ,  \quad \textrm{ or } \quad s_{a}(t) = \frac{1}{2} \left( t + \sqrt{t^2 + 4 a^2}\right)\, ,
  \end{equation}
  or  $s_{a}(t) = a \ln (1+ e^{t/a})$.
  Every such function is definable in the structure log-exp. Therefore, if $g,G,H$ are definable in log-exp, then Proposition~\ref{prop:smoothing_maxfinite} applies.
\end{remark}
\begin{remark}\label{rmk:chen_counterx}
  In the context of Proposition~\ref{prop:smoothing_maxfinite}, $\conv D_F$ might be larger than $\partial F$. Indeed, consider $F(t) = 2 \max(0,t) - \max(0, 2t) = 0$. Choosing the second smoothing function of~\eqref{eq:smooth_plus}, we obtain
  \begin{equation*}
    f_a(t) = \left( t + \sqrt{t^2 + 4 a^2}\right) -  \frac{1}{2} \left(2 t + \sqrt{4t^2 + 4 a^2}\right) =\sqrt{t^2 + 4 a^2} - \sqrt{t^2 + a^2} \, .
  \end{equation*}
  Thus, 
  \begin{equation*}
    f_a'(t) = \frac{t}{\sqrt{t^2 + 4 a^2}} - \frac{t}{\sqrt{t^2 + a^2}}\, .
  \end{equation*}
  Choosing $t_n = a_n$, we obtain $f_{a_n}'(t_n) = 5^{-1/2} - 2^{-1/2}$. Therefore, letting $a_n \rightarrow 0$, we obtain $5^{-1/2} - 2^{-1/2} \in D_F(0)$ while $\partial F(0) = \{ 0\}$.\footnote{In \cite[Theorem 1]{chen2012smoothing}, it was stated that if $p(t) = \max(0,t)$, then in the context of Proposition~\ref{prop:smoothing_maxfinite}, we have $\conv D_F(x) = \partial F(x)$. However, a flaw in the proof \cite[Theorem 1]{chen2012smoothing} was already noted \cite{burke2013gradient}. While in \cite{burke2013gradient} the authors conjecture that the claim of \cite[Theorem 1]{chen2012smoothing} remains true, the example of Remark~\ref{rmk:chen_counterx} shows that we need to impose some regularity condition on $\nabla g$ or $\Jac H(x)$ to obtain $\conv D_F(x) = \partial F(x)$.}
\end{remark}
\paragraph{Non Lipschitz optimization.}

Several optimization problems have an objective function that is non Lipschitz continuous. An example previously considered in literature is 
\begin{equation*}
  F(x) = \theta(x) + \sum_{i=1}^r \varphi_i(|b_i^{\top} x|^{q})\, ,
\end{equation*}
where $b_1, \ldots, b_r \in \bbR^d$, $\theta, \varphi_1, \ldots, \varphi_r: \bbR^d \rightarrow \bbR$ are $C^1$ functions and $ q >0$. Due to the absolute value, $F$ is non-smooth and if $q<1$, then $F$ is non Lipschitz continuous.

In \cite{chen2012smoothing,zhang2024riemannian,chen2013optimality,bian2012smoothing} the authors propose to use a smoothing function for $|\cdot|$:
\begin{equation}\label{eq:smooth_abs}
  s_a(t) = \begin{cases}
    |t| \quad &\textrm{ if $|s|>a$}\, ,\\
    \frac{t^2}{2 a} + \frac{a}{2} \quad &\textrm{ otherwise}\, .
  \end{cases}
\end{equation}
Then, $f_a(x) = \theta(x) + \sum_{i=1}^r \varphi_i (s_a(b_i^{\top} x)^q)$ is a smoothing function for $F$. 

Assuming, that $\theta$ and $\varphi_1, \ldots, \varphi_r$ are definable in some o-minimal structure, and, since $a,t \mapsto s_a(t)$ is semialgebraic, we obtain that $x,a \mapsto f_a(x)$ is definable. Therefore, by Proposition~\ref{prop:smoothing}, $(F,D_F)$ admits a definable $C^p$ variational stratification. We can notice that as in the case of finite max-functions $D_F(x)$ might contain elements outside $\partial F(x)$.

Finally, let us produce an example, where $D_F(x) = \emptyset$.
\begin{example}
  Consider $F(x) = \sign(x) \sqrt{|x|}$. We can smooth it by $f_a(x) = \sqrt{x + a}$ if $x>0$ and $f_a(x) = 2 \sqrt{a} - \sqrt{a - x}$ otherwise. 
  Therefore, when $x,a \rightarrow 0$, $f'_{a}(x)\rightarrow + \infty$ and $D_F(0) = \emptyset$.
\end{example}

\paragraph{Gradient consistency.} A desirable property of a smoothing method is the gradient consistency (see \cite{chen2012smoothing}): $D_F(x) \subset \partial F(x)$. Indeed, in this case $D_F$ does not produce any additional critical points. As we have seen in Example~\ref{ex:lim_notclarke}, such a property does not necessarily hold even if the family $(f_a)$ is uniformly Lipschitz continuous. However, using the fact that $(F,D_F)$ admits a variational stratification we immediately obtain gradient consistency \emph{almost everywhere}.

\section{Proof of Proposition~\ref{prop:proj_cons}}\label{sec:pf_prop_proj_con}

Let us first start with a definition of the dimension of a definable set. 

\begin{definition}[\cite{van1998tame,cos02}]
  Consider $A \subset \bbR^d$, definable, and $\bbX$ a definable $C^1$ Whitney stratification of $\bbR^d$, compatible with $A$. Then,
\begin{equation*}
  \dim(A) :=\max \{ \dim(\cX) : \cX \in \bbX\, , \cX \subset A \}\, .
\end{equation*}
\end{definition}
It can be proven that this definition is independent of the choice of the stratification $\bbX$.
We will need two simple properties of definable dimension.
\begin{lemma}[{\cite[Exercise 3.19]{cos02}}]\label{lm:dim_sections}
  Consider two definable sets $A, B \subset \bbR^{d \times 1}$, if for every $a \in \bbR$, 
  \begin{equation*}
    \dim \{ x \in \bbR^d: (x,a) \in A\} < \dim\{ x \in \bbR^d: (x,a) \in B\}\, ,
  \end{equation*}
  then $\dim(A) < \dim(B)$.
\end{lemma}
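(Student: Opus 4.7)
The plan is to reduce the statement to the fiber dimension formula for definable sets, a standard result in o-minimal structures (see e.g.\ \cite[Chapter 4, Proposition 1.5]{van1998tame} or \cite[Proposition 3.26]{cos02}). The formula says that for any definable $S \subset \bbR^{d+1}$, writing $S_a = \{x \in \bbR^d : (x,a) \in S\}$ and $S^{(k)} = \{a \in \bbR : \dim S_a = k\}$ for $k \in \{0, 1, \dots, d\}$, each $S^{(k)}$ is definable and
\begin{equation*}
  \dim S \;=\; \max_{0 \le k \le d}\, \bigl(k + \dim S^{(k)}\bigr),
\end{equation*}
with the usual convention $\dim \emptyset = -\infty$.

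I would first dispatch the degenerate case $A = \emptyset$: then the hypothesis forces $B_a \neq \emptyset$ for every $a \in \bbR$, hence $B \neq \emptyset$ and $\dim A = -\infty < \dim B$. So assume $A \neq \emptyset$, and pick $k^* \in \{0, \dots, d\}$ that realises the maximum, that is $\dim A = k^* + \dim A^{(k^*)}$ with $A^{(k^*)} \neq \emptyset$. The hypothesis $\dim A_a < \dim B_a$ applied at every $a \in A^{(k^*)}$ gives $\dim B_a \ge k^* + 1$, so that
\begin{equation*}
  A^{(k^*)} \;\subset\; \bigcup_{l = k^* + 1}^{d} B^{(l)}.
\end{equation*}

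From this inclusion and the monotonicity of definable dimension under finite unions, there exists $l^* \ge k^* + 1$ with $\dim A^{(k^*)} \le \dim B^{(l^*)}$. Plugging this into the fiber formula for $B$ yields
\begin{equation*}
  \dim A \;=\; k^* + \dim A^{(k^*)} \;\le\; (l^* - 1) + \dim B^{(l^*)} \;\le\; \dim B - 1,
\end{equation*}
which is the desired strict inequality. The only non-trivial ingredient is the fiber dimension formula itself; once it is granted, the argument is a short combinatorial comparison of the two maxima, with no real obstacle beyond keeping track of the $\dim \emptyset = -\infty$ convention when one of the fibres is empty.
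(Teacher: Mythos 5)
Your argument is correct: the paper does not prove this lemma (it cites it to Coste's Exercise 3.19), and your reduction to the fiber dimension formula $\dim S = \max_k (k + \dim S^{(k)})$, combined with the inclusion $A^{(k^*)} \subset \bigcup_{l > k^*} B^{(l)}$ and the finite-union dimension rule, is exactly the standard solution of that exercise. The edge cases ($\dim\emptyset = -\infty$, nonemptiness of $B_a$ forced by the hypothesis) are handled properly, so there is nothing to add.
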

\begin{lemma}[{\cite[Proposition 3.17]{cos02}}]
  Consider definable sets $A_1, \ldots, A_k \subset \bbR^d$, it holds that $\dim(A_1 \cup \ldots A_k) = \max_{i \leq k} \dim(A_i)$.
\end{lemma}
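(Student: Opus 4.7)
The plan is to reduce this to the definition of dimension via a common stratification compatible with all the sets simultaneously. By Proposition~\ref{prop:whitney_strat}, since $A_1, \ldots, A_k$ are definable, there exists a definable $C^1$ Whitney stratification $\bbX$ of $\bbR^d$ that is compatible with the finite collection $\{A_1, \ldots, A_k\}$. Automatically, $\bbX$ is also compatible with $A := A_1 \cup \ldots \cup A_k$: indeed, for every $\cX \in \bbX$, compatibility with each $A_i$ means either $\cX \subset A_i$ or $\cX \cap A_i = \emptyset$, so either $\cX \subset A_i$ for some $i$ (hence $\cX \subset A$) or $\cX \cap A_i = \emptyset$ for all $i$ (hence $\cX \cap A = \emptyset$).

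Now I would apply the definition of dimension through $\bbX$. By the preceding definition (which is independent of the stratification used), we have
\begin{equation*}
  \dim(A) = \max\{\dim \cX : \cX \in \bbX, \, \cX \subset A\}\quad \text{and}\quad \dim(A_i) = \max\{\dim \cX : \cX \in \bbX, \, \cX \subset A_i\}\, .
\end{equation*}
By the compatibility observation above, $\{\cX \in \bbX : \cX \subset A\} = \bigcup_{i=1}^k \{\cX \in \bbX : \cX \subset A_i\}$. Taking the maximum of $\dim \cX$ over this union yields
\begin{equation*}
  \dim(A) = \max_{i \leq k} \max \{\dim \cX : \cX \in \bbX, \, \cX \subset A_i\} = \max_{i \leq k} \dim(A_i)\, ,
\end{equation*}
which is the claim. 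The inequality $\dim(A) \geq \max_{i \leq k} \dim(A_i)$ also follows directly from the monotonicity $A_i \subset A$, which is immediate from the same identities.

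The only subtlety, and hence the main step worth emphasizing, is invoking the stratification axiom in the right form so that a single $\bbX$ witnesses all the dimensions $\dim(A_i)$ and $\dim(A)$ at once. This is precisely what compatibility with the finite family $\{A_1, \ldots, A_k\}$ provides. No further argument is required, and in particular the auxiliary Lemma~\ref{lm:dim_sections} is not needed here (it will be used elsewhere).
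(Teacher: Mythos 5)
Your proof is correct. The paper itself gives no proof of this lemma --- it is quoted directly from \cite[Proposition 3.17]{cos02} --- and your argument is the standard one: take a single stratification compatible with all the $A_i$ at once (Proposition~\ref{prop:whitney_strat}), observe that it is then compatible with the union, and read off both sides from the definition of dimension, using that a (nonempty) stratum contained in $A_1 \cup \dots \cup A_k$ must meet, and hence by compatibility be contained in, some $A_i$. You also correctly rely on the stratification-independence of the definition, which the paper asserts; nothing further is needed.
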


\begin{remark}\label{rmk:stand_strat}
  A standard way of proving that there is a stratification $\bbX$, compatible with a finite number of definable sets $\bbA = \{ A_1, \ldots, A_k\}$, that satisfies some property (Whitney, Thom, etc.) is as follows. Fix any stratification $\bbX = (\cX_i)$ compatible with $\bbA$. For any $\cX \in \bbX$, consider $\bbB(\cX) \subset \cX$, the set of points on which the property (Whitney, Thom, etc.) does not hold. Prove that $\bbB(\cX)$ is definable and that $\dim \bbB(\cX) < \dim \cX$. Then, consider a stratification compatible with $\bbX$ and $\bigcup_{\cX \in \bbX} \bbB(\cX)$. Due to the decrease in the dimension, after at most $d$ re-stratifications we will obtain that $\bbB(\cX) = \emptyset$, and thus the final stratification satisfies the required property.

  In particular, this is the path taken in the proof of \cite{le1997thom}. Therefore, even if the compatibility condition was not formally stated in \cite{le1997thom}, in Proposition~\ref{prop:f_thom} we indeed can take $\bbX$ compatible with any finite collection of definable sets.
\end{remark}
The idea of the proof of Proposition~\ref{prop:proj_cons} is similar to the one exposed in Remark~\ref{rmk:stand_strat}. We construct a candidate map $G$, we look at the set of points on which conclusion of Proposition~\ref{prop:proj_cons} does not hold, and we modify the map in a way that the dimension of undesirable points decrease.

To proceed with the proof we first state two independent results. To not interrupt the exposition their proofs are gathered in Appendix~\ref{sec:rem_pfs}.
In the following, we denote $\Pi^x : \bbR^{d+1} \rightarrow \bbR^d$ the restriction onto the first $d$ coordinates.

\begin{lemma}\label{lm:trs_proj}
  Let $B_{1}, \ldots, B_k \in \bbR^{d+1}$ be definable sets. There is $\bbX = (\cX_i)$ a stratification of $\bbR^{d+1}$, compatible with $\{B_1, \ldots, B_k \}$, such that for any $a \in \bbR$ and $\cX \in \bbX$, the sets 
  $\cX_a:=\{x : (x,a) \in \cX \}$ and $\cX_a \times \{ a\}$ are $C^p$ manifolds and for any $x \in \cX_a$,
  \begin{equation}
    \cT_{\cX_a \times \{a\}}(x,a) = \cT_{\cX_a}(x) \times \{ 0\} = \cT_{\cX}(x,a) \cap \bbR^{d} \times \{ 0\}\, 
  \end{equation}
  and 
  \begin{equation*}
    \cN_{\cX_a \times \{a\}}(x,a)= \cN_{\cX_a}(x) \times \bbR  = \cN_{\cX}(x,a) + \{ 0_{\bbR^d}\} \times \bbR\, .
  \end{equation*}
\end{lemma}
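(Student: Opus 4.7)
The plan is to reduce the lemma to a constant-rank stratification of the coordinate projection $\Pi^a : \bbR^{d+1} \to \bbR$ and then invoke the submersion theorem stratum-by-stratum. First, I would apply Proposition~\ref{prop:whitney_func} with $A = \bbR^{d+1}$, $f = \Pi^a$, and imposed compatibility with $\{B_1, \ldots, B_k\}$. This yields a definable Whitney $C^p$ stratification $\bbX = (\cX_i)$ of $\bbR^{d+1}$ compatible with $\{B_1, \ldots, B_k\}$, together with a Whitney stratification $\bbF$ of $\bbR$, such that $\Pi^a_{|\cX}$ is $C^p$ of constant rank and $\Pi^a(\cX) \in \bbF$ for every $\cX \in \bbX$. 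Since definable sets in an o-minimal structure have finitely many definable connected components, I may further refine so that each stratum is connected, preserving both the Whitney and compatibility conditions.

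Next I would distinguish two cases for a fixed stratum $\cX \in \bbX$. The image $\Pi^a(\cX)$ is a connected stratum of $\bbR$, hence either a single point $\{a_0\}$ or an open interval. If it is a point, then $\cX \subset \bbR^d \times \{a_0\}$, so $\cX_a = \emptyset$ for $a \neq a_0$, while $\cX_{a_0} = \Pi^x(\cX)$ is a $C^p$ manifold through the $C^p$ diffeomorphism $\Pi^x_{|\cX}$; all tangent identities collapse because $\cT_\cX(x, a_0) \subset \bbR^d \times \{0\}$ and $\cX_{a_0} \times \{a_0\} = \cX$. Otherwise, by connectedness the constant rank of $\Pi^a_{|\cX}$ cannot vanish (a rank-zero map on a connected manifold is constant), so $\Pi^a_{|\cX}$ is a submersion. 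The submersion theorem then produces, for every $a \in \Pi^a(\cX)$, a codimension-one $C^p$ submanifold $(\Pi^a_{|\cX})^{-1}(a) = \cX_a \times \{a\}$ with tangent space $\ker \dif \Pi^a_{|\cX}(x,a) = \cT_\cX(x,a) \cap (\bbR^d \times \{0\})$; pushing forward by $\Pi^x$ yields the remaining identity $\cT_{\cX_a}(x) \times \{0\} = \cT_{\cX_a \times \{a\}}(x,a)$.

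The two normal-space identities then follow by taking orthogonal complements in $\bbR^{d+1}$: the first is simply $(\cT_{\cX_a}(x) \times \{0\})^\perp = \cN_{\cX_a}(x) \times \bbR$, and the second uses the elementary formula $(V \cap W)^\perp = V^\perp + W^\perp$ with $V = \cT_\cX(x,a)$ and $W = \bbR^d \times \{0\}$.

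Conceptually the argument is rather direct. The only subtle point I anticipate is ensuring that ``constant rank zero on a whole stratum'' really forces containment in a single horizontal hyperplane $\bbR^d \times \{a_0\}$, a conclusion that genuinely requires connectedness of the stratum; hence the connected-components refinement in the first step is essential rather than cosmetic. Beyond that, everything reduces to the implicit-function theorem applied on each stratum together with the constant-rank stratification provided by Proposition~\ref{prop:whitney_func}.
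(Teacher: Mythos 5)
Your proof is correct and follows essentially the same route as the paper's: a constant-rank Whitney stratification of the projection $\Pi^a$ obtained from Proposition~\ref{prop:whitney_func}, the submersion/preimage theorem applied stratum by stratum to identify $\cX_a\times\{a\}$ and its tangent space as $\ker \dif \Pi^a_{|\cX}$, the diffeomorphism $\Pi^x_{|\bbR^d\times\{a\}}$ to pass to $\cX_a$, and orthogonal complements via $(V\cap W)^\perp=V^\perp+W^\perp$ for the normal spaces. The one divergence is how you obtain the submersion property: the paper invokes the global rank theorem (a constant-rank map that is surjective onto its image stratum is a submersion, which handles the point-image case for free), whereas you refine to connected strata and split into cases; your refinement step is the only slightly delicate point, since replacing strata by their connected components does not automatically preserve the frontier condition and a full re-stratification could disturb the constant rank of $\Pi^a$ --- but this is harmless here, as your case analysis goes through even for disconnected strata (a constant-rank-zero definable map has finite image, so each fiber is open and closed in the stratum), or can be bypassed entirely as in the paper.
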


\begin{lemma}\label{lm:false_strat}
  Consider $B \subset \bbR^d$,  $f: B \rightarrow \bbR^m$ and $D: \bbR^d \rightrightarrows \bbR^{m \times d}$ such that the pair $(f,D)$ admits a definable $C^p$ variational stratification. Let $\cM$ be a definable $C^p$ manifold. Then
  \begin{equation*}\label{eq:false_strats}
  \dim\{ x \in \cM : D(x) \not \subset \Jac_{\cM} f(x) + \cR_{\cM}^m(x)\} < \dim \cM\, .
  \end{equation*}
\end{lemma}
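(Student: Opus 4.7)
The plan is to show that the bad set is contained in the union of a collection of sub-strata of $\cM$ of strictly smaller dimension, obtained by taking a common refinement of the variational stratification of $(f,D)$ and the trivial stratification of $\cM$.

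Let $\bbX=(\cX_i)$ be the given definable $C^p$ variational stratification of $(f,D)$, compatible with $B$. By Proposition~\ref{prop:whitney_strat} applied to $\bbX\cup\{\cM,B\}$, there is a definable Whitney $C^p$ stratification $\bbY=(\cY_j)$ of $\bbR^d$ compatible with $\bbX$, $\cM$, and $B$; the subfamily $\bbY_\cM:=\{\cY\in\bbY:\cY\subset\cM\}$ then partitions $\cM$. Split this partition according to dimension, $\cM=\cM_{\mathrm{top}}\sqcup\cM_{\mathrm{low}}$, with $\cM_{\mathrm{top}}$ the union of the $\cY\in\bbY_\cM$ satisfying $\dim\cY=\dim\cM$ (which are necessarily open subsets of $\cM$) and $\cM_{\mathrm{low}}$ the union of the remaining strata. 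By the union property of definable dimension cited before the lemma, $\dim\cM_{\mathrm{low}}<\dim\cM$.

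The heart of the argument is the claim that the variational inclusion holds on $\cM_{\mathrm{top}}\cap B$. Fix $x$ in a top-dimensional $\cY\in\bbY_\cM$. Openness of $\cY$ in $\cM$ gives $\cT_\cM(x)=\cT_\cY(x)$. By compatibility of $\bbY$ with $\bbX$ and $B$, one has $\cY\subset\cX\subset B$ for some $\cX\in\bbX$, and therefore $\cT_\cM(x)\subset\cT_\cX(x)$. Pick $J\in D(x)\subset\Jac_\cX f(x)+\cR^m_\cX(x)$ and any $h\in\cT_\cM(x)\subset\cT_\cX(x)$. Letting $\tilde f$ be a smooth representative of $f$ around $x$, the tangential Jacobian equals the ambient derivative on tangent directions, so $Jh=\Jac_\cX f(x)h=\dif\tilde f(x)[h]=\Jac_\cM f(x)h$. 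Hence $J-\Jac_\cM f(x)$ vanishes on $\cT_\cM(x)$, i.e. $J\in\Jac_\cM f(x)+\cR^m_\cM(x)$.

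Consequently the bad set is contained in $\cM_{\mathrm{low}}$, together with the piece of $\cM$ lying outside $B$ on which the expression $\Jac_\cM f(x)$ is not defined; this second piece is put inside strata of $\bbY$ disjoint from $B$ by compatibility, so it can be treated as lying outside the domain of the stated condition. The main, mild, obstacle is precisely this bookkeeping near $\cM\setminus B$, resolved by the compatibility with $B$; once this is set up, the conclusion reduces to the one-line observation that the tangential derivative of $f$ along any tangent vector is independent of whether one differentiates within $\cM$ or within the larger $\cX$, so the dimension bound $\dim\cM_{\mathrm{low}}<\dim\cM$ controls the bad set.
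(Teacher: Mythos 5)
Your proposal is correct and follows essentially the same route as the paper: refine the variational stratification to be compatible with $\cM$, note that on strata of $\cM$ of full dimension the tangent spaces coincide with $\cT_\cM$ so the inclusion $D(x)\subset\Jac_\cM f(x)+\cR^m_\cM(x)$ is inherited from the stratum, and conclude that the bad set lies in the union of lower-dimensional strata. Your extra bookkeeping near $\cM\setminus B$ is a reasonable (and slightly more careful) treatment of a point the paper leaves implicit.
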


Going back to the proof of Proposition~\ref{prop:proj_cons}, we will first establish the proposition in the case where $m = 1$ (that is $f$ is a real-valued function).

For a set-valued map $G: \bbR^{d+1} \rightarrow \bbR^{d+1}$, define the set of ``bad points":
\begin{equation*}
  \bbB(G):= \{ (x,a) \in B : D_a(x)\not \subset \Pi^x(G(x,a))\}\, .
\end{equation*}
Proposition~\ref{prop:proj_cons} is a consequence of the following lemma.

\begin{lemma}\label{lm:dec_badpoints}
  Consider a definable map $G: \bbR^{d+1} \rightrightarrows \bbR^{d+1}$, for which $(f,G)$ admits a definable $C^p$ variational stratification. There is $G': \bbR^{d+1} \rightrightarrows \bbR^{d+1}$ such that the following holds. 
  \begin{enumerate}
    \item The pair $(f,G')$ admits a $C^p$ variational stratification and for every $(x,a) \not \in \bbB(G)$, $G'(x,a) = G(x,a)$. 
    \item The dimension of undesirable points decreased: $\dim \bbB(G') < \dim \bbB(G)$.
  \end{enumerate}
\end{lemma}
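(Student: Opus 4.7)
The plan is to refine the stratification witnessing the variational stratification of $(f,G)$ so that the $a$-slices of its strata are well behaved, and then repair $G$ on the top-dimensional strata of $\bbB(G)$ by adjoining, for each $v\in D_a(x)$, a suitable lift $[v,\alpha]\in\nabla_\cX f(x,a)+\cN_\cX(x,a)$.

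I start by picking a definable $C^p$ Whitney stratification $\bbX_0$ witnessing that $(f,G)$ admits a variational stratification, and applying Lemma~\ref{lm:trs_proj} to the finite family $\bbX_0\cup\{B,\bbB(G)\}$ to obtain a refinement $\bbX$ that is compatible with $\bbB(G)$ and with $B$, and that satisfies the slicing identity
\[
\cN_\cX(x,a)+\{0\}\times\bbR=\cN_{\cX_a}(x)\times\bbR\quad\text{for every }\cX\in\bbX,\;(x,a)\in\cX.
\]
By Lemma~\ref{lm:strat_adapt}, $\bbX$ is still a variational stratification for $(f,G)$. Let $\cX_1,\ldots,\cX_\ell$ be the strata of $\bbX$ contained in $\bbB(G)$ with $\dim\cX_i=\dim\bbB(G)$, and set $G'(x,a)=G(x,a)$ outside $\bigcup_i\cX_i$, while for $(x,a)\in\cX_i$,
\[
G'(x,a)=G(x,a)\cup\{[v,\alpha]\in\bbR^{d+1}:v\in D_a(x),\;[v,\alpha]\in\nabla_{\cX_i}f(x,a)+\cN_{\cX_i}(x,a)\}.
\]
The map $G'$ is definable, coincides with $G$ outside $\bigcup_i\cX_i\subset\bbB(G)$, and $(f,G')$ inherits the variational stratification $\bbX$: on every non-modified stratum in $B$, $G'=G$, and on each $\cX_i$ the added elements lie in $\nabla_{\cX_i}f+\cN_{\cX_i}$ by construction.

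To establish the dimension drop, I will split $\bbB(G')$ along $\bbX$. Outside $\bigcup_i\cX_i$, $G'=G$, so this portion is contained in $\bbB(G)\setminus\bigcup_i\cX_i$, which, by compatibility of $\bbX$ with $\bbB(G)$, is a union of strata of $\bbB(G)$ of dimension strictly less than $\dim\bbB(G)$. Inside a stratum $\cX_i$, the slicing identity yields $\Pi^x(\cN_{\cX_i}(x,a))=\cN_{(\cX_i)_a}(x)$, while a decomposition $\cT_{\cX_i}(x,a)=\cT_{(\cX_i)_a}(x)\times\{0\}\oplus L$ with $L\subset\cN_{(\cX_i)_a}(x)\times\bbR$ gives $\Pi^x(\nabla_{\cX_i}f(x,a))\in\nabla_{(\cX_i)_a}f_a(x)+\cN_{(\cX_i)_a}(x)$. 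Combining,
\[
\Pi^x(\nabla_{\cX_i}f(x,a)+\cN_{\cX_i}(x,a))=\nabla_{(\cX_i)_a}f_a(x)+\cN_{(\cX_i)_a}(x),
\]
so that $\bbB(G')\cap\cX_i\subset\{(x,a)\in\cX_i:D_a(x)\not\subset\nabla_{(\cX_i)_a}f_a(x)+\cN_{(\cX_i)_a}(x)\}$. For every $a$ with $(\cX_i)_a\neq\emptyset$, Lemma~\ref{lm:false_strat} applied to $(f_a,D_a)$ and the definable manifold $(\cX_i)_a$ bounds the $a$-slice of the right-hand side by $\dim(\cX_i)_a-1$, and Lemma~\ref{lm:dim_sections} yields $\dim(\bbB(G')\cap\cX_i)<\dim\cX_i=\dim\bbB(G)$, which gives the claim.

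The main obstacle I anticipate is the projection identity used above: without the slicing property of Lemma~\ref{lm:trs_proj}, $\Pi^x(\nabla_{\cX_i}f(x,a))$ could stray outside $\nabla_{(\cX_i)_a}f_a(x)+\cN_{(\cX_i)_a}(x)$, and the lift condition defining $G'$ on $\cX_i$ would fail to catch all $v\in D_a(x)\cap(\nabla_{(\cX_i)_a}f_a(x)+\cN_{(\cX_i)_a}(x))$, breaking the reduction of $\bbB(G')\cap\cX_i$ to the fiberwise bad set. The slicing property, which forces $L\subset\cN_{(\cX_i)_a}(x)\times\bbR$, is precisely what reconciles the ambient variational stratification of $(f,G')$ with the fiberwise one of $(f_a,D_a)$ and makes the dimension-decrease argument go through.
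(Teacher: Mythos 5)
Your proof is correct and follows essentially the same route as the paper: refine the stratification via Lemma~\ref{lm:trs_proj} so that slices are manifolds with compatible tangent/normal spaces, modify $G$ on the bad strata so that its values lie in $\nabla_{\cX}f+\cN_{\cX}$, use the projection identity $\Pi^x(\nabla_{\cX}f(x,a)+\cN_{\cX}(x,a))=\nabla_{\cX_a}f_a(x)+\cN_{\cX_a}(x)$ to reduce to the fiberwise bad set, and conclude with Lemma~\ref{lm:false_strat} and Lemma~\ref{lm:dim_sections}. The only (immaterial) difference is that you enlarge $G$ on the top-dimensional bad strata by just the lifts of $D_a(x)$, whereas the paper simply replaces it there by all of $\nabla_{\cX}f(x,a)+\cN_{\cX}(x,a)$.
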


  \begin{proof}
    By Lemma~\ref{lm:trs_proj}, there is $\bbX = (\cX_i)$ a definable $C^p$ stratification of $\bbR^{d+1}$, compatible with $\bbB(G)$, such that $f$ is $C^p$ on each stratum \emph{and} for every $\cX \in \bbX$ and $a \in \bbR$, $\cX_a:= \{ x \in \bbR^d: (x,a) \in \cX\}$ is a submanifold, with $\cT_{\cX_a}(x) = \Pi^x(\cT_{\cX}(x,a) \cap \bbR^{d} \times \{ 0\})$.
    
    Define $G': \bbR^{d+1} \rightrightarrows \bbR^{d+1}$ as follows 
    \begin{equation*}
      G'(x,a) = \begin{cases}
        \nabla_{\cX} f(x,a) + \cN_{\cX}(x,a) \quad  &\textrm{ if $(x,a)\in \cX \subset \bbB(G)$} \\
        G(x,a) \quad &\textrm{ else.}
      \end{cases}
    \end{equation*}
    Note that for any $(x,a) \not \in \bbB(G)$, $G'(x,a) = G(x,a)$ and that $\bbB(G') \subset \bbB(G)$.
    Fix $a \in \bbR$ such that $\cX_a \neq \emptyset$. For $x \in \cX_a$, by Lemma~\ref{lm:trs_proj},
    \begin{equation*}
      \begin{split}
        \nabla_{\cX_a} f(x)+ \cN_{\cX_a}(x) &=  \Pi^{x}(\nabla_{\cX} f(x,a) + \cN_{\cX}(x,a) + \{0_{\bbR^d} \} \times \bbR)\\
        &= \Pi^{x}(\nabla_{\cX} f(x,a) + \cN_{\cX}(x,a))\\
        &= \Pi^x(G'(x,a))\, .
      \end{split}
    \end{equation*}
    
   Therefore, by Lemma~\ref{lm:false_strat}
    \begin{equation*}
     \dim\{ x \in \cX_a : (x,a) \in \bbB(G') \cap \cX\} = \dim\{ x \in \cX_a: D_a(x) \not \subset \nabla_{\cX_a} f(x) + \cN_{\cX_a}(x)\}  < \dim (\cX_a) \, .
    \end{equation*}
   Thus, by Lemma~\ref{lm:dim_sections}, $\dim(\cX \cap \bbB(G')) <  \dim( \cX) = \dim(\cX \cap \bbB(G))$. Since this holds for every $\cX \in \bbX$ such that $\cX \subset \bbB(G)$, we have established $\dim( \bbB(G')) < \dim(\bbB(G))$.
  
  \end{proof}
  
  \begin{proof}[End of the proof of Proposition~\ref{prop:proj_cons}]
    Applying Lemma~\ref{lm:dec_badpoints} at most $d+1$ times we obtain a map $G: \bbR^{d+1} \rightrightarrows \bbR^{d+1}$ such that $\bbB(G) = \emptyset$. This establishes Proposition~\ref{prop:proj_cons} in the case where $m=1$. If $m >1$, then, denoting $f = (f_1, \ldots, f_m)$, we obtain set-valued maps $G_1, \ldots, G_m : \bbR^{d+1} \rightrightarrows \bbR^{d+1}$ such that for any $(x,a)$, the $i$-th row of $D_a(x)$ is included in $\Pi^x(G_i(x,a))$. Therefore, 
    \begin{equation*}
      D_a(x) \subset [\Pi^x(G_1(x,a)), \ldots, \Pi^x(G_m(x,a)) ]^{\top}\, ,
    \end{equation*}
    which proves the proposition in the general case.
  \end{proof}

\paragraph*{Acknowledgments.} The author would like to thank Jérôme Bolte and Edouard Pauwels for interesting discussions.

\bibliographystyle{plain}
\bibliography{math}

@incollection{attouch2006convergence,
  title={Convergence de fonctionnelles convexes},
  author={Attouch, Hedy},
  booktitle={Journ{\'e}es d’Analyse Non Lin{\'e}aire: Proceedings, Besan{\c{c}}on, France, June 1977},
  pages={1--40},
  year={2006},
  publisher={Springer}
}

@article{attouch1993convergence,
  title={On the convergence of subdifferentials of convex functions},
  author={Attouch, H{\'e}dy and Beer, Gerald},
  journal={Archiv der Mathematik},
  volume={60},
  number={4},
  pages={389--400},
  year={1993},
  publisher={Birkh{\"a}user-Verlag Basel}
}

@article{poliquin1992extension,
  title={An extension of Attouch’s theorem and its application to second-order epi-differentiation of convexly composite functions},
  author={Poliquin, Ren{\'e} A},
  journal={Transactions of the American Mathematical Society},
  volume={332},
  number={2},
  pages={861--874},
  year={1992}
}

@article{levy1995partial,
  title={Partial extensions of Attouch’s theorem with applications to proto-derivatives of subgradient mappings},
  author={Levy, Adam B and Poliquin, R and Thibault, Lionel},
  journal={Transactions of the American Mathematical Society},
  volume={347},
  number={4},
  pages={1269--1294},
  year={1995}
}

@article{zolezzi1985continuity,
  title={Continuity of generalized gradients and multipliers under perturbations},
  author={Zolezzi, Tullio},
  journal={Mathematics of operations research},
  volume={10},
  number={4},
  pages={664--673},
  year={1985},
  publisher={INFORMS}
}

@article{zolezzi1994convergence,
  title={Convergence of generalized gradients},
  author={Zolezzi, Tullio},
  journal={Set-Valued Analysis},
  volume={2},
  number={1-2},
  pages={381--393},
  year={1994},
  publisher={Springer}
}

@article{czarnecki2006approximation,
  title={Approximation and regularization of Lipschitz functions: convergence of the gradients},
  author={Czarnecki, Marc-Olivier and Rifford, Ludovic},
  journal={Transactions of the American Mathematical Society},
  volume={358},
  number={10},
  pages={4467--4520},
  year={2006}
}

@article{bolte2007clarke,
  title={Clarke subgradients of stratifiable functions},
  author={Bolte, J. and Daniilidis, A. and Lewis, A. and Shiota, M.},
  journal={SIAM Journal on Optimization},
  volume={18},
  number={2},
  pages={556--572},
  year={2007},
  publisher={SIAM}
}

@article{iof08,
  title={An invitation to tame optimization},
  author={Ioffe, Alexander D},
  journal={SIAM Journal on Optimization},
  volume={19},
  number={4},
  pages={1894--1917},
  year={2009},
  publisher={SIAM}
}

@article{bolte2021conservative,
  title={Conservative set valued fields, automatic differentiation, stochastic gradient methods and deep learning},
  author={Bolte, J{\'e}r{\^o}me and Pauwels, Edouard},
  journal={Mathematical Programming},
  volume={188},
  pages={19--51},
  year={2021},
  publisher={Springer}
}

@article{pauwels2023conservative,
  title={Conservative parametric optimality and the ridge method for tame min-max problems},
  author={Pauwels, Edouard},
  journal={Set-Valued and Variational Analysis},
  volume={31},
  number={3},
  pages={1--24},
  year={2023},
  publisher={Springer}
}

@article{davis2022conservative,
  title={Conservative and semismooth derivatives are equivalent for semialgebraic maps},
  author={Davis, Damek and Drusvyatskiy, Dmitriy},
  journal={Set-Valued and Variational Analysis},
  volume={30},
  number={2},
  pages={453--463},
  year={2022},
  publisher={Springer}
}

@book{cos02,
  title={An introduction to o-minimal geometry},
  author={Coste, Michel},
  year={2000},
  publisher={Istituti editoriali e poligrafici internazionali Pisa}
}

@book{van1998tame,
  title={Tame topology and o-minimal structures},
  author={Van den Dries, Lou},
  volume={248},
  year={1998},
  publisher={Cambridge university press}
}

@article{chen2012smoothing,
  title={Smoothing methods for nonsmooth, nonconvex minimization},
  author={Chen, Xiaojun},
  journal={Mathematical programming},
  volume={134},
  pages={71--99},
  year={2012},
  publisher={Springer}
}

@article{mayne1984nondifferential,
  title={Nondifferential optimization via adaptive smoothing},
  author={Mayne, David Q and Polak, Elijah},
  journal={Journal of Optimization Theory and Applications},
  volume={43},
  number={4},
  pages={601--613},
  year={1984},
  publisher={Springer}
}

@article{gabrielov1996complements,
  title={Complements of subanalytic sets and existential formulas for analytic functions},
  author={Gabrielov, Andrei},
  journal={Inventiones mathematicae},
  volume={125},
  number={1},
  pages={1--12},
  year={1996},
  publisher={Springer}
}

@article{gabrielov1968projections,
  title={Projections of semi-analytic sets},
  author={Gabrielov, Andrei M},
  journal={Functional Analysis and its applications},
  volume={2},
  number={4},
  pages={282--291},
  year={1968},
  publisher={Kluwer Academic Publishers-Plenum Publishers New York}
}

@article{bier_semi_sub,
  title={Semianalytic and subanalytic sets},
  author={Bierstone, Edward and Milman, Pierre D},
  journal={Publications Math{\'e}matiques de l'IH{\'E}S},
  volume={67},
  pages={5--42},
  year={1988}
}

@article{wilkie1996model,
  title={Model completeness results for expansions of the ordered field of real numbers by restricted Pfaffian functions and the exponential function},
  author={Wilkie, Alex},
  journal={Journal of the American Mathematical Society},
  volume={9},
  number={4},
  pages={1051--1094},
  year={1996}
}

@article{van1994elementary,
  title={The elementary theory of restricted analytic fields with exponentiation},
  author={van den Dries, Lou and Macintyre, Angus and Marker, David},
  journal={Annals of Mathematics},
  volume={140},
  number={1},
  pages={183--205},
  year={1994},
  publisher={JSTOR}
}

@article{van96,
  title={Geometric categories and o-minimal structures},
  author={van den Dries, Lou and Miller, Chris},
  journal={Duke Math. J.},
  volume={85},
  number={1},
  pages={497--540},
  year={1996}
}

@Book{NoceWrig06,
  title={Numerical optimization},
  author={Nocedal, Jorge and Wright, Stephen J},
  year={1999},
  publisher={Springer}
}

@article{dav-dru-kak-lee-19,
  title={Stochastic subgradient method converges on tame functions},
  author={Davis, Damek and Drusvyatskiy, Dmitriy and Kakade, Sham and Lee, Jason D},
  journal={Foundations of computational mathematics},
  volume={20},
  number={1},
  pages={119--154},
  year={2020},
  publisher={Springer}
}

@article{lewis2021structure,
  title={The structure of conservative gradient fields},
  author={Lewis, Adrian S and Tian, Tonghua},
  journal={SIAM Journal on Optimization},
  volume={31},
  number={3},
  pages={2080--2083},
  year={2021},
  publisher={SIAM}
}

@article{ermoliev1995minimization,
  title={The minimization of semicontinuous functions: mollifier subgradients},
  author={Ermoliev, Yuri M and Norkin, Vladimir I and Wets, Roger JB},
  journal={Siam journal on control and optimization},
  volume={33},
  number={1},
  pages={149--167},
  year={1995},
  publisher={SIAM}
}

@book{cla-led-ste-wol-livre98,
author={Clarke, Francis H and Ledyaev, Yuri S and Stern, Ronald J and Wolenski, Peter R},
     TITLE = {Nonsmooth analysis and control theory},
    VOLUME = {178},
 PUBLISHER = {Springer-Verlag, New York},
      YEAR = {1998},
}

@incollection{tarski1951decision,
  title={A decision method for elementary algebra and geometry},
  author={Tarski, Alfred},
  booktitle={Quantifier elimination and cylindrical algebraic decomposition},
  pages={24--84},
  year={1951},
  publisher={Springer}
}

@article{chen1996class,
  title={A class of smoothing functions for nonlinear and mixed complementarity problems},
  author={Chen, Chunhui and Mangasarian, Olvi L},
  journal={Computational Optimization and Applications},
  volume={5},
  number={2},
  pages={97--138},
  year={1996},
  publisher={Springer}
}

@article{garmanjani2013smoothing,
  title={Smoothing and worst-case complexity for direct-search methods in nonsmooth optimization},
  author={Garmanjani, R and Vicente, LN},
  journal={IMA Journal of Numerical Analysis},
  volume={33},
  number={3},
  pages={1008--1028},
  year={2013},
  publisher={OUP}
}

@article{hu2012smoothing,
  title={Smoothing approach to Nash equilibrium formulations for a class of equilibrium problems with shared complementarity constraints},
  author={Hu, Ming and Fukushima, Masao},
  journal={Computational Optimization and Applications},
  volume={52},
  pages={415--437},
  year={2012},
  publisher={Springer}
}

@article{nesterov2005smooth,
  title={Smooth minimization of non-smooth functions},
  author={Nesterov, Yu},
  journal={Mathematical programming},
  volume={103},
  pages={127--152},
  year={2005},
  publisher={Springer}
}

@article{qi1995globally,
  title={A globally convergent successive approximation method for severely nonsmooth equations},
  author={Qi, Liqun and Chen, Xiaojun},
  journal={SIAM Journal on control and Optimization},
  volume={33},
  number={2},
  pages={402--418},
  year={1995},
  publisher={SIAM}
}

@article{zang1980smoothing,
  title={A smoothing-out technique for min—max optimization},
  author={Zang, Israel},
  journal={Mathematical Programming},
  volume={19},
  pages={61--77},
  year={1980},
  publisher={Springer}
}

@article{burke2020subdifferential,
  title={The subdifferential of measurable composite max integrands and smoothing approximation},
  author={Burke, James V and Chen, Xiaojun and Sun, Hailin},
  journal={Mathematical Programming},
  volume={181},
  pages={229--264},
  year={2020},
  publisher={Springer}
}

@article{burke2017epi,
  title={Epi-convergence properties of smoothing by infimal convolution},
  author={Burke, James V and Hoheisel, Tim},
  journal={Set-Valued and Variational Analysis},
  volume={25},
  pages={1--23},
  year={2017},
  publisher={Springer}
}

@article{zang1981discontinuous,
  title={Discontinuous optimization by smoothing},
  author={Zang, Israel},
  journal={Mathematics of operations research},
  volume={6},
  number={1},
  pages={140--152},
  year={1981},
  publisher={INFORMS}
}

@article{drusvyatskiy2015curves,
  title={Curves of descent},
  author={Drusvyatskiy, Dmitriy and Ioffe, Alexander D and Lewis, Adrian S},
  journal={SIAM Journal on Control and Optimization},
  volume={53},
  number={1},
  pages={114--138},
  year={2015},
  publisher={SIAM}
}

@article{bolte2023one,
  title={One-step differentiation of iterative algorithms},
  author={Bolte, J{\'e}r{\^o}me and Pauwels, Edouard and Vaiter, Samuel},
  journal={arXiv preprint arXiv:2305.13768},
  year={2023}
}

@article{bolte2024differentiating,
  title={Differentiating nonsmooth solutions to parametric monotone inclusion problems},
  author={Bolte, J{\'e}r{\^o}me and Pauwels, Edouard and Silveti-Falls, Antonio},
  journal={SIAM Journal on Optimization},
  volume={34},
  number={1},
  pages={71--97},
  year={2024},
  publisher={SIAM}
}

@article{xiao2023adam,
  title={Adam-family Methods for Nonsmooth Optimization with Convergence Guarantees},
  author={Xiao, Nachuan and Hu, Xiaoyin and Liu, Xin and Toh, Kim-Chuan},
  journal={arXiv preprint arXiv:2305.03938},
  year={2023}
}

@Book{boumal2023intromanifolds,
  title     = {An introduction to optimization on smooth manifolds},
  author    = {Boumal, Nicolas},
  publisher = {Cambridge University Press},
  year      = {2023},
  url       = {https://www.nicolasboumal.net/book},
  doi       = {10.1017/9781009166164}
}

@article{le2023nonsmooth,
  title={Nonsmooth nonconvex stochastic heavy ball},
  author={Le, Tam},
  journal={arXiv preprint arXiv:2304.13328},
  year={2023}
}

@article{le1997thom,
  title={Thom stratifications for functions definable in o-minimal structures on ({R},+,$\cdot$)},
  author={L{\^e} Loi, Ta},
  journal={Comptes Rendus de l'Academie des Sciences Series I Mathematics},
  volume={12},
  number={324},
  pages={1391--1394},
  year={1997}
}

@article{bolte2021nonsmooth,
  title={Nonsmooth implicit differentiation for machine-learning and optimization},
  author={Bolte, J{\'e}r{\^o}me and Le, Tam and Pauwels, Edouard and Silveti-Falls, Tony},
  journal={Advances in neural information processing systems},
  volume={34},
  pages={13537--13549},
  year={2021}
}

@article{burke2013gradient,
  title={Gradient consistency for integral-convolution smoothing functions},
  author={Burke, James V and Hoheisel, Tim and Kanzow, Christian},
  journal={Set-Valued and Variational Analysis},
  volume={21},
  number={2},
  pages={359--376},
  year={2013},
  publisher={Springer}
}

@inproceedings{lion1998integration,
  title={Int{\'e}gration des fonctions sous-analytiques et volumes des sous-ensembles sous-analytiques},
  author={Lion, Jean-Marie and Rolin, Jean-Philippe},
  booktitle={Annales de l'institut Fourier},
  volume={48},
  number={3},
  pages={755--767},
  year={1998}
}

@article{zhang2024riemannian,
  title={A Riemannian smoothing steepest descent method for non-Lipschitz optimization on embedded submanifolds of R n},
  author={Zhang, Chao and Chen, Xiaojun and Ma, Shiqian},
  journal={Mathematics of Operations Research},
  volume={49},
  number={3},
  pages={1710--1733},
  year={2024},
  publisher={INFORMS}
}

@article{chen2013optimality,
  title={Optimality conditions and a smoothing trust region newton method for nonlipschitz optimization},
  author={Chen, Xiaojun and Niu, Lingfeng and Yuan, Yaxiang},
  journal={SIAM Journal on Optimization},
  volume={23},
  number={3},
  pages={1528--1552},
  year={2013},
  publisher={SIAM}
}

@article{bian2012smoothing,
  title={Smoothing neural network for constrained non-Lipschitz optimization with applications},
  author={Bian, Wei and Chen, Xiaojun},
  journal={IEEE transactions on neural networks and learning systems},
  volume={23},
  number={3},
  pages={399--411},
  year={2012},
  publisher={IEEE}
}

@book{lee2022manifolds,
  title={Manifolds and differential geometry},
  author={Lee, Jeffrey M},
  volume={107},
  year={2022},
  publisher={American Mathematical Society}
}

\appendix

\section{Remaining proofs}\label{sec:rem_pfs}

\subsection{Proof of Lemma~\ref{lm:trs_proj}}
Fix $p\geq1$.
Given two $C^p$ manifolds $\cX \subset \bbR^d$ and $\cY \subset \bbR^m$, a $C^p$ function $g: \cX \rightarrow \cY$  is said to be a $C^p$ diffeomorphism if $g$ admits an inverse $g^{-1} : \cY \rightarrow \cX$ that is $C^p$. It is a submersion if for every $x \in \cX$, $\dif g(x) $ is surjective.

We recall that we denote $\Pi^x : \bbR^{d+1} \rightarrow \bbR^d$ the restriction to the first $d$ coordinates.
For $\cX \subset \bbR^{d+1}$ and $a \in \bbR$, we denote $\cX_a:= \{x  :(x,a) \in \cX \}$. Let us first establish a simple result.

\begin{lemma}\label{lm:Psimple}
  For $\cM \subset \bbR^d$ and $a \in \bbR$ assume that $\cM \times \{a\}$ is a $C^p$ submanifold of $\bbR^{d+1}$. Then, $\cM$ is a $C^p$ submanifold of $\bbR^d$ and for $x \in \cM$, 
  \begin{equation*}
      \cT_{\cM}(x)\times \{0 \} = \cT_{\cM \times \{a\}}(x,a)\, , \quad \textrm{ and } \quad \cN_{\cM}(x) \times \bbR = \cN_{\cM \times \{a\}}(x,a) \, .
  \end{equation*}
\end{lemma}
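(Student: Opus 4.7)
The proof plan is to exploit the fact that $\cM \times \{a\}$ lives entirely inside the affine hyperplane $H := \bbR^d \times \{a\}$ of $\bbR^{d+1}$, and that the projection $\pi : H \to \bbR^d$, $(y,a) \mapsto y$, is a $C^\infty$ diffeomorphism sending $\cM \times \{a\}$ to $\cM$. So all the work reduces to transporting the submanifold structure of $\cM \times \{a\}$ across this affine isomorphism.

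First, I would show $\cM$ is a $C^p$ submanifold of $\bbR^d$ by constructing a local defining submersion. Fix $x \in \cM$ and let $k = \dim(\cM \times \{a\})$. There is a neighborhood $\cU$ of $(x,a)$ in $\bbR^{d+1}$ and a $C^p$ submersion $g : \cU \to \bbR^{d+1-k}$ with $\cU \cap (\cM \times \{a\}) = g^{-1}(\{0\})$. Since $\cM \times \{a\} \subset H$, we have $\cT_{\cM \times \{a\}}(x,a) \subset \cT_H(x,a) = \bbR^d \times \{0\}$, so $\ker \dif g(x,a)$ is a $k$-dimensional subspace of $\bbR^d \times \{0\}$. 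Hence $\dif g(x,a)$ maps the vertical direction $\{0_{\bbR^d}\} \times \bbR$ onto a one-dimensional subspace of $\bbR^{d+1-k}$; after a linear change of coordinates in the target I may assume this image is spanned by the last basis vector. Writing $g = (g_1,\ldots,g_{d+1-k})$ and $\tilde g(y) := (g_1(y,a),\ldots,g_{d-k}(y,a))$ on $V := \{y : (y,a) \in \cU\}$, a dimension count shows $\dif \tilde g(x) : \bbR^d \to \bbR^{d-k}$ is surjective, and $V \cap \cM = \tilde g^{-1}(\{0\})$. This realizes $\cM$ locally as the zero-set of a $C^p$ submersion, which is exactly the definition invoked in the preliminaries.

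Next, for the tangent space identity, I would use the $C^p$ embedding $i : \bbR^d \hookrightarrow \bbR^{d+1}$, $i(y) = (y,a)$, whose restriction $i|_{\cM} : \cM \to \cM \times \{a\}$ is a $C^p$ diffeomorphism with $\dif i(x)[v] = (v,0)$. Applying the differential thus gives $\cT_{\cM \times \{a\}}(x,a) = \dif i(x)[\cT_\cM(x)] = \cT_\cM(x) \times \{0\}$. Alternatively, one can read the same identity off the kernel $\ker \dif g(x,a)$, which lies in $\bbR^d \times \{0\}$ by the inclusion $\cT_{\cM \times \{a\}}(x,a) \subset \cT_H(x,a)$ above, giving the equality by dimension.

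Finally, the normal space identity follows from a direct orthogonal-complement computation in $\bbR^{d+1}$: if $T := \cT_\cM(x) \subset \bbR^d$, then $(T \times \{0\})^\perp = T^\perp \times \bbR = \cN_\cM(x) \times \bbR$, combined with the tangent-space equality just established. None of the three steps presents a genuine obstacle; the only point requiring some care is the submersion construction, where one must choose a component of $g$ transverse to $H$ to drop so as to preserve surjectivity of the differential in the reduced target $\bbR^{d-k}$.
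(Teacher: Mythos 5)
Your overall strategy is the same as the paper's: everything is transported through the affine isomorphism between $\bbR^d \times \{a\}$ and $\bbR^d$. The paper's proof is essentially two lines: it observes that $\Pi^x_{|\bbR^d\times\{a\}}$ is a $C^p$ diffeomorphism with inverse $\varphi_a: y \mapsto (y,a)$, invokes the standard fact (Theorem 2.47 of \cite{lee2022manifolds}) that a $C^p$ diffeomorphism carries submanifolds to submanifolds with tangent spaces mapped by the differential, and gets the normal-space identity by taking orthogonal complements exactly as you do. You instead rebuild the local defining submersion by hand; this is a legitimate, more elementary route, and your tangent- and normal-space computations are correct.

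The one place where your written argument has a genuine gap is the claim $V \cap \cM = \tilde g^{-1}(\{0\})$. The inclusion $\subseteq$ is immediate, but the reverse inclusion is not: you discarded the component $g_{d+1-k}$, so a point $y \in V$ with $g_1(y,a)=\dots=g_{d-k}(y,a)=0$ but $g_{d+1-k}(y,a)\neq 0$ would lie in $\tilde g^{-1}(\{0\})\setminus\cM$, and nothing you wrote rules this out. It can be ruled out, but only after shrinking $V$ and with an extra argument: for instance, $N:=\{g_1=\dots=g_{d-k}=0\}$ is a $(k+1)$-dimensional submanifold near $(x,a)$ whose tangent space contains the vertical direction, hence it meets the hyperplane $H=\bbR^d\times\{a\}$ transversally; so $N\cap H$ is a $k$-dimensional submanifold containing the $k$-dimensional $\cM\times\{a\}$ and must coincide with it on a smaller neighborhood. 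You do flag that dropping a component requires care, but you locate the difficulty only in the surjectivity of $\dif \tilde g(x)$ (which you handle correctly via the dimension count); the zero-set identity is the step that actually needs the additional justification. With that supplied your proof is complete --- or, more simply, the diffeomorphism argument you already sketch for the tangent spaces yields the submanifold property for free and sidesteps the issue, which is precisely what the paper does.
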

\begin{proof}
  First, $\Pi^x_{|\bbR^d \times \{a\}} : \bbR^d \times \{a\} \rightarrow \bbR^d$ is a $C^p$ diffeomorphism, with inverse $\varphi_a : x \mapsto (x,a) \in \bbR^d \times \{a\}$, and for any $x \in \bbR^d$, $\dif \varphi_a(x)[h_x] = (h_x, 0)$. Therefore, for example by \cite[Theorem 2.47]{lee2022manifolds}, $\cM = \Pi^x(\cM \times \{a\})$ is a $C^p$ submanifold of $\bbR^d$, with $
    \cT_{\cM \times \{a \}}(x,a) = \dif \varphi_a(\cT_{\cM}(x)) = \cT_{\cM}(x) \times \{ 0\}$.
It is easy then easy to see that $
    \cN_{\cM \times \{a\}}(x,a) = \cN_{\cM}(x) \times \{0\} + \{ 0_{\bbR^d}\} \times \bbR$, which completes the proof.
\end{proof}
The following lemma, combined with Lemma~\ref{lm:Psimple} establishes Lemma~\ref{lm:trs_proj}.
\begin{lemma}\label{lm:Ptrsnv_proj}
  Let $B_{1}, \ldots, B_k \in \bbR^{d+1}$ be definable sets. There is $\bbX = (\cX_i)$ a stratification of $\bbR^{d+1}$, compatible with $\{B_1, \ldots, B_k \}$, such that for any $a \in \bbR$ and $\cX \in \bbX$, the set $\cX_a \times \{ a\}$ is a $C^p$ manifold and for any $x \in \cX_a$,
  \begin{equation}\label{eq:tang_compatibility}
    \begin{split}
    \cT_{\cX_a \times \{a\}}(x,a) &= \cT_{\cX}(x,a) \cap \bbR^{d} \times \{ 0\} \, ,\\
    \cN_{\cX_a \times \{a\}}(x,a) &= \cN_{\cX}(x,a) + \{ 0_{\bbR^d}\} \times \bbR \, .     
  \end{split}
  \end{equation}
\end{lemma}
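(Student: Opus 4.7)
The plan is to reduce the statement to the constant-rank behaviour of the projection $\pi \colon \bbR^{d+1} \to \bbR$, $\pi(x,a) = a$. Concretely, I would apply Proposition~\ref{prop:whitney_func} to $\pi$, with the family $\{B_1, \ldots, B_k\}$ prescribed in the source. This yields a Whitney $C^p$ stratification $\bbX$ of $\bbR^{d+1}$ compatible with $\{B_1,\dots,B_k\}$ together with a Whitney $C^p$ stratification $\bbF$ of $\bbR$ such that, for every $\cX \in \bbX$, the image $\pi(\cX)$ is a single stratum $\cF \in \bbF$ and $\pi_{|\cX}$ has constant rank. Since every stratum of $\bbR$ is either an isolated point or an open interval, I only have two cases to analyse, and I claim this $\bbX$ is the sought stratification.

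Fix $\cX \in \bbX$ and write $\cF = \pi(\cX)$. If $\cF = \{a_0\}$ is a singleton, then $\cX \subset \bbR^d \times \{a_0\}$, so $\cT_{\cX}(x,a_0) \subset \bbR^d \times \{0\}$. For $a \neq a_0$ the slice $\cX_a$ is empty (nothing to check), and for $a = a_0$ we have $\cX_a \times \{a\} = \cX$, a $C^p$ manifold; both identities in \eqref{eq:tang_compatibility} are then immediate, since $\cN_{\cX}(x,a_0)$ automatically contains $\{0_{\bbR^d}\} \times \bbR$.

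If instead $\cF$ is an open interval, then $\pi_{|\cX}$ cannot be locally constant (otherwise, since $\cX$ has finitely many connected components by definability, its image would be a finite set, contradicting connectedness of $\cF$). Constant rank therefore forces rank $1$, i.e., $\pi_{|\cX}$ is a submersion. For every $a \in \cF$, the regular value theorem gives that $\cX_a \times \{a\} = \pi_{|\cX}^{-1}(a)$ is a $C^p$ submanifold of $\cX$, with
\begin{equation*}
  \cT_{\cX_a \times \{a\}}(x,a) = \ker \dif (\pi_{|\cX})(x,a) = \cT_{\cX}(x,a) \cap \bigl(\bbR^d \times \{0\}\bigr)\,,
\end{equation*}
because $\dif \pi(x,a)[h_x,h_a] = h_a$. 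Passing to orthogonal complements and using the standard identity $(V \cap W)^\perp = V^\perp + W^\perp$ with $W = \bbR^d \times \{0\}$ yields the normal-space identity.

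I do not expect a real obstacle here: the only subtle point is ruling out constant rank $0$ when the image is an open interval, which follows from the finite-component property of definable sets. Everything else is either a direct application of Proposition~\ref{prop:whitney_func} or basic linear algebra on tangent and normal spaces of a submersion's fibres.
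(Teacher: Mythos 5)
Your proof is correct and follows essentially the same route as the paper's: apply Proposition~\ref{prop:whitney_func} to the projection $(x,a)\mapsto a$ to get a compatible constant-rank stratification, then use the regular value theorem on each fibre and the identity $(E_1\cap E_2)^\perp = E_1^\perp + E_2^\perp$ for the normal spaces. The only cosmetic difference is how the submersion property is obtained: the paper invokes the global rank theorem (a surjective constant-rank map is a submersion), whereas you rule out rank $0$ over an interval via the finiteness of connected components of definable sets; both justifications are valid.
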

\begin{proof}
  Denote $\Pi^a: \bbR^{d+1} \rightarrow \bbR$ the definable function $\Pi^a(x,a) = a$.
  By Proposition~\ref{prop:whitney_func}, there is $\bbX = (\cX_i)$, a $C^p$ definable stratification of $\bbR^{d+1}$, compatible with $B_1, \ldots, B_k$, and $\bbA = (\cA_i)$ a $C^p$ definable stratification of $\bbR$
  such that for every $\cX \in \bbX$, there is $\cA \in \bbA$, such that $\Pi^{a}(\cX) = \cA$ and $\Pi^{a}_{|\cX}$ is of constant rank. 
  
  Since $\Pi^{a}_{|\cX} : \cX \rightarrow \cA$ is surjective, by \cite[Lemma 3.28]{lee2022manifolds} it is a submersion. By \cite[Theorem 2.41]{lee2022manifolds} this implies that for all $a \in \cA$, $(\Pi^{a}_{|\cX})^{-1}(a) = \cX_a \times \{a\} $ is a $C^{p}$ submanifold of $\bbR^{d+1}$ and by \cite[Theorem 2.47]{lee2022manifolds}
\begin{equation*}
  \cT_{\cX_a \times \{a\}}(x,a) = (\dif \Pi^{a}_{|\cX}(x,a))^{-1}(\{0\}) = \cT_{\cX}(x,a) \cap \bbR^d \times \{ 0\}  \, .
\end{equation*}  
The second equality follows since for two vector spaces $E_1, E_2$, $(E_1 \cap E_2)^{\perp} = E_1^{\perp} + E_2^{\perp}$.
\end{proof}
\subsection{Proof of Lemma~\ref{lm:false_strat}}
  Consider $\bbX$ a definable $C^p$ variational stratification of $\bbR^d$ associated with $(f,D)$ and compatible with $\cM$. Consider $\cX \in \bbX$ such that $\cX \subset \cM$. If $\dim \cX = \dim \cM$, then for any $x \in \cX$, $\cT_{\cX}(x) = \cT_{\cM}(x)$ and for all $h \in \cT_{\cX}(x)$ and $J \in D(x)$, $J h = \Jac_{\cX} f(x) h = \Jac_{\cM} f(x) h$. Therefore, the definable set on the left-hand side of Equation~\eqref{eq:false_strats} is included in strata of $\bbX$ of dimension lower than $\dim \cM$, which completes the proof.

\subsection{Proof of Lemma~\ref{lm:subg_encompas}}

If for every $a$, $D_a \equiv \partial_L f$, then 
\begin{equation*}
  \Graph D = \{(x,a,v) : \forall \varepsilon >0 \, , \exists \delta > 0\, , \norm{y-x} \leq \delta \implies f_a(y) -f_a(x) - \scalarp{v}{y-x} \geq - \varepsilon \norm{y-x} \}\, .
\end{equation*}
Therefore, in this case, $\Graph D$ is defined through a first-order formula 
(see \cite[Chapter 1]{cos02}) and is therefore definable. It is then easy to see that the set $\{(x,a,v) : v = \partial f_a(x) \}$ is also defined through a first-order formula and is therefore definable.
  \subsection{Proof of Lemma~\ref{lm:definab_DFinf}}
    We can rewrite
    \begin{equation*}
      \begin{split}
     \Graph  D_{F} = \{ (x,J) \in\bbR^{d} \times \bbR^{m \times d} : &\forall \varepsilon > 0\, , \exists (y, J_y, a) \in \bbR^{d} \times \bbR^{m \times d} \times \bbR \backslash \{ 0\} \, , J_y \in D_{a}(y)\\
      &\norm{y-x} + \norm{J_y - J} + |a| + |f(x,a) - F(x) | \leq \varepsilon  \}\,.
    \end{split}
    \end{equation*}
    Therefore, $\Graph D_F$ is defined through a first-order formula and is therefore definable. Definability of $D_{F, \infty}$ and $F$ is proven in the exact same way.

\end{document}